\renewcommand{\a}{\alpha}
\newcommand{\eps}{\varepsilon}
\renewcommand{\l}{\lambda}
\newcommand{\g}{\gamma}
\renewcommand{\l}{\lambda}
\renewcommand{\O}{\Omega}
\newcommand{\DEQS}{\begin{eqnarray*}}
\newcommand{\EEQS}{\end{eqnarray*}}
\newcommand\del[1]{}
\newcommand\think[1]{}
\newcommand\new[1]{}
\newcommand\zus[1]{}
\def\R{{\mathbb R}}
\def\N{{\mathbb N}}
\def\C{{\mathbb C}}
\def\E{{\mathbb E}}
\def\P{{\mathbb P}}
\newcommand{\s}{^{\ast}}
\newcommand{\n}{\Vert}
\newcommand{\calF}{\mathcal F}
\newcommand{\calH}{\mathcal H}
\newcommand{\calL}{\mathcal L}
\newcommand{\calM}{\mathcal M}
\newcommand{\MA}{{\bf (A)}}
\newcommand{\MF}{{\bf (F)}}
\newcommand{\MG}{{\bf (G)}}
\newcommand{\Vpc}[1]{V^{#1,p}_{\rm c}}
\theoremstyle{plain}
\newtheorem{theorem}{Theorem}[section]
\theoremstyle{remark}
\newtheorem{remark}[theorem]{Remark}
\theoremstyle{plain}
\newtheorem{corollary}[theorem]{Corollary}
\newtheorem{lemma}[theorem]{Lemma}
\newtheorem{proposition}[theorem]{Proposition}
\newtheorem{definition}[theorem]{Definition}
\DeclareMathOperator*{\esssup}{ess\,sup}
\newcommand{\inv}[1]{\frac{1}{#1}}
\newcommand{\tinv}[1]{\tfrac{1}{#1}}
\newcommand{\maxsym}{\vee}
\newcommand{\minsym}{\wedge}
\newcounter{gr1}
\begin{document}
\title[Perturbations of SDEs in UMD Banach spaces.]
{A perturbation result for quasi-linear stochastic differential equations in UMD Banach spaces.}

\author{Sonja Cox}
\address{Delft Institute of Applied Mathematics\\
Delft University of Technology \\ P.O. Box 5031\\ 2600 GA Delft\\The Netherlands}
\email{S.G.Cox@tudelft.nl}

\author{Erika Hausenblas}
\address{Pepartmnent of Mathematics and Informationtechnology\\Montanuniversity Leoben\\Fr.\ Josefstr. 18\\8700 Leoben\\Austria}
\email{erika.hausenblas@unileoben.ac.at}

\begin{abstract}
We consider the effect of perturbations of $A$ on the solution to the following quasi-linear parabolic
stochastic differential equation set in a \textsc{umd} Banach space $X$:
\begin{equation}\label{SDE_abstract}\tag{SDE}
\left\{ \begin{aligned} dU(t) & = AU(t)\,dt +F(t,U(t))\,dt + G(t,U(t))\,dW_H(t), \quad t>0;\\
U(0)&=x_0. \end{aligned}\right.
\end{equation}
Here $A$ is the generator of an analytic $C_0$-semigroup on $X$, $G:[0,T]\times X\rightarrow \calL(H,X_{\theta_G}^{A})$ and $F:[0,T]\times
X\rightarrow X_{\theta_F}^{A}$ for some $\theta_G>-\inv{2}$, $\theta_F>  -\frac{3}{2}+\inv{\tau}$, where $\tau$ is the type of $X$. We assume $F$ and $G$ to satisfy certain global Lipschitz and linear growth conditions.\par
Let $A_0$ denote the perturbed operator and $U_0$ the solution to \eqref{SDE_abstract} with $A$ substituted by $A_0$. We provide estimates for $\n U - U_0 \n_{L^p(\Omega;C([0,T];X))}$ in terms of $D_{\delta}(A,A_0):=\n R(\lambda:A)-R(\lambda:A_0)\n_{\calL(X^{A}_{\delta-1},X)}$. Here $\delta\in [0,1]$ is assumed to satisfy $0\leq \delta < \min\{\tfrac{3}{2}-\tinv{\tau}+\theta_F,\, \tinv{2}-\tinv{p}+\theta_G \}$.\par
The work is inspired by the desire to prove convergence of space approximations of \eqref{SDE_abstract}. In this article we prove convergence rates for the case that $A$ is approximated by its Yosida approximation.
\vspace{0.3cm}
\\
{\bf Keywords: }{perturbations, stochastic differential equations, stochastic convolutions, 
stochastic partial differential equations, Yosida approximation}
\\
{\bf MSC2010: }
46N40, 35R60, 35A30, 60H15
\end{abstract}

\maketitle

\section{Introduction}\label{mnb}

In this article we consider the effect of perturbations of $A$ on the solution to the following stochastic differential equation set in a \textsc{umd} Banach space $X$:
\begin{equation}\label{SDE1}\tag{SDE}
\left\{ \begin{aligned} dU(t) & = AU(t)\,dt +F(t,U(t))\,dt + G(t,U(t))\,dW_H(t), \quad t>0;\\
U(0)&=x_0. \end{aligned}\right.
\end{equation}
Here $A$ is the generator of an analytic $C_0$-semigroup on $X$, $G:[0,T]\times
X\rightarrow \calL(H,X_{\theta_G}^{A})$ and $F:[0,T]\times X\rightarrow
X_{\theta_F}^{A}$ for some $\theta_G>-\inv{2}$, $\theta_F>
-\frac{3}{2}+\inv{\tau}$, where $\tau\in [1,2)$ is the type of $X$. We use $X_{\delta}^A$ to denote the fractional domain or extra\-po\-lation space corresponding to $A$. We assume $F$ and $G$ to satisfy
certain global Lipschitz and linear growth conditions, see Section \ref{ss:setting} below. The framework in which we consider \eqref{SDE1} is precisely the one for which existence and uniqueness of a solution has been proven in \cite{NVW08}. A typical example of a stochastic partial differential equation that fits into this framework is a one-dimensional parabolic stochastic partial differential equation driven by white noise.\par
The main motivation to study the effect of perturbations of $A$ on solutions to \eqref{SDE} is the desire to prove convergence of certain numerical schemes for approximations in the space dimension. In fact, in \cite{CoxHau:11b} we demonstrate how the perturbation result proven in this article can be used to obtain pathwise convergence of certain Galerkin and finite element methods for \eqref{SDE} in the case that $X$ is Hilbertian. Here we focus more on the theoretical aspects, and demonstrate how our perturbation result can be used to prove convergence of the solution processes if $A$ is replaced by its Yosida approximation.\par
With applications to numerical approximations in mind, we assume the perturbed equation to be set in a (possibly finite dimensional) closed subspace $X_0$ of $X$. We assume that there exists a bounded projection $P_0:X\rightarrow X_0$ such that $P_0(X)=X_0$. Let $i_{X_0}$ be the canonical embedding of $X_0$ in $X$ and let $A_0$ be a generator of an analytic $C_0$-semigroup $S_0$ on $X_0$. In the setting of numerical approximations, $A_0$ would be a suitable restriction of $A$ to the finite dimensional space $X_0$.\par
The perturbed equation we consider is the following stochastic differential equation:
\begin{equation}\label{SDE0_intro}\tag{SDE$_0$}
\left\{ \begin{aligned} dU^{(0)}(t) & =A_0U^{(0)}(t)\,dt +P_0F(t,U^{(0)}(t))\,dt \\
&\quad + P_0G(t,U^{(0)}(t))\,dW_H(t), \qquad\qquad  t>0;\\
U^{(0)}(0)& = P_0x_0. \end{aligned}\right.
\end{equation}
Our main result, Theorem \ref{t:app} below, states that if we have:
\begin{align}\label{defDAAintro}
D_{\delta}(A,A_0):=  \n R(\lambda_0:A)-i_{X_0} R(\lambda_0:A_0)P_0 \n_{\calL(X_{\delta-1}^{A},X)} < \infty,
\end{align}
for some $\delta\geq 0$ satisfying
$$0\leq \delta < \min\{\tfrac{3}{2}-\tinv{\tau}+\theta_F,\, \tinv{2}+\theta_G \},$$
and $x_0\in L^p(\Omega;\calF_0;X_{\delta}^{A})$ for $p\in (2,\infty)$ such that $\inv{p}\leq \inv{2}+\theta_G-\delta$, then there exists a solution to $\eqref{SDE0_intro}$ in $L^p(\Omega;C([0,T];X_0))$ and moreover:
\begin{equation*}
\begin{aligned}
\n U - i_{X_0}U^{(0)} \n_{L^p(\Omega;C([0,T];X))}&
\lesssim D_{\delta}(A,A_0)(1+\n x_0
\n_{L^p(\Omega;X_{\delta}^{A})}).
\end{aligned}
\end{equation*}
Note that if $\delta<1$ then a priori it is not obvious whether $D_{\delta}(A,A_0)$ is finite.\par
As a corollary of Theorem \ref{t:app} we obtain an estimate in the H\"older norm provided we compensate for the initial values (see Corollary \ref{cor:Holder} below), i.e., for $\lambda\in [0,\inv{2})$ satisfying
$$0\leq \lambda < \min\{\tfrac{3}{2}-\tinv{\tau}-(\delta-\theta_F)\maxsym 0, \tinv{2}-\tinv{p}-(\delta-\theta_G)\maxsym 0\},$$
we have:
\begin{align*}
\n U - Sx_0 - i_{X_0}(U^{(0)} - S_0P_0x_0) \n_{L^p(\Omega;C^{\lambda}([0,T];X))}&
\lesssim D_{\delta}(A,A_0)(1+\n x_0
\n_{L^p(\Omega;X_{\delta}^{A})}).
\end{align*}\par
Our results imply that if $(A_n)_{n\in\N}$ is a family of generators of analytic semigroups such that the resolvent of $A_n$ converges to the resolvent of $A$ in $\calL(X_{\delta-1}^A,X)$ for some $\delta\in [0,1]$ (and $(A_n)_{n\in\N}$ is uniformly analytic), then the corresponding solution processes $U_n$ converge to the actual solution in $L^p(\Omega;C([0,T];X))$ and the convergence rate is given by $D_{\delta}(A,A_n)$.\par
In particular, we may apply Theorem \ref{t:app} to the Yosida approximation of $A$. In this case we assume $\theta_F$ and $\theta_G$ are positive. The $n^{th}$ Yosida approximation of $A$ is given by $A_n=nAR(n:A)$, and we let $U^{(n)}$ denote the solution to \eqref{SDE1} where $A$ is substituted by $A_n$. By applying Theorem \ref{t:app} we obtain that for $\eta\in [0,1]$ and $p\in (2,\infty)$ such that
$$\eta<\min\{\tfrac{3}{2}-\tinv{\tau}+\theta_F,\tfrac{1}{2}-\tinv{p}+\theta_G \}$$ we have, assuming $x_0\in L^p(\Omega,\mathcal{F}_0;X_{\eta}^A)$:
\begin{align*}
\n U - U^{(n)}\n_{L^p(\Omega;C([0,T];X))}
&\lesssim  n^{-\eta}(1+\n x_0 \n_{L^p(\Omega;X_{\eta}^{A})}).
\end{align*}
See also Theorem \ref{t:yosida}.\par
It was proven in \cite{KunNeer11} that if $(A_n)_{n\in\N}$ is a family of generators of analytic semigroups such that the resolvent of $A_n$ converges to the resolvent of $A$ in the strong operator topology, then the corresponding solution processes $U_n$ converge to the actual solution in $L^p(\Omega;C([0,T];X))$. However, the approach taken in that article does not provide convergence rates and requires $\theta_F,\theta_G\geq 0$.\par

Another article in which approximations of solutions to \eqref{SDE} are considered in the context of 
perturbations on $A$ is \cite{Brze:97}. In that article, it is assumed that $X$ is a \textsc{umd}
space with martingale type 2. In Section 5 of that article the author considers approximations of
$A$, $F$, $G$ and of the noise. Translated to our setting, the author assumes the perturbed
operator $A_0$ to satisfy $X^{A_0}_{\theta_F}=X^{A}_{\theta_F}$ and
$X^{A_0}_{\theta_G}=X^{A}_{\theta_G}$ (in particular, $X_0$ cannot be finite-dimensional).\par

A natural question to ask is how the type of perturbation studied here relates to the perturbations known in the literature. In  \cite{DeschSchap:87}, \cite{Jung:00}, and \cite{Rob:77} (see also \cite[Chapter III.3]{EngNa:00}) one has derived conditions for perturbations of $A$ that lead to an estimate of the type $\n S(t) - S_0(t)\n_{\calL(X)}=\mathcal{O}(t)$. In light of Proposition \ref{p:gbddSG} below these results are comparable to our results if we were to take $\alpha=-1$. In particular, \cite[Theorem III.3.9]{EngNa:00} gives precisely the same results as Proposition \ref{p:gbddSG}, but then for the case $\alpha=-1$ and $\beta=0$.\par

The proof of our perturbation result (Theorem \ref{t:app}) requires regularity results for stochastic convolutions. As the convolution under consideration concerns the difference between two semigroups instead of a single semigroup, the celebrated factorization method of \cite{DKZ} fails. Therefore we prove a new result on the regularity of stochastic convolutions, see Lemma \ref{lem:stochConv} below. This lemma in combination with some randomized boundedness results on $S-S_0P_0$ form the key ingredients of the proof Theorem \ref{t:app}. \par
The set-up of this article is as follows: Section \ref{sec:prelim} contains the preliminaries; i.e., the necessary results on analytic semigroups, vector-valued stochastic integration theory, and $\gamma$-boundedness. In that section we also state the precise assumptions on $A$, $F$, and $G$ in \eqref{SDE}, and prove the regularity results for (stochastic) convolutions that we need in the proof of Theorem \ref{t:app}. In Section \ref{sec:app} we prove Theorem \ref{t:app} and in Section \ref{sec-yosida} we prove convergence for the Yosida approximations.\par
\subsection*{Notation}
For an operator $A$ on a Banach space $X$ we denote the resolvent set of $A$ by
$\rho(A)$, i.e., $\rho(A)\subset \mathbb{C}$ is the set of all the complex numbers
$\lambda\in \mathbb{C}$ for which $\lambda I-A$ is (boundedly) invertible. For
$\lambda\in\rho(A)$ we denote the resolvent of
$A$ in $\lambda$ by $R(\lambda:A)$, i.e., $R(\lambda:A)=(\lambda I -A)^{-1}$. The spectrum of
$A$, i.e., the complement of $\rho(A)$ in $\C$, is denoted by $\sigma(A)$.\par
For $X, Y$ Banach spaces we let $\calL(X,Y)$ be the Banach space of all bounded linear operators from $X$ to
$Y$ endowed
with the operator norm. For brevity we set $\calL(X):=\calL(X,X)$.\par
For $T>0$ and $\theta>0$ we take the following definition for the H\"older norm:
\begin{align*}
\n f \n_{C^{\theta} ([0,T];Y)} &:= \n f(0)\n_{Y} + \sup_{0\leq s<t\leq T} \frac{\n f(t)-f(s)\n_{Y}}{(t-s)^{\theta}}.
\end{align*}\par
We write $A \lesssim B$ to express that there exists a constant $C>0$ such that $A\leq C B$, and we write $A\eqsim B$ if
$A\lesssim B$ and $B\lesssim A$. Finally, for $X$ and $Y$ Banach spaces we write $X\simeq Y$ if $X$ and
$Y$ are isomorphic as Banach spaces.\par
\section{Preliminaries}\label{sec:prelim}
Throughout this section $X$, $Y$, and $Y_{i}$, $i\in \{1,2\}$, will be used to denote Banach spaces and $H$ will denote a Hilbert space.
\subsection{Analytic semigroups}
For $\delta \in [0,\pi]$ we define $$\Sigma_{\delta}:= \{ z\in \C\setminus\{0\} \,:\, |\arg(z)| < \delta\}.$$
We recall the definition of analytic $C_0$-semigroups \cite[Chapter 2.5]{Pazy:83}:
\begin{definition}
Let $\delta\in (0,\frac{\pi}{2})$. A $C_0$-semigroup $(S(t))_{t\geq 0}$ on $X$ is called
\emph{analytic
in} $\Sigma_{\delta}$ if
\begin{enumerate}
\item $S$ extends to an analytic function $S:\Sigma_{\delta}\rightarrow \calL(X)$;
\item $S(z_1+z_2)=S(z_1)S(z_2)$ for $z_1,z_2\in \Sigma_{\delta}$;
\item $\lim_{z\rightarrow 0; z\in \Sigma_\delta}S(z)x=x$ for all $x\in X$.
\end{enumerate}
\end{definition}
Typical examples of operators generating analytic $C_0$-semigroups are second-order elliptic operators. The theorem
below is obtained from \cite[Theorem 2.5.2]{Pazy:83} by straightforward adaptations and gives some characterizations of
analytic $C_0$-semigroups that we shall need.
\begin{theorem}\label{t:analchar}
Let $A$ be the generator of a $C_0$-semigroup $(S(t))_{t\geq0}$ on $X$. Let $\omega\in \R$ be such that
$(e^{-\omega t}S(t))_{t\geq 0}$ is exponentially stable. The following statements are equivalent:
\begin{enumerate}
\item\label{analdef} $S$ is an analytic $C_0$-semigroup on $\Sigma_{\delta}$ for some $\delta \in
(0,\frac{\pi}{2})$ and for every $\delta'<\delta$ there exists a constant $C_{1,\delta'}$ such that
$\n e^{-\omega z} S(z)\n\leq C_{1,\delta'}$ for all $z\in \Sigma_{\delta'}$.
\item\label{analres} There exists a $\theta \in (0,\frac{\pi}{2})$ such that $ \omega+\Sigma_{\frac{\pi}{2}+\theta}
\subset \rho(A)$,
and for every $\theta'\in (0,\theta)$ there exists a constant $C_{2,\theta'}>0$ such that:
\begin{align*}
|\lambda-\omega| \n R(\lambda:A) \n \leq C_{2,\theta'}, \quad \textrm{for all } \lambda\in
\omega+\Sigma_{\frac{\pi}{2}+\theta'}.
\end{align*}
\item\label{analdiff} $S$ is differentiable for $t>0$ (in the uniform operator topology), $\frac{d}{dt}S=AS$, and there
exists a constant $C_3$ such that:
\begin{align*}
t\n A S(t)\n \leq C_3 e^{\omega t}, \quad \textrm{for all } t>0.
\end{align*}
\end{enumerate}
\end{theorem}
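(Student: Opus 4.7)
The plan is to reduce the three-way equivalence to the $\omega=0$ case handled by \cite[Theorem 2.5.2]{Pazy:83} via the standard rescaling $\tilde S(t):=e^{-\omega t}S(t)$, which is again a $C_0$-semigroup, now with generator $\tilde A := A - \omega I$. The exponential stability hypothesis gives $\|\tilde S(t)\|\leq M e^{-\varepsilon t}$ for some $\varepsilon>0$, so in particular $\tilde S$ is uniformly bounded on $[0,\infty)$, placing us in the setting of Pazy's theorem.

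First I would translate each of the three conditions into its counterpart for $\tilde A$. Condition (i) translates directly to analyticity of $\tilde S$ on $\Sigma_\delta$ with uniform bounds on subsectors $\Sigma_{\delta'}$. Using $\rho(\tilde A) = \rho(A)-\omega$ and $R(\lambda:\tilde A) = R(\lambda+\omega:A)$, condition (ii) becomes $\Sigma_{\pi/2+\theta}\subset \rho(\tilde A)$ with $|\lambda|\,\|R(\lambda:\tilde A)\|\leq C_{2,\theta'}$ on subsectors. For condition (iii), differentiating $S(t)=e^{\omega t}\tilde S(t)$ yields $AS(t) = e^{\omega t}\bigl(\tilde A \tilde S(t)+\omega \tilde S(t)\bigr)$; combined with $\|\tilde S(t)\|\lesssim 1$, this shows (iii) is equivalent to $t\|\tilde A\tilde S(t)\|\lesssim 1$ for $t>0$.

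With these translations in hand, Pazy's theorem furnishes all three equivalences directly. Should one want to reprove the substantive implications from scratch, for (ii)$\Rightarrow$(iii) one uses the Dunford--Cauchy representation $\tilde S(t) = \frac{1}{2\pi i}\int_\Gamma e^{\lambda t}R(\lambda:\tilde A)\,d\lambda$ along a contour $\Gamma\subset \Sigma_{\pi/2+\theta'}$ parametrized so that $|\lambda|t\simeq 1$, differentiates under the integral to obtain $\tilde A\tilde S(t)$, and estimates using the resolvent bound; for (iii)$\Rightarrow$(i) one first derives $\|\tilde A^n\tilde S(t)\|\leq (Cn/t)^n$ from the semigroup identity $\tilde A^n\tilde S(t) = (\tilde A \tilde S(t/n))^n$ and the bound in (iii), then defines $\tilde S(z)$ for $z$ in a sector $\Sigma_\delta$ by the Taylor series centered at $t=\mathrm{Re}(z)$, with $\delta$ small enough that the series converges absolutely.

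The main obstacle is bookkeeping rather than mathematics: one must track the factors $e^{\omega t}$ and $e^{\omega z}$ carefully so that the bounds stated for $S$ translate cleanly to bounds for $\tilde S$ without any sign assumption on $\omega$. In translating condition (iii), for example, the factor $e^{\omega t}$ absorbs the difference between $AS(t)$ and $\tilde A\tilde S(t)$ up to the harmless term $t\omega\|\tilde S(t)\|\lesssim 1$ guaranteed by exponential stability of $\tilde S$; a similar care is needed in the analytic case when transferring the bound $\|e^{-\omega z}S(z)\|\leq C_{1,\delta'}$ to a bound on $\|\tilde S(z)\|$.
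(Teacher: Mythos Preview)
Your proposal is correct and is precisely the ``straightforward adaptation'' the paper has in mind: the paper does not give an independent proof but simply cites \cite[Theorem 2.5.2]{Pazy:83} and notes that the stated version follows by the rescaling $\tilde S(t)=e^{-\omega t}S(t)$, $\tilde A=A-\omega I$, exactly as you outline. One minor remark: in your final paragraph, transferring the bound in (i) requires no care at all, since $\tilde S(z)=e^{-\omega z}S(z)$ is the \emph{definition} of the analytic extension of $\tilde S$; the only place where the exponential stability (rather than mere boundedness) of $\tilde S$ is genuinely used is in absorbing the term $t|\omega|\,\|\tilde S(t)\|$ uniformly over all $t>0$ when translating (iii).
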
\par

This theorem justifies the following definition:
\begin{definition}\label{d:uniftype}
Let $A$ be the generator of an analytic $C_0$-semigroup on $X$. We say that $A$ is
\emph{of type }$(\omega,\theta,K)$, where $\omega\in\R$, $\theta\in (0,\frac{\pi}{2})$ and $K>0$, if
$\omega+\Sigma_{\frac{\pi}{2}+\theta} \subseteq \rho(A)$, $(e^{\omega t}S(t))_{t\geq 0}$ is exponentially stable, and $$|\lambda-\omega|\n R(\lambda:A) \n_{\calL(X)} \leq K
\quad \textrm{for all } \lambda \in \omega+\Sigma_{\frac{\pi}{2}+\theta}.$$
\end{definition}
\begin{remark}\label{r:analcont}
It follows from the aforementioned proof in \cite{Pazy:83} that the constants $\delta,  C_{1,\delta'}$;
$\delta'\in (0,\delta)$, $C_{2,\theta'}$; $\theta'\in (0,\theta)$, and $C_{3}$ in Theorem \ref{t:analchar} can be
expressed explicitly in terms of $\omega$, $\theta$, and $K$; for example, we may take $C_3= \tfrac{K}{\pi \cos
\theta}$.
\end{remark}

Note that if $A$ is the generator of an analytic $C_0$-semigroup of type $(\omega,\theta,K)$ then for all $\lambda \in
\omega(1+2(\cos\theta)^{-1})+\Sigma_{\frac{\pi}{2}+\theta}$ one has (noting that the choice of $\lambda$ implies
$|\lambda|>2|\omega|$ and hence $|\lambda-\omega|>||\lambda|-|\omega||\geq \inv{2}|\lambda|$):
\begin{align}\label{AResEst}
\n A R(\lambda:A)\n_{\calL(X)} = \n \lambda R(\lambda:A) - I \n \leq 1 + 2K.
\end{align}\par
Let $A$ be the generator of an analytic semigroup of type $(\omega,\theta,K)$ on $X$. We define the
extrapolation spaces of $A$ conform \cite[Section 2.6]{Pazy:83}; i.e., for $\delta>0$ and $\lambda \in \C$ such that
$\mathscr{R}e(\lambda)>\omega$ we define $X^{A}_{-\delta}$ to be the closure of $X$ under the norm $\n
x\n_{X^{A}_{-\delta}} := \n (\lambda I -A)^{-\delta} x \n_X$. We also define the fractional domain spaces of $A$, i.e.,
for $\delta>0$ we define $X^{A}_{\delta}=D((\lambda I -A)^{\delta})$ and $\n x\n_{X^{A}_{\delta}} := \n (\lambda I
-A)^{\delta} x \n_X$. One may check that regardless of the choice of $\lambda$ the extrapolation spaces and the
fractional domain spaces are uniquely determined up to isomorphisms: for $\delta>0$ one has $(\lambda I -A)^{\delta}(\mu
I -A)^{-\delta}\in \calL(X)$ and:
$$\n (\lambda I -A)^{\delta}(\mu I -A)^{-\delta}\n_{\calL(X)}\leq C(\omega,\theta,K,\lambda,\mu),$$
where $C(\omega,\theta,K,\lambda,\mu)$ denotes a constant depending only on $\omega,\theta,K,\lambda,$ and $\mu$.
Moreover, for $\delta,\, \beta\in \R$ one has $(\lambda I-A)^{\delta}(\lambda I-A)^{\beta}=(\lambda I-A)^{\delta+\beta}$
on $X^A_{\gamma}$, where $\gamma= \max\{\beta,\delta+\beta\}$ (see \cite[Theorem 2.6.8]{Pazy:83}).\par
Statement \eqref{analdiff} in Theorem \ref{t:analchar} can be extended; from the proof of \cite[Theorem 2.6.13]{Pazy:83}
we obtain that for an analytic $C_0$-semigroup $S$ of type $(\omega,\theta,K)$ generated by $A$ one has, for $\delta>0$:
\begin{align}\label{analyticSG-Est}
\n S(t)\n_{\calL(X,X^A_{\delta})} & \leq 2\big(\tfrac{K}{\pi \cos \theta}\big)^{\lceil \delta \rceil} t^{-\delta} e^{\omega t}.
\end{align}\par

The following interpolation result holds for the fractional domain spaces (see \cite[Theorem 2.6.10]{Pazy:83}):
\begin{theorem}\label{t:fracPowInt}
Let $A$ be the generator of an analytic $C_0$-semigroup on $X$ of type
$(\omega,\theta,K)$. Let $\delta\in (0,1)$ and $\lambda\in \C$ such that $\mathscr{R}e(\lambda)>\omega$. Then for every
$x\in D(A)$ we have:
\begin{align*}
\n (\lambda I - A)^{\delta} x \n& \leq 2(1+K)\n x\n^{1-\delta}\n (\lambda I - A) x \n^{\delta}.
\end{align*}
\end{theorem}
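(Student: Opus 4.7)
The plan is to use the integral representation of $(\l I-A)^{\delta}$ together with the resolvent bound from Definition \ref{d:uniftype}, split the resulting integral at a free parameter, and optimise that parameter so as to balance the two pieces. Write $B:=\l I-A$ for brevity. Since $\mathscr{R}e(\l)>\omega$, for every $t\geq 0$ one has $\mathscr{R}e(t+\l-\omega)>0$, so $t+\l\in\omega+\Sigma_{\pi/2+\theta}$ and $(tI+B)^{-1}=R(t+\l:A)$ is well defined with $\n R(t+\l:A)\n\leq K/|t+\l-\omega|\leq K/t$ for $t>0$.

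For $x\in D(A)$ and $\delta\in(0,1)$ I would invoke the standard representation
\[
B^{\delta} x \;=\; \frac{\sin(\pi\delta)}{\pi}\int_0^\infty t^{\delta-1}\,B(tI+B)^{-1}x\,dt
\]
(valid for sectorial $B$; cf.\ \cite[Section 2.6]{Pazy:83}) together with two elementary estimates. First, from $B(tI+B)^{-1}=I-t(tI+B)^{-1}$ and the resolvent bound, $\n B(tI+B)^{-1}\n\leq 1+K$ uniformly in $t\geq 0$. Second, on $D(B)$ one may commute $B(tI+B)^{-1}x=(tI+B)^{-1}Bx$ and use $\n(tI+B)^{-1}\n\leq K/t$. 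Splitting the integral at a parameter $\eta>0$, applying the first bound on $(0,\eta)$ and the second on $(\eta,\infty)$, and computing $\int_0^\eta t^{\delta-1}dt=\eta^\delta/\delta$ and $\int_\eta^\infty t^{\delta-2}dt=\eta^{\delta-1}/(1-\delta)$, one arrives at
\[
\n B^\delta x\n \;\leq\; \frac{\sin(\pi\delta)}{\pi}\Bigl[\tfrac{(1+K)\eta^\delta}{\delta}\n x\n + \tfrac{K\eta^{\delta-1}}{1-\delta}\n Bx\n\Bigr].
\]

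Minimising in $\eta$ (the optimal choice is $\eta=K\n Bx\n/[(1+K)\n x\n]$) collapses this to
\[
\n B^\delta x\n \;\leq\; \frac{\sin(\pi\delta)}{\pi\,\delta(1-\delta)}\,K^{\delta}(1+K)^{1-\delta}\,\n x\n^{1-\delta}\n Bx\n^{\delta}.
\]
What remains is to show that the prefactor is bounded by $2(1+K)$. Two elementary facts suffice: $K^{\delta}(1+K)^{1-\delta}\leq 1+K$, since both bases are $\leq 1+K$; and the function $\sin(\pi\delta)/[\pi\delta(1-\delta)]$ is symmetric about $\delta=\tfrac12$ and attains its maximum $4/\pi<2$ there, so $\sin(\pi\delta)\leq 2\pi\delta(1-\delta)$ on $(0,1)$. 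The main obstacle is really only the careful bookkeeping of these constants through the optimisation; the integral representation and the resolvent estimates do the actual work, and the only subtlety worth flagging is verifying that the half-line $\{t+\l:t\geq 0\}$ sits in $\omega+\Sigma_{\pi/2+\theta}$ for complex $\l$, which was handled in the first paragraph.
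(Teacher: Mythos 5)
Your argument is correct and is essentially the proof of the cited source \cite[Theorem 2.6.10]{Pazy:83}: the Balakrishnan representation of $(\lambda I-A)^{\delta}$, the two resolvent bounds $\n B(tI+B)^{-1}\n\leq 1+K$ and $\n (tI+B)^{-1}Bx\n\leq (K/t)\n Bx\n$ (both legitimate since $\mathscr{R}e(\lambda)>\omega$ gives $|t+\lambda-\omega|\geq t$), and the split-and-optimise step. Your only addition is the explicit bookkeeping showing the constant is at most $2(1+K)$ (via $K^{\delta}(1+K)^{1-\delta}\leq 1+K$ and $\sin(\pi\delta)\leq 2\pi\delta(1-\delta)$, the latter following from concavity of $2\pi\delta(1-\delta)-\sin(\pi\delta)$), which is exactly the quantitative form the paper needs and which Pazy leaves implicit.
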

For more properties of $X_{\delta}^{A}$, $\delta\in \R$, we refer to \cite[Section 2.6]{Pazy:83}. \par
\subsection{Stochastic differential equations}
Throughout this section $X$ and $Y$ denote \textsc{umd} Banach spaces, and $(\Omega,(\mathcal{F}_t)_{t\geq 0},\P)$ denotes a probability space.\par
\subsubsection{Stochastic integration in \textsc{umd} Banach spaces}
Before turning to stochastic differential equations, we recall the basics concerning stochastic integration in
\textsc{umd} Banach spaces as presented in \cite{NVW07a}.
Recall that the \textsc{umd} property is a geometric Banach space property that is satisfied by all Hilbert spaces and
the `classical' reflexive function spaces, e.g.\ the $L^p$-spaces and Sobolev spaces $W^{k,p}$ for $k\in \N$ and $p\in
(1,\infty)$. For the precise definition of the \textsc{umd} property and for a more elaborate treatment of spaces
satisfying this property we refer to \cite{Bu3}.\par
Fix $T>0$. An {\em $H$-cylindrical Brownian motion over $(\Omega,(\mathcal{F}_t)_{t\geq
0},\P)$} is a mapping $W_{H}:L^2(0,T;H)\rightarrow L^2(\Omega)$ with the following properties:
\begin{enumerate}
\item for all $h\in L^2(0,T;H)$ the random variable $W_{H}(h)$ is Gaussian;
\item for all $h_1,h_2\in L^2(0,T;H)$ we have $\E W_{H}(h_1)W_{H}(h_2)=\langle h_1,h_2\rangle$;
\item for all $h\in H$ and all $t\in [0,T]$ we have that $W_H(1_{[0,t]}\otimes h)$ is $\mathcal{F}_t$-measurable;
\item for all $h\in H$ and all $s,t\in [0,T]$, $s\leq t$ we have that $W_H(1_{[s,t]}\otimes h)$ is independent of
$\mathcal{F}_s$.
\end{enumerate}
Formally, an $H$-cylindrical Brownian motion can be thought of as a standard Brownian motion taking values in the
Hilbert space $H$.\par

For the precise definition of the stochastic integral of a process $\Phi:[0,T]\times\Omega\rightarrow \calL(H,X)$ with
respect to $W_H$ we refer to \cite{NVW07a}. For our purposes it suffices to cite the characterization of such
stochastically integrable processes in terms of the so-called $\gamma$-radonifying norm of the process.\par
Let $\calH$ be a Hilbert space (we will take $\calH=L^2(0,T;H)$ later on). The Banach space $\gamma(\calH,X)$ is defined as the completion of $\calH\otimes X$ with
respect to the norm
$$ \Big\n \sum_{n=1}^N h_n\otimes x_n \Big\n_{\gamma(\calH,X)}^2 := \E \Big\n
\sum_{n=1}^N \gamma_n\otimes x_n \Big\n^2.$$
Here we assume that $(h_n)_{n=1}^N$ is an orthonormal sequence in $\calH$, 
$(x_n)_{n=1}^N$ is a sequence in $X$, and  $(\gamma_n)_{n=1}^N$ is a standard
Gaussian sequence on some
probability space. The space $\gamma(H,X)$ embeds continuously into $\calL(\calH,X)$
and it elements
are referred to as the {\em $\g$-radonifying} operators from $\calH$ to $X$. For properties of this norm and further details we refer to the survey paper \cite{Nee-survey}.\par

Let $(R,\mathcal{R},\mu)$ be a measure space. In that case $\gamma(R,H;X)$ is used as short-hand notation for
$\gamma(L^2(R;H),X)$; in particular, $\gamma(0,T,H;X)$ is short-hand notation for $\gamma(L^2(0,T;H);X)$. We use $\gamma(0,T;X)$ to denote $\gamma(L^2(0,T),X)$. If $X$ is a Hilbert space, and $(R,\mathcal{R},\mu)$ is a $\sigma$-finite measure space, then $\gamma(R,H;X)= L^2(R,\mathcal{L}_2(H,X))$ where $\mathcal{L}_2(H,X)$ denotes the
space of Hilbert-Schmidt operators from $H$ to $X$.\par

A process $\Phi:[0,\infty)\times \Omega\rightarrow \calL(H,X)$ is called
$H$-\emph{strongly measurable} if for every $h\in H$ the process $\Phi h$ is strongly measurable. The process is called
\emph{adapted} if $\Phi h$ is adapted 
for each $h\in H$.\par
We cite \cite[Theorem 3.6]{NVW07a}:
\begin{theorem}[$L^p$-stochastic integrability]\label{t:stochint}
Let $p\in (1,\infty)$ and $T>0$ be fixed. For an $H$-strongly measurable adapted process $\Phi:(0,T)\times \Omega
\rightarrow \calL(H,X)$ the
following are equivalent:
\begin{enumerate}
\item $\Phi$ is $L^p$-stochastically integrable with respect to $W_H$;\label{l:sintble}
\item we have $\Phi^* x^* \in L^p(\Omega;L^2(0,T;H))$ for all $x^*\in X^*$
and there exists a (necessarily unique) $R_\Phi\in L^{p}(\Omega;
\g(0,T,H;X))$ such that for all  $x^*\in X^*$ we have
$$R_\Phi^* x\s = \Phi^* x\s$$ in $L^p(\O;L^2(0,T;H))$.
\end{enumerate}
In this situation one has
\begin{align}\label{BDG}
\E\, \sup_{0\leq t\leq T} \Big\n \int_{0}^t \Phi \,dW_H \Big\n_Y^p \eqsim_p
\E\, \n R_\Phi \n_{\g(0,T,H;Y)}^{p},
\end{align}
the implied constants being independent of $\Phi$.
\end{theorem}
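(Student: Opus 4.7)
The plan is to follow the strategy of \cite{NVW07a}: first establish the two-sided equivalence \eqref{BDG} for adapted step processes, then extend to the general case by density and use the equivalence itself as the extension mechanism.

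First, I would treat the elementary case. For an adapted step process $\Phi = \sum_{i=1}^N \mathbf{1}_{(t_{i-1},t_i]} \otimes \Phi_i$ with $\Phi_i$ an $\mathcal{F}_{t_{i-1}}$-measurable finite-rank operator, one defines the stochastic integral pathwise via $\int_0^T \Phi\,dW_H = \sum_i \Phi_i (W_H(\mathbf{1}_{(t_{i-1},t_i]}\otimes h^{(i)}_j)e_j)$ in the obvious way, and checks directly that the associated $R_\Phi$ equals the process itself viewed as an element of $\gamma(0,T,H;X)$. The equivalence $\E\|\int_0^T\Phi\,dW_H\|_X^p \eqsim_p \E\|R_\Phi\|_{\gamma(0,T,H;X)}^p$ for step processes is the heart of the theorem; here the UMD property enters decisively via the Garling/McConnell decoupling inequalities, which permit replacing the Brownian increments $\Delta_i W_H$ by an independent copy $\widetilde{\Delta_i W_H}$ conditional on $\mathcal{F}_{t_{i-1}}$. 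After decoupling, the integral becomes a sum of conditionally Gaussian vectors whose $L^p(\Omega;X)$ norm is, by Fubini and the definition of the $\gamma$-norm, precisely the $L^p(\Omega;\gamma(0,T,H;X))$ norm of $R_\Phi$.

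Next, I would upgrade the fixed-time estimate to the maximal estimate \eqref{BDG}. Since $t\mapsto \int_0^t\Phi\,dW_H$ is an $X$-valued martingale and $X$ is UMD (hence has the martingale cotype property implicit in Doob's inequality for UMD-valued martingales), Doob's maximal inequality yields $\E\sup_{0\leq t\leq T}\|\int_0^t\Phi\,dW_H\|^p \lesssim_p \E\|\int_0^T\Phi\,dW_H\|^p$. Combined with the fixed-time equivalence and the observation that $\|R_{\mathbf{1}_{[0,t]}\Phi}\|_{\gamma(0,t,H;X)} \leq \|R_\Phi\|_{\gamma(0,T,H;X)}$, this gives \eqref{BDG} for step processes.

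For the implication (ii)$\Rightarrow$(i), given $R_\Phi\in L^p(\Omega;\gamma(0,T,H;X))$, density of $H$-strongly measurable adapted step processes in the corresponding subspace of $L^p(\Omega;\gamma(0,T,H;X))$ yields an approximating sequence $\Phi^{(n)}$ whose associated $R_{\Phi^{(n)}}$ converge to $R_\Phi$; the equivalence applied to differences $\Phi^{(n)}-\Phi^{(m)}$ shows that $(\int_0^\cdot \Phi^{(n)}\,dW_H)$ is Cauchy in $L^p(\Omega;C([0,T];X))$, and its limit is, by definition, the stochastic integral of $\Phi$. The converse (i)$\Rightarrow$(ii) is symmetric: stochastic integrability gives approximating step processes, and \eqref{BDG} for the differences forces convergence in $L^p(\Omega;\gamma(0,T,H;X))$, the limit providing the required $R_\Phi$. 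Passing to the limit in \eqref{BDG} concludes the proof.

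The main obstacle is the two-sided estimate for step processes: the upper bound $\E\|\int\Phi\,dW_H\|^p \lesssim \E\|R_\Phi\|_{\gamma}^p$ requires the UMD decoupling inequality (which is essentially equivalent to UMD, by results of Garling), while the lower bound requires the converse decoupling inequality. Without UMD, only one of these holds, and the characterization collapses to a sufficient-condition statement of martingale-type-$2$ flavor. Properly invoking the decoupling machinery and verifying that the underlying density argument respects adaptedness is the technical crux.
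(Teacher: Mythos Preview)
The paper does not prove this theorem at all: it is cited verbatim as \cite[Theorem 3.6]{NVW07a}, so there is no ``paper's own proof'' to compare against. Your sketch is essentially the strategy carried out in \cite{NVW07a} --- decoupling for step processes via the UMD property, Doob's maximal inequality to pass from the endpoint to the running supremum, and a density/closure argument to extend to general integrands --- and is correct in outline.

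One small correction: Doob's $L^p$ maximal inequality for Banach-space-valued martingales does not require the UMD property or any martingale cotype assumption; it follows from the scalar Doob inequality applied to the real submartingale $t\mapsto \|M_t\|_X$. Your parenthetical remark about ``martingale cotype implicit in Doob's inequality'' is therefore superfluous and slightly misleading. The UMD hypothesis is used exactly where you say it is used: in the two-sided decoupling estimate for step processes, and (more subtly) in the approximation of adapted elements of $L^p(\Omega;\gamma(0,T,H;X))$ by adapted step processes.
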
\par
The inequalities in equation \eqref{BDG} are referred to as \emph{Burkholder-Davis-Gundy} inequalities.\par
From now on, if $\Phi$ is stochastically integrable, we shall simply use $\Phi$ to denote both a process and the
(unique) $R_\Phi\in L^{p}(\Omega; \gamma(0,t,H;X))$ that satisfies $R_\Phi^* x\s = \Phi^* x\s$.\par
\subsubsection{The SDE}\label{ss:setting} In Section \ref{sec:app} we will prove a perturbation result for the following
stochastic differential equation:
\begin{align}\label{SDE}\tag{SDE}
\left\{ \begin{array}{rcl} dU(t) &\! =\! & AU(t)\,dt + F(t,U(t))\,dt + G(t,U(t))\,dW_H(t);\quad t\in [0,T],\\
U(0)&\! =\!  &x_0. \end{array}\right.
\end{align}
where $A$, $F$ and $G$ are assumed to satisfy conditions \MA, \MF,
and \MG\ below. The assumptions on $F$ depend on the type of the
Banach space $F$. The type of a Banach space is defined based on
the behavior of moments of randomized sums, and it  always takes
values in the interval $[1,2]$. The greater the type, i.e., the
closer to $2$, the more the space behaves like a Hilbert space:
every Banach space has type at least $1$, and all Hilbert spaces
have type $2$. The $L^p$-spaces have type $\min\{p,2\}$. We refer
to \cite{LeTa:91} for a precise definition of type (and co-type)
and further details. Both \textsc{umd} and type are preserved
under Banach space isomorphisms.\par
\begin{description}
\item[\MA{}] $A$ generates an analytic $C_0$-semigroup on a \textsc{umd} Banach space $X$.
\item[\MF{}] For some $\theta_F>  -1 + (\inv{\tau}-\frac12)$, where $\tau$ is the type of $X$, the function 
$F:[0,T]\times X\rightarrow X_{\theta_F}$ is measurable in the sense that for all $x\in X$ the mapping $F(\cdot,x):[0,T]\rightarrow X_{\theta_F}$ is strongly measurable. Moreover, $F$ is uniformly Lipschitz continuous and uniformly of linear growth on $X$.\par

That is to say, 
there exist constants $C_0$ and
$C_1$ such that for all $t\in [0,T]$ and all $x,y\in X$:
\begin{align*}
\n F(t,x) - F(t,y) \n_{X_{\theta_F}} & \leq C_0 \n x-y\n_{X}, \\ 
\n F(t,x)\n_{X_{\theta_F}} &\leq 
C_1(1+\n x\n_{X}).
\end{align*}
The least constant $C_0$ such that the above holds is denoted by Lip$(F)$, and the least constant $C_1$ such that the
above holds is denoted by $M(F)$.\par
\item[\MG{}] For some $\theta_G>-\inv{2}$, the function $G : [0,T]\times X\rightarrow \calL(H,X_{\theta_G})$ 
is measurable in the sense that for all $h\in H$ and $x\in X$ the mapping $G(\cdot,x)h:[0,T]\rightarrow X_{\theta_G}$ is strongly measurable. Moreover, $G$ is uniformly $L^2_{\gamma}$-Lipschitz continuous and uniformly of linear growth on $X$.\par
That is to say, 
there exist constants $C_0$ and 
$C_{1}$ such that for 
all $\alpha\in [0,\inv{2})$, all $t\in [0,T]$, and all simple functions $\phi_1$, $\phi_2$, $\phi: [0,T]\to X$ one has:
\begin{align*}
&\n s\mapsto (t-s)^{-\alpha}[G(s,\phi_1(s)) -G(s,\phi_2(s))] \n_{\gamma(0,t;H,X_{\theta_G})} \\
&\qquad \qquad \le  C_0 \n s\mapsto (t-s)^{-\alpha}[\phi_1 -\phi_2] 
\n_{L^2(0,t;X)\,\cap \,\gamma(0,t;X)}; \\
&\n s\mapsto (t-s)^{-\alpha} G(s,\phi(s))\n_{\gamma(0,t;H,X_{\theta_G})}\\
&\qquad \qquad  \leq C_1\big(1+ \n s\mapsto
(t-s)^{-\alpha}\phi(s)\n_{L^2(0,t;X)\,\cap\, \gamma(0,t;X)}\big).
\end{align*}%
The least constant $C_0$ such that the above holds is denoted by $\textrm{Lip}_{\gamma}(G)$, and the least constant $C_1$ such that the
above holds is denoted by $M_{\gamma}(G)$.
\end{description}

If $Y_2$ is a type 2 space and $G:[0,T]\times Y_1\rightarrow \gamma(H,Y_2)$ is Lipschitz-continuous, uniformly in $[0,T]$, then $G$ is $L^2_{\gamma}$-Lipschitz continuous (see \cite[Lemma 5.2]{NVW08}). More examples of
$L^2_{\gamma}$-Lipschitz continuous operators can be found in
\cite{NVW08}.\par

\subsubsection{Existence and uniqueness}

We recall an existence and uniqueness result for the problem \eqref{SDE}. This result is formulated in a space of continuous, `weighted' stochastically integrable processes which is defined as follows:

\begin{definition}\label{d:Vspace}
For $\alpha\geq 0$, $1\leq p<\infty$ and $0\leq a \leq b <
\infty$, we denote by $\Vpc{\alpha}([a,b]\times \Omega;Y)$ the
space of adapted, continuous processes $\Phi:[a,b]\times \Omega\rightarrow
Y$ for which the following norm is finite:
\begin{align*}
\n \Phi\n_{\Vpc{\alpha}([a,b]\times \Omega;Y)} = \n \Phi \n_{L^p(\Omega;C([a,b];Y))} + \sup_{a\leq t\leq b}\n
s\mapsto (t-s)^{-\alpha}\Phi(s)\n_{L^p(\Omega;\gamma(a,t;Y))}.
\end{align*}
\end{definition}

One easily checks that for $a\leq c<d\leq b$,
\begin{equation}\label{Vtransinv}
\n \Phi|_{[c,d]} \n_{ \Vpc{\alpha}([a,b]\times\Omega;X)} = \n \Phi|_{[c,d]} \n_{
\Vpc{\a} ([c,d]\times \Omega;X)}.
\end{equation}\par
Moreover, for $0\le\beta\le\alpha<\frac12$ and $\Phi \in \Vpc{\alpha}([a,b]\times \Omega;X)$ one has:
\begin{equation}\label{Vchange-of-alpha}
\n\Phi\n_{\Vpc{\beta} ([a,b]\times \Omega;X)} \leq (b-a)^{\alpha-\beta}\n
\Phi\n_{\Vpc{\alpha}([a,b]\times \Omega;X)}.
\end{equation}
Note also that we have $\Vpc{\alpha}([0,T]\times \Omega;X)
\subset L^p(\Omega;C([0,T];X))$. On the other hand, we have the following embedding (see \cite[Lemma 3.3]{NVW08}):
\begin{lemma}\label{lem:VEmbedHoelder}
Let $X$ be a Banach space with type $\tau$. Then for all $T>0$, $\eps>0$ and $\alpha
\in [0,\inv{2})$ there exists a constant $C$ such that for all $T_0\in [0,T]$ one has:
\begin{equation}
\Vpc{\alpha}([0,T_0]\times\Omega;X) \leq C L^p(\Omega;C^{\inv{\tau}-\inv{2}+\eps} ([0,T_0];X)).
\end{equation}
\end{lemma}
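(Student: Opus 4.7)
The plan is to bound the $\Vpc{\alpha}$-norm by the $L^p(\Omega;C^\lambda)$-norm in two steps, where $\lambda:=\inv{\tau}-\inv{2}+\eps$. Since $C^\lambda\hookrightarrow C$, the $L^p(\Omega;C([0,T_0];X))$ part of the $\Vpc{\alpha}$-norm is automatically dominated by $\n\Phi\n_{L^p(\Omega;C^\lambda([0,T_0];X))}$. The substantive task is therefore to prove, pathwise and uniformly in $t\in[0,T_0]$, an estimate of the form
\begin{align*}
\n s\mapsto (t-s)^{-\alpha}\Phi(s)\n_{\gamma(0,t;X)}\leq C\,\n\Phi\n_{C^\lambda([0,T_0];X)},
\end{align*}
with $C$ independent of $T_0\in[0,T]$; interchanging the supremum over $t$ with the (outer) $L^p(\Omega)$-norm then yields the claim at the level of random variables.

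The key tool I would invoke is the embedding $C^\mu([a,b];X)\hookrightarrow \gamma([a,b];X)$, valid whenever $X$ has type $\tau$ and $\mu>\inv{\tau}-\inv{2}$, together with its $(b-a)$-scaling obtained via an affine reparametrization to the unit interval. To accommodate the singular weight $(t-s)^{-\alpha}$, my plan is to localize via a dyadic decomposition of $[0,t]$ near the endpoint $s=t$, setting $I_k:=[t(1-2^{-k}),\,t(1-2^{-k-1})]$ for $k\geq 0$. Two standard facts about the $\gamma$-norm are then exploited: orthogonality over disjoint sets gives the Pythagorean identity $\n\Psi\n_{\gamma(0,t;X)}^2=\sum_{k}\n \Psi\mathbf{1}_{I_k}\n_{\gamma(I_k;X)}^2$, and scalar $L^\infty$-multipliers act boundedly on $\gamma$-norms, so that $\n g\cdot\Psi\n_{\gamma}\leq \n g\n_{L^\infty}\n\Psi\n_{\gamma}$.

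On each $I_k$ one has $(t-s)^{-\alpha}\lesssim (t\cdot 2^{-k})^{-\alpha}$ and $|I_k|\asymp t\cdot 2^{-k}$. Combining the multiplier estimate with the rescaled Hölder embedding should yield, pathwise,
\begin{align*}
\n (t-\cdot)^{-\alpha}\Phi\mathbf{1}_{I_k}\n_{\gamma(I_k;X)}\lesssim (t\cdot 2^{-k})^{\inv{2}-\alpha+\eps}\,\n\Phi\n_{C^\lambda([0,T_0];X)},
\end{align*}
where the extra power $\eps=\lambda-(\inv{\tau}-\inv{2})$ is precisely what the Hölder-to-$\gamma$ embedding supplies beyond the naive $|I_k|^{1/2}$-scaling. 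Squaring and summing over $k$ produces a geometric series that converges exactly when $\tinv{2}-\alpha+\eps>0$, which is guaranteed by $\alpha<\inv{2}$; the total is $\lesssim t^{1-2\alpha+2\eps}$, uniformly bounded for $t\in[0,T]$, and after taking $L^p(\Omega)$-norms the bound transfers to the desired embedding.

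The main technical obstacle will be the careful bookkeeping of rescaling constants in the type-$\tau$ Hölder-to-$\gamma$ embedding, and verifying that the dyadic sum produces a constant uniform in $T_0\in[0,T]$. The singular weight can be pushed through the $\gamma$-norm only because of the localization: on each dyadic piece it becomes a bounded scalar multiplier, and the $\eps$-margin over $\inv{\tau}-\inv{2}$ compensates simultaneously for the $|I_k|^{1/2}$-scaling of the $\gamma$-norm and the negative-exponent weight, which is what allows the geometric sum to close.
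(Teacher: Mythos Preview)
The paper does not prove this lemma; it simply cites \cite[Lemma~3.3]{NVW08}. Your proposal is therefore not comparable to an argument in the paper, but it is a reasonable self-contained route. The overall strategy---dyadic localisation near the singularity of $(t-s)^{-\alpha}$, then invoking the H\"older-to-$\gamma$ embedding on each block---is sound, and the geometric series does close. Two points require correction, however.

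First, the ``Pythagorean identity'' $\n\Psi\n_{\gamma(0,t;X)}^2=\sum_{k}\n \Psi\mathbf{1}_{I_k}\n_{\gamma(I_k;X)}^2$ is a Hilbert-space statement and is \emph{false} in a general Banach space $X$: take $X=\ell^1$ and single Gaussians on each block to see that the left-hand side can be of order $N^2$ while the right-hand side is of order $N$. What you actually have at your disposal is either the plain triangle inequality $\n\Psi\n_\gamma\le\sum_k\n\Psi\mathbf{1}_{I_k}\n_\gamma$, or---using that $X$ has type $\tau$ and that the block contributions are independent Gaussians---an $\ell^\tau$-type bound $\n\Psi\n_\gamma\lesssim\big(\sum_k\n\Psi\mathbf{1}_{I_k}\n_\gamma^{\tau}\big)^{1/\tau}$. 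Either suffices for your purposes, since the resulting series is geometric with ratio $2^{-(\inv{2}-\alpha)}<1$.

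Second, the scaling you claim for the embedding is off. An affine reparametrisation of $C^{\lambda}(I_k;X)\hookrightarrow\gamma(I_k;X)$ yields
\[
\n\Phi\n_{\gamma(I_k;X)}\lesssim |I_k|^{1/2}\big(\n\Phi(a_k)\n_X+|I_k|^{\lambda}[\Phi]_{C^\lambda}\big)\lesssim |I_k|^{1/2}\,\n\Phi\n_{C^\lambda([0,T_0];X)},
\]
so the gain is $|I_k|^{1/2}$, not $|I_k|^{1/2+\eps}$. The extra $\eps=\lambda-(\tinv{\tau}-\tinv{2})$ is what makes the embedding \emph{valid} (one needs $\lambda>\tinv{\tau}-\tinv{2}$ strictly), not an additional decay factor. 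With the corrected exponent, the weight contributes $(t2^{-k})^{-\alpha}$, the embedding contributes $(t2^{-k})^{1/2}$, and the sum $\sum_k (t2^{-k})^{\inv{2}-\alpha}$ still converges precisely because $\alpha<\tinv{2}$; the outcome is $\lesssim t^{\inv{2}-\alpha}\n\Phi\n_{C^\lambda}$, uniform in $t\in[0,T]$. After these repairs your argument goes through.
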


If $G:[0,T]\times X\rightarrow \calL(H,X_{\theta_G})$ satisfies \MG{} and
$\Phi_1,\Phi_2 \in \Vpc{\alpha}([0,T]\times \Omega;X)$ 
for some $p\geq 2$, then:
\begin{equation}
\begin{aligned}\label{GLipschitzV2}
& \sup_{0\leq t\leq T}\n s\mapsto
(t-s)^{-\alpha}[G(s,\Phi_1(s))-G(s,\Phi_2(s))]\n_{L^p(\Omega;\gamma(0,t;X_{
\theta_G}))}\\
&\qquad \qquad \leq (1+T^{\inv{2}-\alpha})\textrm{Lip}_{\gamma}(G)\n
\Phi_1-\Phi_2\n_{\Vpc{\alpha}([0,T]\times \Omega;X)},
\end{aligned}
\end{equation}
and, for $\Phi \in \Vpc{\alpha}([0,T]\times \Omega;X)$:
\begin{equation}
\begin{aligned}\label{GLipschitzV1}
&\sup_{0\leq t\leq T}\n s\mapsto
(t-s)^{-\alpha}G(s,\Phi(s))\n_{L^p(\Omega;\gamma(0,t;X_{\theta_G}))} \\
&\qquad \qquad  \leq (1+T^{\inv{2}-\alpha})M_\gamma(G)\big( 1+\n \Phi
\n_{\Vpc{\alpha} ([0,T]\times \Omega;X)}\big).
\end{aligned}
\end{equation}
The following existence and uniqueness result for solutions
to \eqref{SDE} is presented in \cite[Theorem 6.2]{NVW08}.

\begin{theorem}[Van Neerven, Veraar and Weis, 2008]\label{thm:NVW08}
Consider \eqref{SDE} under the assumptions \MA{}, \MF{}, and \MG{}. Let $x_0\in L^p(\Omega,\mathcal{F}_0;X_{\eta})$ for $p\in (2,\infty)$ and $\eta>0$ satisfying
$$ 0 \leq \eta < \min\{\tfrac{3}{2}-\tinv{\tau}+\theta_F,\, \tinv{2}-\tinv{p}+\theta_G \}.$$ 
Then for any $T>0$ and any $\alpha\in [0,\inv{2})$ there exists a unique $U \in \Vpc{\alpha}([0,t]\times\Omega;X_{\eta}^{A})$ such that $s\mapsto S(t-s)G(s,U(s))$ is stochastically integrable for all $t\in[0,T]$, and $U$ satisfies:
\begin{align}\label{SDE_sol}
U(t)=S(t)x_0 + \int_{0}^{t} S(t-s)F(s,U(s))\,ds + \int_{0}^{t} S(t-s)G(s,U(s))\,dW_H(s)
\end{align}
almost surely for all $t\in [0,T]$. Moreover:
\begin{align}\label{UinVestimate}
\n U \n_{ \Vpc{\alpha}([0,T]\times \Omega;X_{\eta}^{A})}  & \lesssim 1+\n x_0\n_{L^p(\Omega;X_{\eta}^{A})}.
\end{align}
\end{theorem}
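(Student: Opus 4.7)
The plan is to apply the Banach fixed point theorem to the mild solution map
\[
L(U)(t) = S(t)x_0 + \int_0^t S(t-s)F(s,U(s))\,ds + \int_0^t S(t-s)G(s,U(s))\,dW_H(s)
\]
on the space $\Vpc{\alpha}([0,T_0]\times \Omega; X_\eta^A)$ for $T_0$ sufficiently small, and then iterate using \eqref{Vtransinv} to cover $[0,T]$. The space $\Vpc{\alpha}$ is designed precisely to carry both the pointwise-in-time information needed for the continuous norm and the $\gamma$-radonifying information needed to re-feed the process into $G$ via \eqref{GLipschitzV2}--\eqref{GLipschitzV1}.

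The first step is to verify that $L$ maps $\Vpc{\alpha}([0,T_0]\times \Omega; X_\eta^A)$ into itself. For the deterministic initial-value term $t \mapsto S(t)x_0$, I would use $x_0\in L^p(\Omega;X_\eta^A)$ together with \eqref{analyticSG-Est} (for the $\gamma$-norm piece, I would invoke a $\gamma$-multiplier/Weis-type result so that $s\mapsto (t-s)^{-\alpha}S(s)x_0$ is controlled in $\gamma(0,t;X_\eta^A)$). For the deterministic convolution involving $F$, I would combine $F(s,U(s))\in X_{\theta_F}$ with the smoothing estimate \eqref{analyticSG-Est} to gain $\eta - \theta_F$ derivatives; the condition $\eta < \tfrac{3}{2}-\tfrac{1}{\tau}+\theta_F$ ensures the exponent in the resulting integral is strictly less than $1$, so the convolution is both pathwise continuous in $X_\eta^A$ and has finite weighted $\gamma$-norm (here the type-$\tau$ hypothesis enters through the $\gamma$-$L^\tau$ comparison used to bound a $\gamma$-norm of a vector-valued Bochner integral). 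For the stochastic convolution, I would use the $L^p$-stochastic integrability criterion (Theorem \ref{t:stochint}) combined with the Burkholder-Davis-Gundy inequality \eqref{BDG} to reduce the $L^p(\Omega;C([0,T_0];X_\eta^A))$ norm and the weighted $\gamma$-norm to $\gamma$-norm estimates of $s\mapsto (\cdot-s)^{-\alpha-\eta+\theta_G}G(s,U(s))$; these are controlled via \eqref{GLipschitzV1} together with ideal-property estimates for $A^\delta S(\cdot)$, and the condition $\eta < \tfrac{1}{2}-\tfrac{1}{p}+\theta_G$ is what keeps the singular kernel $\gamma$-radonifying.

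Next I would establish the contraction: using \eqref{GLipschitzV2}, the analogous Lipschitz estimate for the drift based on Lip$(F)$, and the fact that all three terms pick up a positive power of $T_0$ (either from the integral bound or from \eqref{Vchange-of-alpha}), one can choose $T_0 = T_0(\mathrm{Lip}(F), \mathrm{Lip}_\gamma(G), \omega, \theta, K, \eta, \theta_F, \theta_G, \alpha, p)$ small enough to make $L$ a strict contraction on $\Vpc{\alpha}([0,T_0]\times\Omega;X_\eta^A)$. The crucial point is that $T_0$ does not depend on $x_0$, so the local solution can be extended to $[0,T]$ by a finite number of iterations, using the $\mathcal{F}_{kT_0}$-measurable value $U(kT_0)$ as the new initial condition and exploiting \eqref{Vtransinv} to identify the weighted norms on consecutive intervals. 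The a priori estimate \eqref{UinVestimate} then follows either directly from the fixed-point identity (plugging the linear growth bounds \eqref{GLipschitzV1} and the corresponding one for $F$ into $L(U)=U$ and absorbing the $\|U\|_{\Vpc{\alpha}}$-term on a short interval) or from a Gronwall-type argument over the finitely many subintervals.

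The main obstacle is the stochastic convolution estimate: continuity in $t$ of $t\mapsto \int_0^t S(t-s)G(s,U(s))\,dW_H(s)$ with values in $X_\eta^A$ is delicate because the classical Da~Prato--Kwapie\'n--Zabczyk factorization has to be carried out in the $\gamma$-radonifying setting, and one must keep track of the precise trade-off between the fractional power $\eta-\theta_G$ (absorbed into the semigroup via \eqref{analyticSG-Est}) and the singularity $(t-s)^{-\alpha}$ in the weighted norm, the combination being integrable exactly when $\eta < \tfrac{1}{2}-\tfrac{1}{p}+\theta_G$ after pulling an $L^p(\Omega)$ out via BDG. Handling this interplay carefully, in a way uniform in $t\in[0,T_0]$, is where most of the technical work lies.
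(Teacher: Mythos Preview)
The paper does not give its own proof of this theorem: it is quoted verbatim as \cite[Theorem 6.2]{NVW08} and only accompanied by a remark explaining two minor relaxations (allowing $\theta_F,\theta_G\geq 0$, and dropping the restriction $\alpha>\tfrac{1}{p}-\theta_G$ via \eqref{Vchange-of-alpha} and the proof of Proposition~\ref{prop:stochConvV}). So there is no in-paper proof to compare against.

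Your sketch is nonetheless the correct one, and it is precisely the strategy carried out in \cite{NVW08}: a Banach fixed point argument for the mild-solution map on $\Vpc{\alpha}([0,T_0]\times\Omega;X_\eta^A)$, with $T_0$ chosen small (independently of $x_0$) so that the map contracts, followed by finitely many restarts using \eqref{Vtransinv}. The ingredients you name --- Lemma~\ref{lem:analyticRbound} and the Kalton--Weis multiplier theorem for the $\gamma$-norm of the initial term, the type-$\tau$ embedding for the deterministic convolution, and the $\gamma$-boundedness of $\{s^\alpha S(s)\}$ combined with \eqref{GLipschitzV1}--\eqref{GLipschitzV2} for the stochastic convolution --- are exactly the ones used there. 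One small correction: in \cite{NVW08} pathwise continuity of the stochastic convolution is obtained not by a $\gamma$-radonifying version of factorization run directly in $X_\eta^A$, but by the standard Da~Prato--Kwapie\'n--Zabczyk factorization combined with $\gamma$-boundedness of the analytic semigroup; your description of the ``main obstacle'' is otherwise accurate.
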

\begin{remark}
In \cite{NVW08} the authors assume $\theta_F\leq 0$ and $\theta_G\leq 0$ (and promptly refer to them as $-\theta_F$ and
$-\theta_B$). However, one may check that the theorem remains valid for $\theta_F,\theta_G\geq 0$, which leads to extra
space regularity of the solution (i.e., greater values for $\eta$ in $\eqref{UinVestimate}$).\par
Moreover, in \cite{NVW08} the authors assume $\alpha>\inv{p}-\theta_G$, this assumption can be dropped: existence of a
solution in $\Vpc{\alpha}([0,t]\times\Omega;X)$ for any $\alpha\in [0,\inv{2})$ follows by equation \eqref{Vchange-of-alpha}. Uniqueness of a
solution for any $\alpha\in [0,\inv{2})$ follows by observing that if $\Phi \in \Vpc{\alpha}([0,t]\times\Omega;X_{\eta}^{A})$ for
some $\alpha\in [0,\inv{2})$, and $\Phi$ satisfies \eqref{SDE_sol}, then $\Phi \in
\Vpc{\beta}([0,t]\times\Omega;X_{\eta}^{A})$ for $0\leq \beta<\alpha+\inv{2}+\theta_G-\eta$ (this follows by the proof of Proposition \ref{prop:stochConvV}).
\end{remark}\par
\subsection{\texorpdfstring{$\gamma$-Boundedness}{Randomized boundedness}}
For vector-valued stochastic integrals, the concept of $\gamma$-boundedness plays the same role as uniform boundedness does for ordinary integrals: the Kalton-Weis multiplier theorem (Proposition \ref{prop:KW} below) allows one to estimate terms out of a stochastic integral, provided they are $\gamma$-bounded. Note that any $\gamma$-bounded set of operators is automatically uniformly bounded, and the reverse holds if $X$ is a Hilbert space.\par
A family $\mathscr{B}\subset \calL(X,Y)$ is called {\em
$\g$-bounded} if there exists a constant $C$ such that for all
$N\ge 1$, all $x_1,\dots, x_N\in X$, and all $B_1,\dots, B_N\in \mathscr{B}$ we have:
$$\E \Big\n \sum_{n=1}^N \g_n B_n x_n\Big\n_Y^2 \le C^2 \E \Big\n \sum_{n=1}^N \g_n x_n\Big\n_X^2.
$$
The least admissible constant $C$ is called the $\g$-bound of $\mathscr{B}$,
notation: $\g_{[X,Y]}(\mathscr{B})$.\par
The following lemma is a direct consequence of the Kahane contraction principle:
\begin{lemma}\label{lem:cont}
If $\mathscr{B}\subset \calL(X,Y)$ is $\g$-bounded and $M>0$ then $M\mathscr{B}:= \{ aB: a\in [-M,M], B\in
\mathscr{B}\}$ is $\g$-bounded with $\g_{[X,Y]}(M\mathscr{B}) \leq M \g_{[X,Y]}(\mathscr{B})$.
\end{lemma}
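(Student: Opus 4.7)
The plan is a direct two-step chain: move the scalars onto the vectors by linearity, apply the $\g$-boundedness of $\mathscr{B}$ to the resulting vectors, and then use the Kahane contraction principle to pull out the factor $M^2$. No estimates depending on $\delta$, $\theta$, or the SDE machinery enter here; the lemma is genuinely a geometric statement about Gaussian sums in $X$ and $Y$.

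Concretely, I would fix $N\geq 1$, vectors $x_1,\dots,x_N\in X$, operators $B_1,\dots,B_N\in\mathscr{B}$, and scalars $a_1,\dots,a_N\in[-M,M]$. Linearity of $B_n$ gives $(a_nB_n)x_n=B_n(a_nx_n)$, so $\g$-boundedness of $\mathscr{B}$ applied to the sequence $(a_nx_n)_{n=1}^N$ yields
$$ \E\Big\n\sum_{n=1}^N \gamma_n(a_nB_n)x_n\Big\n_Y^2 \leq \g_{[X,Y]}(\mathscr{B})^2\,\E\Big\n\sum_{n=1}^N \gamma_n a_nx_n\Big\n_X^2. $$
Next, the Kahane contraction principle for Gaussian sums, applied to the scalars $(a_n)\subset[-M,M]$, gives
$$ \E\Big\n\sum_{n=1}^N \gamma_n a_nx_n\Big\n_X^2 \leq M^2\,\E\Big\n\sum_{n=1}^N \gamma_n x_n\Big\n_X^2. $$
Combining the two displays delivers $\g_{[X,Y]}(M\mathscr{B})\leq M\,\g_{[X,Y]}(\mathscr{B})$, which is the claim.

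There is no serious obstacle; the only thing worth flagging is that the version of the contraction principle required here is the Gaussian one (not the Rademacher one), since the definition of $\g$-boundedness used in the paper is with respect to Gaussian multipliers. This form is standard and follows from a short conditioning argument: introducing an independent Gaussian copy $\gamma_n'$ of $\gamma_n$, one has the distributional equality $M\gamma_n\stackrel{d}{=} a_n\gamma_n+\sqrt{M^2-a_n^2}\,\gamma_n'$, and applying Jensen's inequality to the conditional expectation given $\sigma((\gamma_n)_n)$ produces the asserted bound.
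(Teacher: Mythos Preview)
Your proof is correct and is exactly the argument the paper has in mind: the paper does not spell out a proof but simply states that the lemma is a direct consequence of the Kahane contraction principle, which is precisely the route you take. Your remark on the Gaussian (as opposed to Rademacher) form of the contraction principle is a welcome clarification.
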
\par
The following proposition, which is a variation of a result of Weis \cite[Proposition 2.5]{Weis01}, gives a sufficient condition for $\g$-boundedness.
\begin{proposition}\label{prop:integr-der}
Let $f:[0,T]\rightarrow \calL(X,Y)$
be a function such that for all $x\in X$ the function $t\mapsto f(t)x$ is
continuously differentiable. Suppose $g\in L^1(0,T)$ is such that for all $t\in (0,T)$:
\begin{align*}
\n \tfrac{d}{dt}f(t)x \n_Y & \leq g(t)\n x\n_X,\quad \textrm{for all } x\in X.
\end{align*}
Then the set $\mathscr{R}: = \{f(t): \ t\in (0,T)\}$ is
$\g$-bounded in $\mathscr{L}(X,Y)$ and
$$ \g_{[X,Y]}(\mathscr{R}) \le \n f(0+)\n + \n g\n_{L^1(0,T)}.$$
\end{proposition}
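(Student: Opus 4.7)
The plan is to reduce the claim to the Kahane contraction principle by representing $f(t)$ as the integral of its (strong) derivative. Since for each $x\in X$ the map $t\mapsto f(t)x$ is continuously differentiable on $(0,T)$ with derivative dominated in $Y$-norm by $g(t)\n x\n_X$, and since $g\in L^1(0,T)$, the Bochner fundamental theorem of calculus yields
\begin{equation*}
f(t)x \;=\; f(0+)x + \int_0^T \mathbf{1}_{[0,t]}(s)\,\tfrac{d}{ds}f(s)x\,ds \qquad (t\in (0,T),\ x\in X).
\end{equation*}

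Next, fix $N\ge 1$, $t_1,\dots,t_N\in (0,T)$, $x_1,\dots,x_N\in X$, and a standard Gaussian sequence $(\g_n)_{n=1}^N$. Multiplying the displayed identity by $\g_n$, summing over $n$, and moving $\tfrac{d}{ds}f(s)$ outside the sum in $n$ (by linearity) gives
\begin{equation*}
\sum_{n=1}^N \g_n f(t_n)x_n \;=\; f(0+)\Big(\sum_{n=1}^N \g_n x_n\Big) + \int_0^T \tfrac{d}{ds}f(s)\Big(\sum_{n=1}^N \g_n \mathbf{1}_{[0,t_n]}(s)x_n\Big)\,ds.
\end{equation*}
Taking $L^2(\Omega;Y)$-norms, applying Minkowski's integral inequality, and combining the pointwise operator bound $\n\tfrac{d}{ds}f(s)\n_{\calL(X,Y)}\le g(s)$ with the trivial identity $\sum_n \g_n T x_n = T\sum_n \g_n x_n$ for any $T\in\calL(X,Y)$, I obtain
\begin{equation*}
\Big(\E\Big\n \sum_n \g_n f(t_n)x_n\Big\n_Y^2\Big)^{1/2} \le \n f(0+)\n \, A + \int_0^T g(s)\,\Big(\E\Big\n \sum_n \g_n \mathbf{1}_{[0,t_n]}(s)\,x_n\Big\n_X^2\Big)^{1/2} ds,
\end{equation*}
where $A:=(\E\n\sum_n \g_n x_n\n_X^2)^{1/2}$. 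Finally, since $|\mathbf{1}_{[0,t_n]}(s)|\le 1$, the Kahane contraction principle eliminates the scalar coefficients inside the second Gaussian sum, and the $s$-integral collapses to $\n g\n_{L^1(0,T)}\,A$. This is exactly the asserted estimate on $\g_{[X,Y]}(\mathscr{R})$.

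The only nontrivial ingredient is the justification of the vector-valued FTC together with the interchange of the Gaussian sum and the Bochner integral; both are immediate from the hypotheses since $s\mapsto\tfrac{d}{ds}f(s)x$ is continuous in $Y$ (hence strongly measurable) and dominated in norm by the integrable function $g(s)\n x\n_X$, so no additional regularity of $f$ as an operator-valued map is required.
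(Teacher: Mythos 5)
Your proof is correct: the representation $f(t)x=f(0+)x+\int_0^T \mathbf{1}_{[0,t]}(s)\tfrac{d}{ds}f(s)x\,ds$, followed by Minkowski's integral inequality, the pointwise bound $\n\tfrac{d}{ds}f(s)\n_{\calL(X,Y)}\le g(s)$, and the Kahane contraction principle applied to the coefficients $\mathbf{1}_{[0,t_n]}(s)$, is exactly the standard argument for this statement. The paper itself gives no proof (it cites Weis), and your argument coincides with the one in that reference, so there is nothing further to compare.
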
\par
The following $\g$-multiplier result, due to Kalton
and Weis \cite{KalWei07} (see also \cite{Nee-survey}),
establishes a relation between stochastic integrability
and $\g$-bounded\-ness.
\begin{proposition}[$\gamma$-Multiplier theorem]\label{prop:KW}
Suppose $X$ does not contain a closed subspace
isomorphic to $c_0$.
Suppose $M:(0,T)\to \calL(X,Y)$ is a
strongly measurable function
with $\gamma$-bounded range $\mathcal{M}=\{M(t): \ t\in (0,T)\}$.
If $\Phi\in  \g(0,T,H;X)$ then $M\Phi\in \g(0,T,H;Y)$ and:
$$ \n M\Phi\n_{\g(0,T,H;Y)} \leq \g_{[X,Y]}(\mathcal{M})\,
\n \Phi\n_{\g(0,T,H;X)}.$$
\end{proposition}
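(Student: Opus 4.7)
The plan is to prove the estimate by reducing to simple multipliers $M$ and finite-rank $\Phi$, then passing to the limit in both arguments.

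\emph{Step 1 (simple $M$).} Suppose $M=\sum_{k=1}^N 1_{A_k}\otimes B_k$ with $(A_k)_{k=1}^N$ a disjoint measurable partition of $(0,T)$ and $B_k\in\mathcal{M}$. Define $M\Phi\in\calL(L^2(0,T;H),Y)$ by $(M\Phi)f:=\sum_k B_k\Phi(1_{A_k}f)$. Pick, for each $k$, an orthonormal basis $(h_{k,i})_{i\geq 1}$ of $L^2(A_k;H)$; by disjointness of the $A_k$, the concatenation $(h_{k,i})_{k,i}$ is an orthonormal basis of $L^2(0,T;H)$, and $(M\Phi)h_{k,i}=B_k\Phi(h_{k,i})$. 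Let $(\gamma_{k,i})$ be an associated independent standard Gaussian family. Applying the definition of $\g$-boundedness of $\mathcal{M}$ to the doubly-indexed Gaussian sum — each $B_k$ being allowed to appear once for every index $i$ — yields, for every $m\geq 1$,
\begin{align*}
\E\Bigl\|\sum_{k=1}^{N}\sum_{i=1}^{m}\gamma_{k,i}B_k\Phi(h_{k,i})\Bigr\|_Y^2 \leq \g_{[X,Y]}(\mathcal{M})^2\,\E\Bigl\|\sum_{k=1}^{N}\sum_{i=1}^{m}\gamma_{k,i}\Phi(h_{k,i})\Bigr\|_X^2.
\end{align*}
Letting $m\to\infty$ gives $\|M\Phi\|_{\g(0,T,H;Y)}\leq \g_{[X,Y]}(\mathcal{M})\,\|\Phi\|_{\g(0,T,H;X)}$ by the very definition of the $\g$-radonifying norm.

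\emph{Step 2 (general $M$, finite-rank $\Phi$).} Strong measurability of $M$ yields essentially separable range, so that $M$ may be approximated pointwise a.e.\ by simple functions $M_n:(0,T)\to\mathcal{M}$ in the style of the Bochner approximation theorem. Since each $M_n$ takes values in $\mathcal{M}$, Step 1 applies uniformly and gives $\sup_n\|M_n\Phi\|_{\g(0,T,H;Y)}\leq \g_{[X,Y]}(\mathcal{M})\,\|\Phi\|_{\g(0,T,H;X)}$ for every finite-rank $\Phi\in L^2(0,T;H)\otimes X$. Dominated convergence then guarantees $M_n\Phi\to M\Phi$ strongly in $\calL(L^2(0,T;H),Y)$ on such $\Phi$, so the uniform bound persists in the limit.

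\emph{Step 3 (general $\Phi$) and main obstacle.} A general $\Phi\in\g(0,T,H;X)$ is approximated in $\g$-norm by finite-rank operators $\Phi_n\in L^2(0,T;H)\otimes X$ (this density holds by the very definition of $\g(0,T,H;X)$ as the completion of the algebraic tensor product under the $\g$-norm). The estimate of Step 2, applied to $\Phi_n-\Phi_m$, shows that $(M\Phi_n)_n$ is Cauchy in $\g(0,T,H;Y)$; denote its limit by $\Psi$. Passing to the limit in the uniform bound already yields $\|\Psi\|_{\g(0,T,H;Y)}\leq \g_{[X,Y]}(\mathcal{M})\,\|\Phi\|_{\g(0,T,H;X)}$. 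The principal technical obstacle is to identify $\Psi$ unambiguously as $M\Phi$, independently of the approximating sequence; this identification rests on a Fatou-type property for the $\g$-norm, and it is here that the hypothesis that $X$ contains no isomorphic copy of $c_0$ enters decisively. Concretely, $c_0$-avoidance allows every $\g$-radonifying operator into $X$ to be realised as an almost surely convergent Gaussian series (a theorem of Hoffmann--J{\o}rgensen--Kwapie\'n type), and it is precisely this representation that renders the diagonal approximation consistent.
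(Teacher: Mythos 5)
The paper does not prove Proposition \ref{prop:KW}; it is quoted from Kalton--Weis \cite{KalWei07} (see also \cite{Nee-survey}), so there is no in-paper proof to compare against. Your architecture --- simple multipliers via a partition-adapted orthonormal basis and repeated operators in the Gaussian sum, then pointwise approximation of $M$, then density of finite-rank $\Phi$ --- is exactly the standard proof of the cited result, and Step 1 is essentially correct.

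The genuine gap is in Step 2, in the sentence ``so the uniform bound persists in the limit.'' Strong convergence $M_n\Phi\to M\Phi$ in $\calL(L^2(0,T;H),Y)$ together with $\sup_n\n M_n\Phi\n_{\gamma}\le C$ is precisely the hypothesis of the $\gamma$-Fatou lemma, and that lemma only yields that $M\Phi$ is $\gamma$-\emph{summing}, i.e.\ $M\Phi\in\gamma_\infty(L^2(0,T;H),Y)$ with $\n M\Phi\n_{\gamma_\infty}\le C$; it does \emph{not} place $M\Phi$ in $\gamma(0,T,H;Y)$, the closure of the finite-rank operators. (Note that for non-simple $M$ the operator $M\Phi$ is not finite rank even when $\Phi$ is, so this is not a vacuous distinction, and $\gamma$ is not closed under such limits in general --- the counterexamples live in $c_0$.) The passage from $\gamma_\infty$ to $\gamma$ is exactly where the no-copy-of-$c_0$ hypothesis must be invoked, via the Hoffmann--J{\o}rgensen--Kwapie\'n theorem: if the target space contains no isomorphic copy of $c_0$, then every $\gamma$-summing operator is $\gamma$-radonifying. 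You instead attach the $c_0$ hypothesis to Step 3, to the identification of the abstract limit $\Psi$ with $M\Phi$, and you state the underlying theorem incorrectly: every $\gamma$-radonifying operator into \emph{any} Banach space is represented by an a.s.\ convergent Gaussian series (It\^o--Nisio); the content of the $c_0$ condition is the converse implication from bounded Gaussian partial sums ($\gamma$-summing) to convergence ($\gamma$-radonifying). The identification in Step 3 is in fact unproblematic without any $c_0$ assumption, since $\gamma$- (and $\gamma_\infty$-) convergence dominates convergence in $\calL(L^2(0,T;H),Y)$, where the limit is pinned down by Step 2; for operators $\Phi$ that are not functions, $M\Phi$ is simply \emph{defined} by this density extension. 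So the ingredients are all present in your write-up, but the $c_0$ hypothesis is deployed at the wrong step, for the wrong purpose, and with a wrong statement of the theorem it encodes; as written, Step 2 does not establish membership of $M\Phi$ in $\gamma(0,T,H;Y)$.
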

\begin{remark}
The assumption that $X$ should not contain a copy of $c_0$ can be avoided, provided one replaces
$\g(0,T,H;X)$ by $\g_\infty(L^2(0,T;H),X)$, the space of all $\g$-summing operators
from $L^2(0,T;H)$ to $X$. We refer to \cite{Nee-survey} for more details.
In all applications in this paper, $X$ is a \textsc{umd} space and therefore
does not contain a copy of $c_0$.
\end{remark}\par
Finally, we recall the following $\gamma$-boundedness estimate for analytic semigroups (see e.g.\ \cite[Lemma
4.1]{NVW08}).
\begin{lemma}\label{lem:analyticRbound} Let $X$ be a Banach space and let $A$ be the generator of an analytic
$C_0$-semigroup $S$ of type $(\omega,\theta,K)$ on $X$.
Then for all $0\leq \delta<\alpha$ and $T>0$ there exists a constant $C$ depending on $S$ only in terms of $\omega$, $\theta$, and $K$, such that for all $t\in (0,T]$ the set
$\mathscr{S}_{\alpha,t} = \{s^\alpha S(s): \ s\in [0,t]\}$ is $\g$-bounded
in $\calL(X,X_\delta^A)$ and we have $$\gamma_{[X,X_\delta^A]}(\mathscr{S}_{\alpha,t})\leq C t^{\alpha-\delta}, \quad t\in (0,T].$$
\end{lemma}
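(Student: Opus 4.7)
The plan is to apply Proposition \ref{prop:integr-der} to the operator-valued function $f:[0,t]\to \calL(X,X_\delta^A)$ given by $f(s)=s^\alpha S(s)$ (with $f(0)$ interpreted as the zero operator). This reduces the $\gamma$-boundedness statement to an $L^1$-bound on the derivative, with the $t^{\alpha-\delta}$ factor coming out of the integration of a power function.

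First I would check that for each $x\in X$ the map $s\mapsto f(s)x$ is continuously differentiable on $(0,t]$ as a function into $X_\delta^A$. Since $S$ is analytic, $S(s)x\in D(A^n)$ for every $n$ when $s>0$, so $f(s)x\in X_\delta^A$ and
\begin{equation*}
\tfrac{d}{ds}\bigl(s^\alpha S(s)x\bigr)=\alpha s^{\alpha-1}S(s)x + s^\alpha AS(s)x.
\end{equation*}
Using \eqref{analyticSG-Est} with exponent $\delta$ and $\delta+1$ respectively, together with $AS(s)\in\calL(X,X_\delta^A)$ with $\|AS(s)\|_{\calL(X,X_\delta^A)}=\|S(s)\|_{\calL(X,X_{\delta+1}^A)}\lesssim s^{-(1+\delta)}e^{\omega s}$, I obtain
\begin{equation*}
\bigl\|\tfrac{d}{ds}f(s)\bigr\|_{\calL(X,X_\delta^A)}\le g(s):=C\,s^{\alpha-1-\delta}e^{\omega s},
\end{equation*}
where $C$ depends on $\omega,\theta,K$ only (via Remark \ref{r:analcont}). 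Since $\alpha>\delta$, we have $g\in L^1(0,t)$ and $\|g\|_{L^1(0,t)}\le C'e^{\omega T}t^{\alpha-\delta}$ for $t\in(0,T]$.

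Next I would verify that $f(0+)=0$ in $\calL(X,X_\delta^A)$: by \eqref{analyticSG-Est}, $\|s^\alpha S(s)\|_{\calL(X,X_\delta^A)}\lesssim s^{\alpha-\delta}e^{\omega s}\to 0$ as $s\downarrow 0$. Proposition \ref{prop:integr-der} then yields
\begin{equation*}
\gamma_{[X,X_\delta^A]}(\mathscr{S}_{\alpha,t}) \le \|f(0+)\|+\|g\|_{L^1(0,t)}\le C''\,t^{\alpha-\delta},
\end{equation*}
with $C''$ depending on $S$ only through $\omega,\theta,K$ (and on $T$, $\alpha$, $\delta$). There is no real obstacle; the main thing to be careful about is keeping track of the two $s$-powers in the differentiated expression and making sure the bound $\alpha>\delta$ is exactly what guarantees integrability of $s^{\alpha-1-\delta}$ at the origin, which is the sharp condition for the lemma.
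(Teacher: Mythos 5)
Your argument is correct and is exactly the standard proof of this lemma: the paper itself does not prove it but cites \cite[Lemma 4.1]{NVW08}, where the same reduction to the Weis derivative criterion (Proposition \ref{prop:integr-der}) with $g(s)\eqsim s^{\alpha-1-\delta}e^{\omega s}$ is used. The only cosmetic point is that $\n AS(s)\n_{\calL(X,X_\delta^A)}$ is not literally equal to $\n S(s)\n_{\calL(X,X_{\delta+1}^A)}$ (one has $(\lambda I-A)^{\delta}A = (\lambda I-A)^{\delta}(\lambda I - (\lambda I -A)) $, so it is bounded by $\n S(s)\n_{\calL(X,X_{\delta+1}^A)}+|\lambda|\,\n S(s)\n_{\calL(X,X_{\delta}^A)}$), which still gives the bound $\lesssim s^{-(1+\delta)}e^{\omega s}$ and does not affect the conclusion.
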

Note that the constant $C$ in the lemma above \emph{may} depend on $T$.

\subsection{Estimates for (stochastic) convolutions}
In this section we provide the estimates for (stochastic) convolutions necessary to derive the perturbation result given in Theorem \ref{t:app}. In order to avoid confusion further on, we shall use $Y_1$ and $Y_2$ to denote \textsc{umd} Banach spaces in this section. \par
The following lemma is proven in \cite{coxNeer:11}. It is an adaptation of \cite[Proposition 4.5]{NVW08}.
\begin{lemma}\label{lem:h1}
Let $(R,\mathcal{R},\mu)$ be a finite measure space and $(S,\mathcal{S},\nu)$ a
$\sigma$-finite measure space.   
Let $\Phi_1:[0,T]\times \Omega \rightarrow \calL(H,Y_1)$, let $\Phi_2\in
L^1(R;\calL(Y_1,Y_2))$, 
and let  $f\in L^{\infty}(R\times [0,T];L^2(S))$. If $\Phi_1$ is
$L^p$-stochastically integrable for some $p\in (1,\infty)$, then
\begin{align*}
& \Big\n  s\mapsto \int_{0}^{T} \int_R f(r,u)(s)\Phi_2(r)\Phi_1(u)\, d\mu(r) 
\,dW_H(u)\Big\n_{L^p(\Omega;\gamma(S;Y_2))}\\
& \qquad \qquad\lesssim  \esssup_{(r,u)\in R\times
[0,T]} \n f(r,u)\n_{L^2(S)} \n \Phi_2 \n_{L^1(R,\calL(Y_1,Y_2))}  \n \Phi_1
\n_{L^p(\Omega;\gamma(0,T;H,Y_1))},
\end{align*}
with implied depending only on  $p$, $Y_1$, $Y_2$, provided the right-hand side
is finite.
\end{lemma}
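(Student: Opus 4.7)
The plan is to first use a stochastic Fubini argument to move the deterministic $\mu$-integration inside the stochastic integral, and then to bound the resulting fixed-$r$ estimate via a $\gamma$-Fubini / ideal property argument on the integrand of the stochastic integral. Concretely, since $\Phi_2 \in L^1(R;\calL(Y_1,Y_2))$ and $\Phi_1$ is $L^p$-stochastically integrable, a stochastic Fubini theorem rewrites the left-hand side as
\begin{align*}
\int_0^T \int_R f(r,u)(s) \Phi_2(r) \Phi_1(u)\, d\mu(r)\, dW_H(u) = \int_R \Phi_2(r) \Bigl[ \int_0^T f(r,u)(s) \Phi_1(u)\, dW_H(u) \Bigr] d\mu(r).
\end{align*}
Applying the triangle inequality for the $\gamma(S;Y_2)$-norm (which is a Banach norm, so Minkowski-type inequalities apply) to the outer $\mu$-integral, then pulling $\Phi_2(r)$ out of the $\gamma$-norm by the ideal property, and finally using Minkowski in $L^p(\Omega)$, the problem reduces to proving, for each fixed $r\in R$, the estimate
\begin{align*}
\Big\| s \mapsto \int_0^T f(r,u)(s)\, \Phi_1(u)\, dW_H(u) \Big\|_{L^p(\Omega;\gamma(S;Y_1))}
\lesssim_p \esssup_{u} \|f(r,u)\|_{L^2(S)} \, \|\Phi_1\|_{L^p(\Omega;\gamma(0,T,H;Y_1))}.
\end{align*}

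For this fixed-$r$ estimate I would apply the Burkholder-Davis-Gundy inequality \eqref{BDG}, but now with the target space $\gamma(S;Y_1)$ instead of $Y_1$. This converts the left-hand side into the $L^p(\Omega)$-norm of $\|u \mapsto f(r,u)(\cdot)\Phi_1(u)\|_{\gamma(0,T,H;\gamma(S;Y_1))}$. The key tool at this point is the isometric $\gamma$-Fubini isomorphism $\gamma(\calH_1;\gamma(\calH_2;Y_1)) \cong \gamma(\calH_1 \otimes \calH_2;Y_1)$ with $\calH_1 = L^2(0,T;H)$ and $\calH_2 = L^2(S)$, which identifies the integrand operator with the composition $\Phi_1 \circ M_f$, where the multiplication-and-integration operator $M_f \colon L^2((0,T)\times S;H) \to L^2(0,T;H)$ is defined by $(M_f g)(u) = \int_S f(r,u)(s)\, g(u,s)\, ds$. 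A direct Cauchy-Schwarz estimate yields $\|M_f\|_{\calL} \leq \esssup_u \|f(r,u)\|_{L^2(S)}$, and the right ideal property of $\gamma$-radonifying operators then gives $\|\Phi_1 \circ M_f\|_{\gamma} \leq \|\Phi_1\|_{\gamma}\|M_f\|$, which is exactly what we need.

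The main obstacles I foresee are technical rather than conceptual: first, justifying the stochastic Fubini in the opening step requires checking joint measurability and integrability so that the iterated integrals coincide; second, the $\gamma$-Fubini isomorphism needs to be invoked in the correct direction, with careful attention to the fact that one side is a $Y_1$-valued random variable while the other is an operator between Hilbert-valued $L^2$ spaces. Once these identifications are made precise, the remaining estimates are routine applications of Minkowski's inequality and of the ideal property of $\gamma$-norms, together with the fact that $\Phi_2$ only enters through its $L^1(R;\calL(Y_1,Y_2))$-norm.
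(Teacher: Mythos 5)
Your proposal is correct, and there is nothing in the paper to compare it against: the paper does not prove Lemma~\ref{lem:h1} but defers to \cite{coxNeer:11}, describing the result as an adaptation of \cite[Proposition 4.5]{NVW08}. Your route --- Burkholder--Davis--Gundy in the target space $\gamma(S;Y_1)$, the $\gamma$-Fubini identification $\gamma(L^2(0,T;H);\gamma(L^2(S);Y_1))\simeq\gamma(L^2((0,T)\times S;H);Y_1)$, the right ideal property applied to $\Phi_1\circ M_f$ with $\n M_f\n\leq\esssup_u\n f(r,u)\n_{L^2(S)}$, and Minkowski plus the left ideal property to dispose of the $\mu$-integral and of $\Phi_2$ --- is precisely the standard argument behind that cited proposition, so you are reconstructing essentially the intended proof. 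Two small points worth making explicit: the application of \eqref{BDG} in $\gamma(S;Y_1)$ requires that this space be \textsc{umd}, which holds because it is isometric to a closed subspace of $L^2(\Omega';Y_1)$; and the opening stochastic Fubini interchange, while justifiable by the integrability bound your own $M_f$-estimate provides, can be avoided altogether by applying BDG directly to the original integrand $u\mapsto\int_R f(r,u)(\cdot)\Phi_2(r)\Phi_1(u)\,d\mu(r)$ and running the triangle inequality and ideal properties inside the $\gamma(0,T,H;\gamma(S;Y_2))$-norm pathwise.
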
\par
To our knowledge,
most regularity results for stochastic convolutions are based on the factorization method introduced in
\cite{DaKwaZab87}. The result below is merely based on the regularity of the convolving functions.
\begin{lemma}\label{lem:stochConv}
Let $T>0$, $p\in [1,\infty)$ and $\eta> 0$. Suppose the process $\Phi\in L^p(\Omega;\gamma(0,T,H;Y_1))$ is
adapted to $(\calF_t)_{t\geq 0}$ and satisfies:
\begin{align*}
\sup_{0\leq t\leq T}\n s\mapsto (t-s)^{-\eta}\Phi(s)\n_{L^p(\Omega;\gamma(0,t,H;Y_1))}<\infty.
\end{align*}\par
Let $\Psi:[0,T]\rightarrow \calL(Y_1,Y_2)$ be such that $\Psi x$ is continuously differentiable on $(0,T)$ for all $x\in
Y_1$. Suppose moreover there exists a $g\in L^{1}(0,T)$ and $0 \leq  \theta < \eta$ such that
\begin{align*}
  v^\theta \n \tfrac{d}{dv}\Psi(v) x\n_{Y_2} + \theta v^{\theta-1} \n  \Psi(v) x\n_{Y_2}  & \leq g(v)\n x\n_{Y_1}, \quad \textrm{for all } x\in Y_1.
\end{align*}
Then the stochastic convolution process $$t\mapsto \int_{0}^{t}\Psi(t-s)\Phi(s)\,dW_H(s)$$ is well-defined and
\begin{align*}
& \Big\n t\mapsto \int_{0}^{t} \Psi(t-s)\Phi(s)\,dW_H(s) \Big\n_{C^{\eta-\theta}([0,T];L^p(\Omega;Y_2))} \\
& \qquad \qquad  \leq  2\bar{C}_p \n g \n_{L^{1}(0,T)}\sup_{0\leq t\leq T}\n s\mapsto
(t-s)^{-\eta}\Phi(s)\n_{L^p(\Omega;\gamma(0,t,H;Y_1))},
\end{align*}
where $\bar{C}_p$ is the constant in the Burkholder-Davis-Gundy inequality for the $p^{\textrm{\scriptsize{th}}}$ moment, for the space $Y_1$, see equation \eqref{BDG}.
\end{lemma}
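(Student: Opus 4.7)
Since $V(0) = 0$, I would focus on the seminorm part of $\|V\|_{C^{\eta-\theta}([0,T];L^p(\Omega;Y_2))}$. For $0 \le u < t \le T$, use the standard decomposition
\[
V(t) - V(u) \;=\; \underbrace{\int_0^u [\Psi(t-s)-\Psi(u-s)]\Phi(s)\,dW_H(s)}_{I_1} \;+\; \underbrace{\int_u^t \Psi(t-s)\Phi(s)\,dW_H(s)}_{I_2},
\]
and bound each term in $L^p(\Omega;Y_2)$ by something of order $(t-u)^{\eta-\theta}\|g\|_{L^1}K$, where $K := \sup_t\|s\mapsto (t-s)^{-\eta}\Phi(s)\|_{L^p(\Omega;\gamma(0,t,H;Y_1))}$. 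Both pieces are treated by the same strategy: apply the Burkholder-Davis-Gundy inequality \eqref{BDG} to reduce the $L^p(\Omega;Y_2)$-norm to a $\gamma$-norm, then invoke the Kalton-Weis multiplier theorem (Proposition \ref{prop:KW}) to extract a $\gamma$-bounded operator-valued multiplier, leaving a scalar weight on $\Phi$ that combines with $K$.

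The decisive $\gamma$-boundedness estimate concerns $m(v) := v^\eta \Psi(v) = v^{\eta-\theta}\tilde\Psi(v)$, where $\tilde\Psi(v) := v^\theta \Psi(v)$. Rewriting the hypothesis, $\tilde\Psi$ satisfies $\|\tilde\Psi'(v)\|_{\calL(Y_1,Y_2)} \le g(v)$ and $\|\tilde\Psi(v)\|_{\calL(Y_1,Y_2)} \le \theta^{-1} v\, g(v)$; differentiating $m$ and using both estimates yields $\|m'(v)\|_{\calL(Y_1,Y_2)} \lesssim v^{\eta-\theta}g(v)$. Because $\eta > \theta$ strictly, $m(v) \to 0$ as $v \to 0+$ (the boundedness of $\tilde\Psi$ near the origin follows from $\tilde\Psi(0+)$ existing by Cauchy, since $\|\tilde\Psi'\| \le g \in L^1$). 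Proposition \ref{prop:integr-der}, applied on $(0,r]$, then yields
\[
\gamma_{[Y_1,Y_2]}\bigl(\{m(v): v\in (0,r]\}\bigr) \;\lesssim\; \int_0^r v^{\eta-\theta}g(v)\,dv \;\le\; r^{\eta-\theta}\|g\|_{L^1(0,T)}.
\]

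For $I_2$, I would apply this with $r = t-u$, using Proposition \ref{prop:KW} with multiplier $s \mapsto m(t-s)$ acting on $s \mapsto (t-s)^{-\eta}\Phi(s)$ on $(u,t)$; restriction to a subinterval does not increase the $\gamma$-norm, so the latter is bounded by $K$, giving $\|I_2\|_{L^p(\Omega;Y_2)} \lesssim \bar C_p (t-u)^{\eta-\theta}\|g\|_{L^1}K$. For $I_1$, use $\Psi(t-s) - \Psi(u-s) = \int_u^t \Psi'(v-s)\,dv$ and the stochastic Fubini theorem to rewrite $I_1 = \int_u^t \int_0^u \Psi'(v-s)\Phi(s)\,dW_H(s)\,dv$; for each $v \in (u,t)$ apply BDG and Kalton-Weis to the inner stochastic integral with multiplier $s \mapsto (v-s)^\eta \Psi'(v-s)$, whose $\gamma$-bound follows from the identity $v^\eta \Psi'(v) = m'(v) - \eta v^{\eta-1}\Psi(v)$ (each summand having operator norm $\lesssim v^{\eta-\theta}g(v)$). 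Integrating in $v$ via Minkowski's inequality produces the same $(t-u)^{\eta-\theta}$ factor, and summing yields the claim.

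The main obstacle is arranging the factorization $\Psi(v) = v^{-\eta}\cdot m(v)$ (rather than $v^{-\theta}\tilde\Psi(v)$) so that the resulting $\gamma$-bounded multiplier vanishes at the origin; this exploits $\eta > \theta$ strictly and eliminates any boundary term from Proposition \ref{prop:integr-der}, producing a constant depending only on $\|g\|_{L^1}$ and matching the claimed bound. A secondary technical point is the stochastic Fubini step in the $I_1$ estimate, whose integrability prerequisites are routine to verify from the assumed weighted $\gamma$-integrability of $\Phi$.
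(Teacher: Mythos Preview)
Your decomposition $V(t)-V(u) = I_1 + I_2$ matches the paper's, and your treatment of $I_2$ via the multiplier $m(v)=v^\eta\Psi(v)$ and Proposition~\ref{prop:integr-der} is correct (and in fact cleaner than the paper's argument for that piece, though your constant picks up an extra factor $\eta/\theta$ rather than the stated $\bar C_p$). However, your argument for $I_1$ has a genuine gap.

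After Fubini you need, for each fixed $v\in(u,t)$, the $\gamma$-bound of the family
\[
\bigl\{(v-s)^{\eta}\Psi'(v-s):s\in(0,u)\bigr\}=\bigl\{w^{\eta}\Psi'(w):w\in(v-u,v)\bigr\}
\]
in order to apply Proposition~\ref{prop:KW}. You argue that ``each summand has operator norm $\lesssim w^{\eta-\theta}g(w)$'' via the identity $w^{\eta}\Psi'(w)=m'(w)-\eta w^{\eta-1}\Psi(w)$, but a pointwise operator-norm bound does \emph{not} imply a $\gamma$-bound in a general Banach space; that implication holds only in Hilbert spaces. Proposition~\ref{prop:integr-der} cannot help either, since applying it to $w\mapsto w^{\eta}\Psi'(w)$ would require a bound on $\Psi''$, which is not assumed. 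So the Kalton--Weis step for $I_1$ is not justified.

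The paper avoids this difficulty by choosing the Fubini substitution differently. Writing $\Psi(t-s)-\Psi(u-s)=\int_{u-s}^{t-s}\Psi'(w)\,dw$ and exchanging the $w$- and $s$-integrals gives
\[
I_1=\int_0^{t}\Psi'(w)\Bigl(\int_{(u-w)\vee 0}^{(t-w)\wedge u}\Phi(s)\,dW_H(s)\Bigr)\,dw.
\]
The point is that now, for each fixed $w$, the operator $\Psi'(w)$ is \emph{constant} in the stochastic integration variable $s$, so it can be pulled out of the inner integral by linearity and estimated by its operator norm $\|\Psi'(w)\|\le w^{-\theta}g(w)$; no $\gamma$-boundedness is needed. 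One then applies BDG to the inner $Y_1$-valued stochastic integral and uses that the length of the $s$-interval is at most $(t-u)\wedge w$, which combined with the weighted-$\gamma$ hypothesis on $\Phi$ produces the factor $[(t-u)\wedge w]^{\eta}$; the elementary inequality $w^{-\theta}[(t-u)\wedge w]^{\eta}\le (t-u)^{\eta-\theta}$ then closes the estimate. Your substitution $v$ with integrand $\Psi'(v-s)$ destroys precisely this constancy and creates the unnecessary $\gamma$-boundedness obstacle.
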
\par
Before proving the Lemma, observe that the corollary below follows directly from Kolmogorov's continuity criterion
(see Theorem I.2.1 in Revuz and Yor).
\begin{corollary}\label{cor:stochConv}
Let the setting be as in Lemma \ref{lem:stochConv} and assume in addition that $\inv{p}<\eta-\theta$. Let $0<\beta
< \eta-\theta-\inv{p}$. There exists a modification of the
stochastic convolution process $t\mapsto \int_{0}^{t}\Psi(t-s)\Phi(s)\,dW_H(s)$, which we shall denote by
$\Psi\diamond \Phi$, such that:
\begin{align*}
& \n \Psi\diamond \Phi \n_{L^p(\Omega;C^{\beta}([0,T];Y_2))} \\
& \qquad \qquad\leq \tilde{C} \n g \n_{L^{1}(0,T)}\sup_{0\leq t\leq T}\n
s\mapsto
(t-s)^{-\eta}\Phi(s)\n_{L^p(\Omega;\gamma(0,t,H;Y_1))},
\end{align*}
where $\tilde{C}$ depends only on $\eta$, $\beta$ and $p$ and $\bar{C}_p$.\par
\end{corollary}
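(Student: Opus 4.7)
The plan is to derive the corollary as a direct application of Kolmogorov's continuity criterion to the bound already provided by Lemma \ref{lem:stochConv}.

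First, set
\[
X(t) := \int_{0}^{t} \Psi(t-s)\Phi(s)\,dW_H(s), \qquad t\in[0,T],
\]
and let
\[
K := 2\bar{C}_p \n g \n_{L^{1}(0,T)}\sup_{0\leq t\leq T}\n s\mapsto (t-s)^{-\eta}\Phi(s)\n_{L^p(\Omega;\gamma(0,t,H;Y_1))}
\]
be the right-hand side appearing in Lemma \ref{lem:stochConv}. That lemma then says precisely that for all $0\le s<t\le T$,
\[
\bigl(\E\n X(t)-X(s)\n_{Y_2}^{p}\bigr)^{1/p}\le K\,(t-s)^{\eta-\theta},
\]
i.e., $\E\n X(t)-X(s)\n_{Y_2}^{p}\le K^{p}(t-s)^{1+p(\eta-\theta-\tinv{p})}$. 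Under the standing assumption $\tinv{p}<\eta-\theta$ the exponent $q:=\eta-\theta-\tinv{p}$ is strictly positive, which is exactly the hypothesis needed to invoke a (Banach-valued) Kolmogorov continuity theorem.

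Next, I would cite the quantitative form of Kolmogorov's criterion (as in Theorem I.2.1 of Revuz--Yor, or its Banach-valued analogue): given the moment estimate above, for every $\beta\in(0,q)$ there exists a modification $\Psi\diamond\Phi$ of $X$ whose paths lie in $C^{\beta}([0,T];Y_2)$ almost surely, together with the quantitative bound
\[
\bigl\| \Psi\diamond\Phi \bigr\|_{L^p(\Omega;C^{\beta}([0,T];Y_2))}\le C(\beta,p,q)\,K,
\]
where $C(\beta,p,q)$ depends only on the indicated parameters. Unwinding the definition of $K$ and absorbing $2\bar{C}_p$ and $C(\beta,p,q)$ into a single constant $\tilde{C}=\tilde{C}(\eta,\beta,p)$ yields the claimed inequality.

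The argument is essentially mechanical; there is no real obstacle beyond invoking the correct Banach-valued version of Kolmogorov, since continuous paths a.s.\ combined with the quantitative moment estimate immediately give membership in $L^p(\Omega;C^{\beta}([0,T];Y_2))$ with the stated bound. The only point worth being careful about is that Kolmogorov yields a modification rather than improving the original process, which is why the statement introduces the new symbol $\Psi\diamond\Phi$ to denote the modification.
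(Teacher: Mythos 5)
Your proposal is correct and is essentially the paper's own argument: the paper obtains the corollary by applying the quantitative (Banach-space valued) Kolmogorov continuity criterion of Revuz--Yor, Theorem I.2.1, to the increment estimate of Lemma \ref{lem:stochConv}, with exactly the exponent bookkeeping $\E\n X(t)-X(s)\n^p\le K^p(t-s)^{1+p(\eta-\theta-\tinv{p})}$ that you carry out. No gaps.
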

\begin{proof}[Proof of Lemma \ref{lem:stochConv}]
By Proposition \ref{prop:integr-der} and assumption it follows that $\{s^{\theta}\Psi(s):s\in [0,T] \}$ is
$\gamma$-bounded. Thus by the Kalton-Weis multiplier Theorem (see Proposition \ref{prop:KW}), and the fact that
$$\sup_{0\leq t\leq T}\n s\mapsto (t-s)^{-\eta}\Phi(s)\n_{L^p(\Omega;\gamma(0,t,H;Y_1))}<\infty,$$ it follows that
$s\mapsto \Psi(t-s)\Phi(s)1_{s\in[0,t]}\in L^p(\Omega;\gamma(0,t,H;Y_2))$ for all $t\in [0,T]$. By Theorem \ref{t:stochint} this process
is stochastically integrable.\par
In what follows we let $\frac{d}{dv}$ denote the derivative with respect to the strong operator topology. By the triangle inequality we have:
\begin{equation}
\begin{aligned}\label{hoelderLp}
&\Big\n \int_{0}^{t} \Psi(t-u)\Phi(u)\,dW_H(u)  -  \int_{0}^{s} \Psi(s-u)\Phi(u)\,dW_H(u) \Big\n_{L^p(\Omega;Y_2)}\\
& \qquad \qquad \leq \Big\n \int_{0}^{s} [\Psi(t-u)-\Psi(s-u)]\Phi(u)\,dW_H(u) \Big\n_{L^p(\Omega;Y_2)} \\
& \qquad \qquad \quad + \Big\n \int_{s}^{t} \Psi(t-u)\Phi(u)\,dW_H(u) \Big\n_{L^p(\Omega;Y_2)}\\
& \qquad\qquad  = \Big\n \int_{0}^{s} \int_{s-u}^{t-u}\tfrac{d}{dv}\Psi(v) \,dv \Phi(u)\,dW_H(u) \Big\n_{L^p(\Omega;Y_2)}\\
& \qquad \qquad \quad + \Big\n \int_{s}^{t} (t-u)^{-\theta}\int_{0}^{t-u}\tfrac{d}{dv}[v^{\theta}\Psi(v) ]\,dv \Phi(u)\,dW_H(u)
\Big\n_{L^p(\Omega;Y_2)}.
\end{aligned}
\end{equation}
We now wish to apply the stochastic Fubini theorem (see \cite[Lemma 2.7]{CoxGor:11}, \cite{NV:06}). Consider $\Upsilon:[0,s]\times [0,t]\rightarrow \calL(H,Y)$ defined by $\Upsilon(u,v)= 1_{\{s-u\leq v\leq t-u\}}\tfrac{d}{dv}\Psi(v)\Phi(u)$. As $\tfrac{d}{dv}\Psi$ is strongly continuous and $\Phi$ is $H$-strongly measurable, we have that $\Upsilon$ is $H$-strongly measurable. Moreover, as $\Phi$ is adapted it follows that $\Upsilon_{v}:=\Upsilon(\cdot,v)$ is adapted for almost all $v\in [0,t]$. Finally, we have that $\Upsilon\in L^1(0,t;\gamma(0,s,H;Y_2))$ by assumption:
\begin{align*}
\n \Upsilon(\cdot,v)\n_{\gamma(0,s,H;Y_2)} \leq v^{\eta-\theta}g(v)\n u\mapsto (s-u)^{-\eta}\Phi(u)\n_{\gamma(0,s,H;Y_1)},
\end{align*}
where we use that $v\geq s-u$ on $\textrm{supp}(\Upsilon)$.\par
Note that stochastic Fubini theorem in \cite[Lemma 2.7]{CoxGor:11}, \cite{NV:06} requires $\Upsilon_v$ to be progressive. However, it suffices to assume that $\Upsilon_v$ is adapted, see \cite{Ver:11}.
Thus the conditions necessary to apply the stochastic Fubini theorem are satisfied, and we have:
\begin{equation}\label{mainReg1}
\begin{aligned}
&\Big\n \int_{0}^{s} \int_{s-u}^{t-u} \tfrac{d}{dv}[\Psi(v)\Phi(u)]\,dv\,dW_H(u) \Big\n_{L^p(\Omega;Y_2)}\\
& \quad = \Big\n \int_{0}^{t} \int_{(s-v)\maxsym 0}^{(t-v)\minsym s}\tfrac{d}{dv}[\Psi(v)\Phi(u)]\,dW_H(u)
\,dv\Big\n_{L^p(\Omega;Y_2)}\\
& \quad \leq \int_0^{t} v^{-\theta} g(v) \Big\n \int_{(s-v)\maxsym 0}^{(t-v)\minsym s} \Phi(u)\,dW_H(u)
\Big\n_{L^p(\Omega;Y_1)}\,dv\\
& \quad \leq \bar{C}_p \int_0^{t} v^{-\theta} g(v) \big\n 1_{[(s-v)\maxsym 0,(t-v)\minsym s]}\Phi
\big\n_{L^p(\Omega;\gamma(0,t; Y_1))}\,dv\\
& \quad \leq \bar{C}_p \int_{0}^{t} v^{-\theta} g(v) [(t-s)\minsym v]^{\eta}\big\n u\mapsto ([((t-v)\minsym
s)-u]^{-\eta}\Phi(u))\big\n_{L^p(\Omega;\gamma(0,t; Y_1))}\,dv\\
& \quad \leq \bar{C}_p (t-s)^{\eta-\theta} \int_{0}^{t} g(v)\,dv \sup_{t\in [0,T]}\big\n u\mapsto
(t-u)^{-\eta}\Phi(u)\big\n_{L^p(\Omega;\gamma(0,t; Y_1))}.
\end{aligned}
\end{equation}
For the final term in \eqref{hoelderLp} one may also check that the conditions of the stochastic Fubini hold and thus:
\begin{align*}
&  \Big\n \int_{s}^{t} (t-u)^{-\theta}\int_{0}^{t-u}\tfrac{d}{dv}[v^{\theta}\Psi(v)\Phi(u)]\,dv\,dW_H(u)
\Big\n_{L^p(\Omega;Y_2)}\\
& \qquad \qquad \leq \int_{0}^{t-s} g(v) \Big\n \int_{s}^{t-v} (t-u)^{-\theta}\Phi(u)\,dW_H(u) \Big\n_{L^p(\Omega;Y_1)}\,dv\\
& \qquad \qquad \leq \bar{C}_p \int_{0}^{t-s}  g(v) \big\n u\mapsto 1_{[s,t-v]}(u) (t-u)^{-\theta}\Phi(u)
\big\n_{L^p(\Omega;\gamma(0,t,H;F_1))}\,dv\\
& \qquad \qquad \leq \bar{C}_p (t-s)^{\eta-\theta} \n g\n_{L^1(0,T)} \sup_{t\in [0,T]}\big\n u\mapsto
(t-u)^{-\eta}\Phi(u)\big\n_{L^p(\Omega;\gamma(0,t; Y_1))}.
\end{align*}
By inserting the two estimates above in \eqref{hoelderLp} we obtain that
\begin{align*}
&\Big\n \int_{0}^{t} \Psi(t-u)\Phi(u)\,dW_H(u) - \int_{0}^{s} \Psi(s-u)\Phi(u)\,dW_H(u) \Big\n_{L^p(\Omega;Y_2))}\\
& \qquad\qquad  \leq 2\bar{C}_p(t-s)^{\eta-\theta}\n g\n_{L^1(0,T)} \sup_{t\in [0,T]}\big\n u\mapsto
(t-u)^{-\eta}\Phi(u)\big\n_{L^p(\Omega;\gamma(0,t; Y_1))},
\end{align*}
which completes the proof as $0\leq s<t\leq T$ where chosen arbitrarily.
\end{proof}
\begin{remark}
In the setting of the lemma above one may also take $g\in L^{q'}(0,T)$ and $\Phi$ such that $$\int_{0}^{T}\big\n
s \mapsto (t-s)^{-\eta}\Phi(s)\big\n_{L^p(\Omega;\gamma(0,t; Y_1))}^q dt<\infty,$$ where $q\in [1,\infty]$;
$\inv{q}+\inv{q'}=1$. In that case one obtains:
\begin{align*}
& \Big\n t\mapsto \int_{0}^{t} \Psi(t-s)\Phi(s)\,dW_H(s) \Big\n_{C^{\eta-\theta}([0,T];L^p(\Omega;Y_2))} \\
& \qquad \qquad \leq (3+2^{\eta})\bar{C}_p \n g \n_{L^{q'}(0,T)} \Big(\int_{0}^{T}\big\n
s\mapsto (t-s)^{-\eta}\Phi(s)\big\n_{L^p(\Omega;\gamma(0,t,H;Y_1))}^{q}\,dt\Big)^{\inv{q}}.
\end{align*}
We omit the proof because it requires significantly more space, and we do not need this result in what follows (for the extended proof, see \cite[Lemma A.9]{cox:thesis}).
\end{remark}\par
Based on the above two lemmas, we obtain the following result for stochastic convolutions in the $\Vpc{\alpha}$-norm:
\begin{proposition}\label{prop:stochConvV}
Let the setting be as in Lemma \ref{lem:stochConv} and assume in addition that $\inv{p}< \eta-\theta$. Let $\alpha\in [0,\inv{2})$.
Then $\Psi\diamond \Phi\in \Vpc{\alpha}([0,T]\times\Omega;Y_2)$. Moreover, there exists a constant $C$ such that for all $T_0\in [0,T]$ we have:
\begin{align*}
\n \Psi\diamond \Phi \n_{\Vpc{\alpha}([0,T_0]\times\Omega;Y_2)} & \leq C  \n g \n_{L^{1}(0,T)} \sup_{0\leq t\leq
T_0}\n s\mapsto (t-s)^{-\eta}\Phi(s)\n_{L^p(\Omega;\gamma(0,t;Y_1))}.
\end{align*}
\end{proposition}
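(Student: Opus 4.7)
My approach is to split the $V^{\alpha,p}_c$-norm into its two constituent parts and treat them separately. For the $L^p(\Omega; C([0,T_0]; Y_2))$ part, the hypothesis $\tfrac{1}{p} < \eta - \theta$ is exactly what is needed to apply Corollary \ref{cor:stochConv} with some H\"older exponent $\beta > 0$; the embedding $C^\beta([0,T_0];Y_2) \hookrightarrow C([0,T_0];Y_2)$ then gives continuity of $\Psi \diamond \Phi$ together with the required bound by $\n g\n_{L^1(0,T)}\sup_t \n(t-\cdot)^{-\eta}\Phi\n_{L^p(\Omega;\gamma)}$.

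For the weighted $\gamma$-norm part, I must show, for every $t \in [0, T_0]$, a bound of the form
$$
\n s \mapsto (t-s)^{-\alpha}(\Psi\diamond\Phi)(s)\n_{L^p(\Omega;\gamma(0,t;Y_2))} \leq C \n g\n_{L^1(0,T)}\sup_{0\leq\tau\leq T_0}\n s\mapsto (\tau-s)^{-\eta}\Phi(s)\n_{L^p(\Omega;\gamma(0,\tau;Y_1))}.
$$
Following the strategy of Lemma \ref{lem:stochConv}, I would rewrite $\Psi(s-u) = (s-u)^{-\theta}\int_0^{s-u}\tfrac{d}{dw}[w^\theta \Psi(w)]\,dw$ (the integrand has operator norm at most $g(w)$ by hypothesis), and apply the stochastic Fubini theorem of \cite{NV:06,CoxGor:11} to obtain
$$
(\Psi\diamond\Phi)(s) = \int_0^s \tfrac{d}{dw}[w^\theta\Psi(w)]\int_0^{s-w}(s-u)^{-\theta}\Phi(u)\,dW_H(u)\,dw.
$$
Inserting the $(t-s)^{-\alpha}$ weight and taking $L^p(\Omega;\gamma(0,t;Y_2))$-norms in $s$, I would use Minkowski's integral inequality to exchange the $\gamma$-norm with the $dw$-integral, the ideal property of $\gamma$-radonifying operators together with $\|\tfrac{d}{dw}[w^\theta\Psi(w)]\|_{\calL(Y_1,Y_2)}\leq g(w)$ to factor the operator out of the $\gamma$-norm, and finally the Burkholder-Davis-Gundy equivalence (Theorem \ref{t:stochint}) together with the hypothesis on $\Phi$ to bound the resulting inner stochastic-integral $\gamma$-norm pointwise in $w$.

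The main technical obstacle is the accounting of the three weights: the outer $(t-s)^{-\alpha}$ living in the $\gamma(0,t)$-variable $s$, the $(s-u)^{-\theta}$ arising from the $v^\theta$-renormalization of $\Psi$, and the $(\,\cdot\, - u)^{-\eta}$ profile required by the hypothesis on $\Phi$. Their combination must produce a $w$-integrable majorant matching $g(w)$; this requires a Beta-function-type estimate using both $\alpha < \tfrac{1}{2}$ and the relations $\theta < \eta$ and $\tfrac{1}{p} < \eta - \theta$. A secondary, more routine, concern is justifying Minkowski's inequality for $\gamma$-valued Bochner integrals and confirming that the stochastic Fubini hypotheses of \cite{NV:06,CoxGor:11} are met (adaptedness and $L^1$-integrability of the $\gamma$-norm in $w$); uniformity in $T_0$ follows automatically once this pointwise-in-$t$ estimate is in hand, since all relevant weighted $\gamma$-norms are taken over sub-intervals of $[0,T]$.
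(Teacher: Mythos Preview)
Your treatment of the $L^p(\Omega;C([0,T_0];Y_2))$ part via Corollary \ref{cor:stochConv} is exactly what the paper does.

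For the weighted $\gamma$-norm there is a gap. After Minkowski in $w$ and the ideal property you are left with
\[
\Big\n s\mapsto 1_{[w,t]}(s)\,(t-s)^{-\alpha}\int_0^{s-w}(s-u)^{-\theta}\Phi(u)\,dW_H(u)\Big\n_{L^p(\Omega;\gamma(0,t;Y_1))},
\]
i.e.\ an $L^p(\Omega;\gamma(0,t;Y_1))$-norm of an $s$-parametrised family of stochastic integrals. You propose to bound this via BDG, but Theorem \ref{t:stochint} converts $L^p(\Omega;Y)$-norms of a stochastic integral into $\gamma$-norms of its integrand; it says nothing about an \emph{outer} $\gamma$-norm in a variable on which the stochastic integral itself depends (both through the upper limit $s-w$ and the weight $(s-u)^{-\theta}$). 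Making this step rigorous would require either reinterpreting the whole object as a $\gamma(0,t;Y_1)$-valued stochastic integral in $u$ and invoking $\gamma$-Fubini, or a tool that directly handles the structure ``$\gamma$-norm in $s$ of a stochastic integral in $u$ with a scalar kernel linking them''.

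The paper supplies exactly such a tool in Lemma \ref{lem:h1}, and that is what its proof uses: one applies it with $\Phi_1(u)=(t-u)^{-\eta}\Phi(u)1_{[0,t]}(u)$, $\Phi_2(r)=\tfrac{d}{dr}[r^{\theta}\Psi(r)]$, $R=[0,t]$, and scalar kernel $f(r,u)(s)=(t-s)^{-\alpha}(s-u)^{-\theta}(t-u)^{\eta}1_{\{0\le r<s-u\}}$. The Beta-type computation you anticipate is then precisely the verification that $\esssup_{r,u}\|f(r,u)\|_{L^2(0,t)}\lesssim t^{1/2+\eta-\alpha-\theta}$, which uses only $\alpha<\tfrac12$ and $\theta<\eta$; the hypothesis $\tfrac1p<\eta-\theta$ is needed only in the continuous part, not here.
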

\begin{proof}
For the norm estimate in $L^p(\Omega;C([0,T_0];Y_2))$ we apply Corollary \ref{cor:stochConv} with $\beta>0$ such that
$0\leq \beta+\theta < \eta - \inv{p}$. For the estimate in the weighted $\gamma$-norm fix $t\in [0,T_0]$. We
apply Lemma \ref{lem:h1} with $\Phi_1(u)=(t-u)^{-\eta}\Phi(u)1_{0\leq u < t}$, $\Phi_2(r) = \frac{d}{dr}[r^{\theta}\Psi(r)]$, $R=[0,t]$
and $f(r,u)(s)= (t-s)^{-\alpha}(s-u)^{-\theta}(t-u)^{\eta}1_{0\leq r< s-u}1_{0\leq u < t}$. From Lemma \ref{lem:h1} it follows that:
\begin{align*}
& \Big\n s\mapsto (t-s)^{-\alpha}\int_{0}^{s}\Psi(s-u)\Phi(u)\,dW_H(u)\Big\n_{L^p(\Omega;\gamma(0,t;Y_2))} \\
&\qquad\qquad  \lesssim t^{\inv{2}+\eta-\alpha-\theta}\n g\n_{L^1(0,T)}\n s\mapsto
(t-s)^{-\eta}\Phi(s)\n_{L^p(\Omega;\gamma(0,t;Y_1))}.
\end{align*}
Taking the supremum over $t\in[0,T_0]$ and using that $\eta-\theta>0$ (whence $T_0^{\inv{2}+\eta-\alpha-\theta}\leq T^{\inv{2}+\eta-\alpha-\theta}$) we arrive at the desired result.
\end{proof}\par
For deterministic convolutions we have the following:
\begin{proposition}\label{prop:detHolder}
Suppose $\Phi\in L^p(\Omega;L^{\infty}(0,T;Y_1))$ for some $p\in [1,\infty)$. Let $\Psi:[0,T]\rightarrow \calL(Y_1,Y_2)$ be such that $\Psi x$ is
continuously differentiable on $(0,T)$ for all $x\in Y_1$ Suppose moreover there exists a $g\in L^{1}(0,T)$ and a $\theta\in [0,1]$ such that
for all $v\in (0,T)$ we have:
\begin{align*}
v^\theta \n \tfrac{d}{dv}\Psi(v) x\n_{Y_2} + \theta v^{\theta-1} \n \Psi(v) x\n_{Y_2}  & \leq g(v)\n x\n_{Y_1}, \quad \textrm{for all } x\in Y_1.
\end{align*}
Then there exists a constant $C$ such that for all $T_0\in [0,T]$ we have, almost surely:
\begin{align*}
\n \Psi*\Phi\n_{C^{1-\theta} ([0,T_0];Y_2)} & \leq C \n g \n_{L^1(0,T_0)} \n
\Phi\n_{L^{\infty}(0,T_0;Y_1)}.
\end{align*}
\end{proposition}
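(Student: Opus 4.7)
The strategy is to mirror the proof of Lemma \ref{lem:stochConv} step for step, replacing the stochastic integral by a Bochner integral and the Burkholder--Davis--Gundy inequality by the trivial pointwise bound $\|\Phi(u)\|_{Y_1} \leq \|\Phi\|_{L^{\infty}(0,T_0;Y_1)}$. Fixing $0 \leq s < t \leq T_0$, the natural decomposition is
\[
(\Psi * \Phi)(t) - (\Psi * \Phi)(s) = \int_0^s [\Psi(t-u) - \Psi(s-u)] \Phi(u) \, du + \int_s^t \Psi(t-u) \Phi(u) \, du,
\]
and I would estimate the two summands separately, aiming in each case for a bound of order $(t-s)^{1-\theta} \|g\|_{L^1} \|\Phi\|_\infty$.

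For the first summand, I would write $\Psi(t-u) - \Psi(s-u) = \int_{s-u}^{t-u} \tfrac{d}{dv}\Psi(v) \, dv$ and apply the hypothesis in the form $\|\tfrac{d}{dv}\Psi(v)\|_{\calL(Y_1,Y_2)} \leq v^{-\theta} g(v)$. A Fubini argument, noting that the Lebesgue measure of $\{u \in [0,s] : s-u \leq v \leq t-u\}$ is bounded by $\min(v, t-s)$, reduces the estimate to $\|\Phi\|_\infty \int_0^t v^{-\theta} g(v) \min(v, t-s) \, dv$; splitting at $v = t-s$ and crudely bounding $\min(v,t-s) \leq (t-s)^{1-\theta} v^{\theta}$ on $[0,t-s]$ and $\min(v, t-s) \leq (t-s)^{1-\theta} v^{\theta}$ on $[t-s, t]$ then gives a bound of the form $(t-s)^{1-\theta} \|g\|_{L^1} \|\Phi\|_\infty$ (up to a constant).

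For the second summand I would use the identity $\Psi(v) = v^{-\theta} \int_0^v \tfrac{d}{du}[u^{\theta}\Psi(u)] \, du$, which follows from absolute continuity of $v \mapsto v^\theta \Psi(v) x$ together with $\lim_{v\to 0^+} v^\theta \Psi(v) = 0$ (the same boundary condition tacitly used in Lemma \ref{lem:stochConv}, and automatic in the intended application to analytic semigroup differences). The hypothesis then yields $\|\Psi(v)\|_{\calL(Y_1,Y_2)} \leq v^{-\theta}\|g\|_{L^1}$, so
\[
\Big\|\int_s^t \Psi(t-u)\Phi(u)\,du\Big\|_{Y_2} \leq \|\Phi\|_\infty \|g\|_{L^1}\int_0^{t-s} v^{-\theta}\,dv = \frac{\|g\|_{L^1}\|\Phi\|_\infty}{1-\theta}(t-s)^{1-\theta}.
\]
Combining the two contributions, dividing by $(t-s)^{1-\theta}$, and taking the supremum over $0 \leq s < t \leq T_0$ yields the $C^{1-\theta}$-seminorm bound; the value $(\Psi * \Phi)(0) = 0$ handles the base-point term in the H\"older norm.

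The only genuinely subtle point, the same obstacle one meets in Lemma \ref{lem:stochConv}, is the handling of the boundary $v = 0^+$ in the representation of $\Psi(v)$: one must verify that $v^\theta \Psi(v) x \to 0$ as $v \to 0^+$ so that the fundamental theorem of calculus applies without a boundary term. Everything else is routine integration against the $L^1$-majorant $g$, and the case $\theta = 1$ degenerates to a simple uniform estimate in which only the first summand analysis is needed.
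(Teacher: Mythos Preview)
Your proposal is correct and follows essentially the same approach as the paper's own proof: the same two-term decomposition, the same Fubini/measure-of-slice argument for the first summand (bounding $v^{-\theta}\min(v,t-s)\le (t-s)^{1-\theta}$), and the same fundamental-theorem-of-calculus representation $v^{\theta}\Psi(v)=\int_0^v \tfrac{d}{du}[u^{\theta}\Psi(u)]\,du$ for the second. The paper's proof carries the same tacit reliance on the boundary condition $v^{\theta}\Psi(v)\to 0$ that you correctly flag.
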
\par
By Lemma \ref{lem:VEmbedHoelder}, with $\eps=\frac{3}{2}-\inv{\tau}T-\theta$, we obtain the following corollary:
\begin{corollary}\label{cor:detConvV}
Let the setting be as in Proposition \ref{prop:detHolder}. Assume in addition that $Y_2$ has type $\tau$, and let  $0\leq \theta < \frac{3}{2}-\inv{\tau}$. Then for $\alpha\in [0,\inv{2})$ and $p\in [1,\infty)$ there exists a constant $C$ such that for $T_0 \in [0,T]$ one has:
\begin{align*}
\n \Psi * \Phi \n_{\Vpc{\alpha}([0,T_0]\times\Omega;Y_2)} \leq C \n g \n_{L^{1}(0,T_0)} \n \Phi\n_{L^p(\Omega;L^{\infty}(0,T_0;Y_1))}.
\end{align*}
\end{corollary}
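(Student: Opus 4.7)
The plan is to reduce the corollary directly to Proposition \ref{prop:detHolder} and Lemma \ref{lem:VEmbedHoelder}. First I would apply Proposition \ref{prop:detHolder} pointwise in $\omega$: its hypotheses on $\Psi$ and on the auxiliary function $g$ are inherited verbatim from the corollary's statement, and $\Phi(\omega,\cdot)$ lies in $L^\infty(0,T_0;Y_1)$ for almost every $\omega$. This yields the almost-sure H\"older estimate
\begin{align*}
\n \Psi * \Phi \n_{C^{1-\theta}([0,T_0];Y_2)} & \le C \n g \n_{L^1(0,T_0)}\n \Phi \n_{L^\infty(0,T_0;Y_1)}.
\end{align*}

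Taking $p$-th moments on both sides (the right-hand side is a product of a deterministic factor and a random one) and using the definition of the Bochner norm on $L^p(\Omega;C^{1-\theta}([0,T_0];Y_2))$, I would obtain
\begin{align*}
\n \Psi * \Phi \n_{L^p(\Omega;C^{1-\theta}([0,T_0];Y_2))} & \le C \n g \n_{L^1(0,T_0)}\n \Phi \n_{L^p(\Omega;L^\infty(0,T_0;Y_1))}.
\end{align*}

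Now I would invoke Lemma \ref{lem:VEmbedHoelder} with the specific choice $\eps = \tfrac{3}{2} - \tfrac{1}{\tau} - \theta$, which is strictly positive by the assumption $\theta < \tfrac{3}{2} - \tfrac{1}{\tau}$. With this choice the H\"older exponent $\tfrac{1}{\tau} - \tfrac{1}{2} + \eps$ appearing in the lemma equals $1 - \theta$ exactly, so the embedding delivers
\begin{align*}
\n \Psi * \Phi \n_{\Vpc{\alpha}([0,T_0]\times\Omega;Y_2)} & \le C' \n \Psi * \Phi \n_{L^p(\Omega;C^{1-\theta}([0,T_0];Y_2))}
\end{align*}
for a constant $C'$ independent of $T_0 \in [0,T]$. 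Combining the two estimates gives the claim with constant $C C'$.

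There is essentially no obstacle here: the whole argument is a matching of exponents, and the assumption $\theta < \tfrac{3}{2} - \tfrac{1}{\tau}$ is precisely what is needed to guarantee that the H\"older regularity delivered by Proposition \ref{prop:detHolder} is high enough to feed into the embedding of Lemma \ref{lem:VEmbedHoelder}. The only thing to be slightly careful about is that the constant $C'$ from Lemma \ref{lem:VEmbedHoelder} does not blow up with $T_0$; this is built into the statement of that lemma, since it is uniform over $T_0 \in [0,T]$.
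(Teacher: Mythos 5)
Your proposal is correct and coincides with the paper's own argument: the corollary is obtained by combining Proposition \ref{prop:detHolder} with Lemma \ref{lem:VEmbedHoelder}, choosing $\eps=\tfrac{3}{2}-\tinv{\tau}-\theta$ so that the H\"older exponent $\tinv{\tau}-\tinv{2}+\eps$ matches $1-\theta$ (the paper states this choice of $\eps$ immediately before the corollary, modulo a typo). Your additional remarks on uniformity of the embedding constant in $T_0$ are consistent with the statement of Lemma \ref{lem:VEmbedHoelder}.
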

\begin{proof}[Proof of Proposition \ref{prop:detHolder}]
Observe that we have, for $0\leq s< t\leq T_0$:
\begin{equation}
\begin{aligned}\label{hoelderC}
&\Big\n \int_{0}^{t} \Psi(t-u)\Phi(u,\omega)\,du  -  \int_{0}^{s} \Psi(s-u)\Phi(u,\omega)\,du \Big\n_{Y_2} \\
& \qquad  \qquad \leq \Big\n \int_{0}^{s} \int_{s-u}^{t-u} \tfrac{d}{dv}[\Psi(v)\Phi(u,\omega)]\,dv\,du \Big\n_{Y_2}\\
& \qquad \qquad\quad + \Big\n \int_{s}^{t} (t-u)^{-\theta}\int_{0}^{t-u}\tfrac{d}{dv}[v^{\theta}\Psi(v) \Phi(u,\omega)]\,dv
\,du
\Big\n_{Y_2}.
\end{aligned}
\end{equation}
Now
\begin{equation*}
\begin{aligned}
& \Big\n \int_{0}^{s} \int_{s-u}^{t-u}  \tfrac{d}{dv}[\Psi(v)\Phi(u,\omega)]\,dv\,du \Big\n_{Y_2}\\
& \qquad \qquad \leq \int_{0}^{s} \int_{(s-v)\maxsym s}^{(t-v)\minsym s} \,du (s-v)^{-\theta}g(v) \,dv \n \Phi(\omega)\n_{L^{\infty}(0,t;Y_1)}\\
& \qquad \qquad \leq (t-s)^{1-\theta}\int_{0}^{t} g(v) \,dv \n \Phi(\omega)\n_{L^{\infty}(0,t;Y_1)},
\end{aligned}
\end{equation*}
where we used that $\int_{(s-v)\maxsym s}^{(t-v)\minsym s} \,du \leq (t-s)\minsym v$.
Furthermore, we have
\begin{equation*}
\begin{aligned}
&\Big\n \int_{s}^{t} (t-u)^{-\theta}\int_{0}^{t-u}\tfrac{d}{dv}[v^{\eps}\Psi(v)] \Phi(u,\omega)\,dv \,du \Big\n_{Y_2} \\
&\qquad \qquad \leq
(1-\theta)^{-1}(t-s)^{1-\theta}\Big\n \int_{0}^{t} g(v) \,dv \n \Phi(\omega)\n_{L^{\infty}(0,T;Y_1)}.
\end{aligned}
\end{equation*}
Inserting these two estimates in \eqref{hoelderC} completes the proof.
\end{proof}
\section{A perturbation result}\label{sec:app}
In this section we shall prove the perturbation theorem announced in the introduction. Consider \eqref{SDE} with $A$, $F$ and $G$ satisfying \MA, \MF, and \MG. Keeping in mind possible applications in approximations of
solutions to stochastic partial differential equations, we let $X_0$ be a (possibly finite-dimensional) closed subspace
of $X$. We assume there exists a bounded projection $P_0:X\rightarrow X_0$ such that $P_0(X)=X_0$. Let $i_{X_0}$
represent the canonical embedding of $X_0$ into $X$ (note however that we shall omit $i_{X_0}$ when it is clear from
the context).\par
Let $A_0$ be the generator of an
analytic $C_0$-semigroup $S_0$ on $X_0$. For $t\geq 0$ define $\tilde{S}_0\in \calL(X)$ by $
\tilde{S}_0(t):=i_{X_0}S_0(t) P_0,$ this defines a \emph{degenerate} $C_0$-semigroup, i.e., $\tilde{S}_0$
satisfies the semigroup property but $\tilde{S}_0(0)=i_{X_0}P_0$ (which is clearly not the identity unless
$X_0=X$).\par
Let $x_0\in L^p(\Omega,\calF_0;X)$ (where
$p>2$ satisfies $\inv{p}\leq \inv{2}+\theta_G$) and let $U$ be the solution to \eqref{SDE} as provided by Theorem \ref{thm:NVW08}.\par
\begin{theorem}\label{t:app}
Let $\omega\geq 0$, $\theta\in(0,\frac{\pi}{2})$ and $K>0$ be such that $A$ and $A_0$ are both of type
$(\omega,\theta,K)$. Suppose there exist $\delta\in [0,1]$ and $p\in (2,\infty)$ satisfying
$$0\leq \delta < \min\{\tfrac{3}{2}-\tinv{\tau}+\theta_F,\, \tinv{2}-\tinv{p}+\theta_G\}$$
such that for some  $\lambda_0\in \rho(A)$ we have:
\begin{align}\label{app:ass}
D_{\delta}(A,A_0):=  \n R(\lambda_0:A)-i_{X_0} R(\lambda_0:A_0)P_0 \n_{\calL(X_{\delta-1}^{A},X)} < \infty.
\end{align}
Suppose $x_0\in L^p(\Omega,\calF_0;X_{\delta}^{A})$ and $y_0 \in L^p(\Omega,\calF_0;X)$.\par
For any $\alpha\in [0,\inv{2})$ there exists a unique process $U^{(0)}\in \Vpc{\alpha}([0,T_0]\times\Omega;X_0)$
such that $s\mapsto 1_{[0,t]}S_0(t-s)P_0G(s,U^{(0)}(s))$ is stochastically integrable for all $t\in [0,T]$ and for
all $t\in [0,T]$ we have:
\begin{equation}
\begin{aligned}\label{pert.sol}
U^{(0)}(t) = S_0(t-s)P_0 y_0 & + \int_{0}^{T} S_0(t-s)P_0F(s,U^{(0)}(s))\,ds\\
& + \int_{0}^{t}
S_0(t-s)P_0 G(s,U^{(0)}(s))\,dW_H(s), \quad \textrm{a.s.}
\end{aligned}
\end{equation}
Moreover:
\begin{equation}
\begin{aligned}\label{pert_est}
& \n U - i_{X_0}U^{(0)} \n_{\Vpc{\alpha}([0,T]\times \Omega;X)} \\
& \qquad\qquad \lesssim  \n x_0 - y_0 \n_{L^p(\Omega;X)} + D_{\delta}(A,A_0)(1+\n x_0 \n_{L^p(\Omega;X_{\delta}^{A})}).
\end{aligned}
\end{equation}
The implied constant depends on $X_0$ only in terms of $\n P_0\n_{\calL(X,X_0)}$, on $A$ and $A_0$ only in terms of $1+D_{\delta}(A,A_0)$, $\omega$, $\theta$ and $K$, and on $F$ and
$G$ only in terms of their Lipschitz and linear growth constants Lip$(F)$, $\textrm{Lip}_{\gamma}(G)$, $M(F)$ and $M_{\gamma}(G)$.
\end{theorem}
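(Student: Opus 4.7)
The plan is to establish existence of $U^{(0)}$ via Theorem \ref{thm:NVW08} applied on $X_0$, and then derive the error estimate by a splitting argument combined with the (stochastic) convolution estimates from Section \ref{sec:prelim}, closed off by a Gronwall-type iteration. For existence, I would apply Theorem \ref{thm:NVW08} to the triple $(A_0, P_0 F, P_0 G)$: hypothesis \MA\ holds by assumption, while the Lipschitz continuity and linear growth of $P_0 F$ and $P_0 G$ with values in $(X_0)_{\theta_F}^{A_0}$ and $\calL(H, (X_0)_{\theta_G}^{A_0})$ respectively transfer from $F$ and $G$ via the boundedness of $P_0$ together with the fact, derivable from \eqref{app:ass}, that $P_0$ maps $X_\theta^A$ boundedly into $(X_0)_\theta^{A_0}$ for $\theta \le \delta$. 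This yields a unique $U^{(0)} \in \Vpc{\alpha}([0,T]\times\Omega;X_0)$ satisfying \eqref{pert.sol}, with the a priori bound $\n U^{(0)}\n_{\Vpc{\alpha}([0,T]\times\Omega;X_0)} \lesssim 1 + \n P_0 y_0\n_{L^p(\Omega;X_0)}$.

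For the error estimate, the starting point is the decomposition
\begin{align*}
U(t) - i_{X_0} U^{(0)}(t)
&= [S(t) - \tilde{S}_0(t)]\,x_0 + \tilde{S}_0(t)(x_0 - y_0) \\
&\quad + \int_0^t [S(t-s) - \tilde{S}_0(t-s)]\,F(s, U(s))\, ds \\
&\quad + \int_0^t \tilde{S}_0(t-s)\,[F(s, U(s)) - F(s, i_{X_0} U^{(0)}(s))]\, ds \\
&\quad + \int_0^t [S(t-s) - \tilde{S}_0(t-s)]\,G(s, U(s))\, dW_H(s) \\
&\quad + \int_0^t \tilde{S}_0(t-s)\,[G(s, U(s)) - G(s, i_{X_0} U^{(0)}(s))]\, dW_H(s).
\end{align*}
The semigroup-difference terms must be bounded by $D_\delta(A, A_0)(1 + \n x_0\n_{L^p(\Omega;X_\delta^A)})$, while the Lipschitz-correction terms should be bounded by $\n U - i_{X_0} U^{(0)}\n_{\Vpc{\alpha}([0,T_0]\times\Omega;X)}$ with a prefactor that can be made small by shrinking $T_0$; iterating on successive short subintervals using \eqref{Vtransinv} then yields the global estimate.

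The main obstacle is establishing the required regularity and $\gamma$-boundedness of the semigroup difference $S - \tilde{S}_0$; this is the content of the announced Proposition \ref{p:gbddSG}. Starting from the Dunford representation
\[
S(t) - \tilde{S}_0(t) \;=\; \frac{1}{2\pi i} \int_\Gamma e^{zt}\bigl[R(z:A) - i_{X_0} R(z:A_0) P_0\bigr]\, dz,
\]
one can use the second resolvent identity together with $\n R(z:A) - i_{X_0} R(z:A_0) P_0\n_{\calL(X_{\delta-1}^A, X)} \leq D_\delta(A,A_0)$ to transport the regularity index from $\delta - 1$ up to the values $\theta_F$ and $\theta_G$ required by the drift and diffusion, at the cost of suitable powers of $t$. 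Combined with Proposition \ref{prop:integr-der}, this provides $\gamma$-boundedness estimates of the form $\gamma_{[X_\theta^A, X]}\bigl(\{ t^\rho [S(t) - \tilde{S}_0(t)] : t \in (0, T]\}\bigr) \lesssim D_\delta(A, A_0)$ for appropriate indices $\rho$, $\theta$.

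Granting Proposition \ref{p:gbddSG}, the deterministic semigroup-difference convolution is controlled via Corollary \ref{cor:detConvV} and the stochastic one via Proposition \ref{prop:stochConvV}, in both cases producing the bound $D_\delta(A, A_0)$ times the relevant norm of $F(\cdot, U(\cdot))$ or of $G(\cdot, U(\cdot))$; these in turn are bounded by $1 + \n x_0\n_{L^p(\Omega;X_\delta^A)}$ using \MF, \MG, and the a priori estimate \eqref{UinVestimate}. The Lipschitz-correction terms are handled by \eqref{GLipschitzV2}, its analogue for $F$, and the standard semigroup bound \eqref{analyticSG-Est} applied to $\tilde{S}_0$, while the initial-data term $\tilde{S}_0(t)(x_0 - y_0)$ is bounded directly by $\n x_0 - y_0\n_{L^p(\Omega;X)}$. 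Assembling these ingredients and applying a Gronwall-type argument on short subintervals yields \eqref{pert_est}. The dependencies of the implied constant are as claimed, since $P_0$ enters only through its operator norm, $A$ and $A_0$ enter only through $(\omega, \theta, K)$ and $D_\delta(A, A_0)$, and $F$, $G$ enter only through their Lipschitz and growth constants.
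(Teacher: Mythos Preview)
Your proposal follows the paper's proof closely: the same six-term decomposition \eqref{pert.split}, the same reliance on Proposition \ref{p:gbddSG} (proved via the Dunford contour integral and the resolvent identity), the same use of Corollary \ref{cor:detConvV} and Proposition \ref{prop:stochConvV} for the convolution terms, and the same iteration over short subintervals via \eqref{Vtransinv}.

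One technical correction is needed in Part 1. Your claim that $P_0$ maps $X_\theta^A$ boundedly into $(X_0)_\theta^{A_0}$ for $\theta \le \delta$ is stronger than what actually follows from \eqref{app:ass}, and is not what the paper proves. When $\theta_F \ge 0$ (or $\theta_G \ge 0$) one simply takes $\eta_F = 0$ (resp.\ $\eta_G = 0$) and uses $P_0 \in \calL(X,X_0)$. When $\theta_F < 0$, the paper shows via the integral representation of negative fractional powers and Proposition \ref{p:gbddSG} that $P_0$ extends to a bounded map $X_\beta^A \to (X_0)_\eta^{A_0}$ only for $\eta < \beta - \delta$ (see \eqref{fracspaceEmbed}), i.e.\ with a loss of $\delta$ in the index. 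This still suffices, because the hypothesis $\delta < \tfrac{3}{2} - \tfrac{1}{\tau} + \theta_F$ guarantees room to choose $\eta_F \in (-\tfrac{3}{2} + \tfrac{1}{\tau},\, \theta_F - \delta)$, and similarly for $G$. With this adjustment your outline is correct and coincides with the paper's argument.
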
\par
To prove Theorem \ref{t:app} we need a proposition concerning the $\gamma$-boundedness of $S-\tilde{S}_0$. The proof of
this proposition is postponed to the end of this section.
\begin{proposition} \label{p:gbddSG}
Let $A$, $A_0$ be as introduced above, i.e., $A$ generates an analytic semigroup on $X$ and $A_0$ generates an analytic
semigroup on $X_0$. Let $\omega\geq 0, \theta\in (0,\frac{\pi}{2})$ and $K>0$ be such that $A$ and $A_0$ are of type
$(\omega,\theta,K)$. Suppose there exists a $\lambda_0\in \C$, $\mathscr{R}e(\lambda_0) > \omega$, and $\delta\in \R$
such that $D_{\delta}(A,A_0)<\infty$, where $D_{\delta}(A,A_0)$ is as defined in \eqref{app:ass}. Set $$\omega'=\omega+|\lambda_0-\omega|(\cos\theta)^{-1}.$$ Then for all
$\beta \in \R$ such that $\beta\in [\delta-1,\delta]$ one has:
\begin{align}\label{p:gbddsg:unif1}
\sup_{t\in [0,\infty)} t^{\delta-\beta} e^{-\omega' t} \n S(t) - \tilde{S}_0(t)\n_{\calL(X_{\beta}^{A},X)} & \lesssim D_{\delta}(A,A_0),
\end{align}
and
\begin{align}\label{p:gbddsg:unif2}
\sup_{t\in [0,\infty)} t^{\delta-\beta+1}e^{-\omega' t} \n \tfrac{d}{dt}S(t)-\tfrac{d}{dt}\tilde{S}_0(t) \n_{\calL (X_{\beta}^{A},X)}
\lesssim D_{\delta}(A,A_0),
\end{align}
with implied constants depending only on $\n P_0\n_{\calL(X,X_0)}$, $\omega$, $\theta$, $K$, $\delta-\beta$.\par
Moreover, for all $\alpha > \delta-\beta$ we have, for $t\in [0,T]$:
\begin{align*}
\gamma_{[X_{\beta}^{A},X]}\left(\left\{  s^{\alpha} [ S(s) - \tilde{S}_0(s)]; \, 0\leq s\leq t\right\}\right) &
\lesssim t^{\alpha+\beta-\delta} D_{\delta}(A,A_0),
\end{align*}
with implied constant depending only on $\n P_0\n_{\calL(X,X_0)}$, $\omega$, $\theta$, $K$, $\delta-\beta$, and $T$.
\end{proposition}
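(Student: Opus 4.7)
\noindent\textit{Proof proposal.}
The plan is to use a Dunford--Taylor contour representation of $T(t):=S(t)-\tilde S_0(t)$ together with a resolvent identity that transfers the hypothesis on $E(\lambda_0):=R(\lambda_0:A)-i_{X_0}R(\lambda_0:A_0)P_0$ to $E(\lambda):=R(\lambda:A)-i_{X_0}R(\lambda:A_0)P_0$ uniformly on the common resolvent sector, then to invoke Proposition \ref{prop:integr-der} for the $\g$-boundedness statement.

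First, applying the resolvent identity to $R(\cdot:A)$ and $R(\cdot:A_0)$ separately and exploiting $P_0 i_{X_0}=I_{X_0}$, I would derive the algebraic identity
\[
E(\lambda) = \bigl[I+(\lambda_0-\lambda)R(\lambda:A)\bigr]\,E(\lambda_0)\,\bigl[I+(\lambda_0-\lambda)i_{X_0}R(\lambda:A_0)P_0\bigr],
\]
valid on $\omega+\Sigma_{\pi/2+\theta}\subseteq\rho(A)\cap\rho(A_0)$. Since both bracketed factors are uniformly bounded on this sector by Definition \ref{d:uniftype}, this gives the uniform estimate $\n E(\lambda)\n_{\calL(X_{\delta-1}^A,X)}\lesssim D_\delta(A,A_0)$. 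To upgrade this to the sharper bound $\n E(\lambda)\n_{\calL(X_\beta^A,X)}\lesssim D_\delta|\lambda-\omega|^{\delta-\beta-1}$ required below for $\beta\in[\delta-1,\delta]$, I would interpolate between the endpoints $\beta=\delta-1$ (the uniform bound just obtained) and $\beta=\delta$: the latter uses the key observation that $\tilde R_0(\lambda_0):=i_{X_0}R(\lambda_0:A_0)P_0$ has range in $X_0$, so $(I-Q)\tilde R_0(\lambda_0)=0$ with $Q:=i_{X_0}P_0$; this yields $(I-Q)(\lambda_0-A)^{-\delta}=(I-Q)E(\lambda_0)(\lambda_0-A)^{1-\delta}$ and hence $\n I-Q\n_{\calL(X_\delta^A,X)}\lesssim D_\delta$, which combined with the resolvent interpolation $\n R(\lambda:A)\n_{\calL(X,X_\gamma^A)}\lesssim|\lambda-\omega|^{\gamma-1}$ for $\gamma\in[0,1]$ produces the $|\lambda-\omega|^{-1}$ decay at $\beta=\delta$.

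Equation \eqref{p:gbddsg:unif1} then follows from the Dunford--Taylor representation $T(t)=\tfrac{1}{2\pi i}\int_\Gamma e^{\lambda t}E(\lambda)\,d\lambda$ with $\Gamma=\partial(\omega'+\Sigma_{\pi/2+\theta'})$ via the standard change of variables $\lambda-\omega=\mu/t$, which reduces the estimate to a gamma-function type computation producing $t^{\beta-\delta}e^{\omega't}D_\delta$. Differentiating under the integral sign picks up an extra factor of $|\lambda|$ and gives \eqref{p:gbddsg:unif2}. Finally, the $\g$-boundedness in (iii) follows by applying Proposition \ref{prop:integr-der} to $f(s):=s^\alpha[S(s)-\tilde S_0(s)]$: from \eqref{p:gbddsg:unif1}--\eqref{p:gbddsg:unif2} one obtains $\n\tfrac{d}{ds}f(s)\n_{\calL(X_\beta^A,X)}\leq g(s):=CD_\delta s^{\alpha+\beta-\delta-1}e^{\omega's}$, and the assumption $\alpha>\delta-\beta$ ensures both $\n g\n_{L^1(0,t)}\lesssim D_\delta t^{\alpha+\beta-\delta}$ and $\n f(s)\n_{\calL(X_\beta^A,X)}\to 0$ as $s\to 0^+$, so Proposition \ref{prop:integr-der} delivers the claimed $\g$-bound.

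The main technical obstacle will be extracting the $|\lambda-\omega|^{\delta-\beta-1}$ decay of $\n E(\lambda)\n_{\calL(X_\beta^A,X)}$ in the second step: a naive triangle inequality on the factored identity above only recovers the uniform bound $\lesssim D_\delta$, so one must redistribute the fractional power $(\lambda_0-A)^{1-\delta}$ implicit in the hypothesis across the resolvent factors to exploit their $\lambda$-decay on the sector, invoking the endpoint identity $(I-Q)\tilde R_0(\lambda_0)=0$ to handle the case $\beta=\delta$ before interpolating.
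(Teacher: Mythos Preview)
Your overall architecture---Dunford--Taylor contour representation of $S(t)-\tilde S_0(t)$, a resolvent-difference estimate transferring control from $\lambda_0$ to the whole sector, and then Proposition~\ref{prop:integr-der} for the $\gamma$-bound---is exactly what the paper does. The gap is in the middle step, extracting the decay $\n E(\lambda)\n_{\calL(X_\beta^A,X)}\lesssim D_\delta\,|\lambda-\omega|^{\delta-\beta-1}$.

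Your factorization $E(\lambda)=[I+(\lambda_0-\lambda)R(\lambda:A)]\,E(\lambda_0)\,[I+(\lambda_0-\lambda)\tilde R_0(\lambda)]$ is algebraically correct, but the factors are on the wrong side for the estimate. The rightmost factor, which is the one acting on $X_\beta^A$, involves $\tilde R_0(\lambda)=i_{X_0}R(\lambda:A_0)P_0$, and this has no useful mapping properties between $A$-fractional spaces. Your workaround---establish the endpoints $\beta=\delta-1$ and $\beta=\delta$ separately and interpolate---has two problems. First, the $\beta=\delta$ argument via $(I-Q)$ does give $\n I-Q\n_{\calL(X_\delta^A,X)}\lesssim D_\delta$, but this only controls the piece $(I-Q)R(\lambda:A)$ of $E(\lambda)$; the remaining piece $QR(\lambda:A)-\tilde R_0(\lambda)$ is not handled, and the ``resolvent interpolation'' you cite does not obviously close the gap with the $D_\delta$ factor intact. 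Second, interpolating operator norms between $\calL(X_{\delta-1}^A,X)$ and $\calL(X_\delta^A,X)$ to reach $\calL(X_\beta^A,X)$ requires the fractional domain spaces to form a complex interpolation scale, which the paper does not assume (no bounded imaginary powers, no $H^\infty$-calculus hypothesis).

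The paper avoids both issues by swapping the order of the outer factors: it uses
\[
E(\lambda)=\bigl[I+(\lambda_0-\lambda)\tilde R_0(\lambda)\bigr]\,E(\lambda_0)\,(\lambda_0-A)R(\lambda:A),
\]
which is your identity with the roles of $A$ and $\tilde A_0$ exchanged (both orderings are valid). Now the right factor $(\lambda_0-A)R(\lambda:A)$ involves only $A$, and one estimates $\n(\lambda_0-A)R(\lambda:A)\n_{\calL(X_\beta^A,X_{\delta-1}^A)}\eqsim\n(\bar\lambda-A)^{\delta-\beta}R(\lambda:A)\n_{\calL(X)}$ directly: the endpoints $\delta-\beta\in\{0,1\}$ are immediate from the type-$(\omega,\theta,K)$ bound and \eqref{AResEst}, and intermediate values use the moment inequality (Theorem~\ref{t:fracPowInt}), which needs only the analytic semigroup structure, not interpolation space theory. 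The left factor is then just a bounded operator on $X$, and the decay $|\lambda-\omega|^{\delta-\beta-1}$ comes out in one line (this is Lemma~\ref{l:extend}). Once you have this, your contour argument and your application of Proposition~\ref{prop:integr-der} go through exactly as you wrote.
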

\begin{proof}[Proof of Theorem \ref{t:app}.]
We split the proof into several parts.
\subsection*{Part 1.}
In order to prove existence and uniqueness of $U^{(0)}\in
\Vpc{\alpha}([0,T_0]\times\Omega;X_{0})$ satisfying \eqref{pert.sol} it suffices, by Theorem
\ref{thm:NVW08}, to prove that there exist $\eta_{F}>-\frac{3}{2}+\inv{\tau}$ and $\eta_{G}>-\inv{2}+\inv{p}$ such that
$P_0 F:[0,T]\times X\rightarrow X_{0,\eta_{F}}^{A_0}$ is Lipschitz continuous and of linear growth and $P_0
G:[0,T]\times X\rightarrow
\gamma(H,X_{0,\eta_{G}}^{A_0})$ is $L^2_{\gamma}$-Lipschitz continuous and of linear growth. If $\theta_F\geq 0$ then clearly we may take
$\eta_{F}=0$, and we have Lip$(P_0F)\leq \n P_0\n_{\calL(X,X_0)}\textrm{Lip}(F)$, $M(P_0F)\leq \n P_0\n_{\calL(X,X_0)}M(F)$. The same goes for $\theta_G\geq 0$.\par
Now suppose $\theta_F<0$. Recall the following representation of negative fractional powers of an operator $A$ generating an analytic semigroup $S$ of type $(\omega,\theta,K)$ (see \cite[Chapter
2.6]{Pazy:83}):
$$ (\lambda I-A)^{\eta}=\inv{\Gamma(-\eta)}\int_0^{\infty}t^{-\eta-1}e^{-\lambda t}S(t)dt, \quad  \eta<0,\
\mathscr{R}e(\lambda)>\omega.$$
Let $\bar{\omega}>\omega'$, where $\omega'$ is as in Proposition \ref{p:gbddSG}. From the representation above and Proposition \ref{p:gbddSG} it follows that for $\beta\in [\delta-1,\delta]$,
$\eta<\beta-\delta$ and $x\in X$ we have:
\begin{align*}
& \n P_0 x \n_{X_{0,\eta}^{A_0}} \eqsim \n ((\bar{\omega} I - A_0)^{\eta}P_0 x \n_{X} = \Big\n
\inv{\Gamma(-\eta)}\int_0^{\infty}t^{-\eta-1}e^{-\bar{\omega} t}\tilde{S}_0(t) x dt\Big\n_{X}\\
&\qquad   \leq \inv{\Gamma(-\eta)}\int_0^{\infty}t^{-\eta-1}e^{-\bar{\omega} t}\n (S(t)-\tilde{S}_0(t))x\n_{X}dt  \\
&\qquad  \quad + \inv{\Gamma(-\eta)}\Big\n \int_0^{\infty}t^{-\eta-1}e^{-\bar{\omega} t}S(t) x dt \Big\n_{X}\\
&\qquad \lesssim D_{\delta}(A,A_0)\int_0^{\infty}t^{-\eta-1+\beta-\delta}e^{-(\bar{\omega}-\omega') t} dt \n x \n_{X_{\beta}^{A}} + \n (\bar{\omega} I -
A)^{\eta }x\n_X,
\end{align*}
with implied constants depending on $X_0$ only in terms of $\n P_0\n_{\calL(X,X_0)}$ and on $A$ and $A_0$ only in terms of $\omega,\theta,$ and $K$. Thus for $\beta\in [\delta-1,\delta]$, $\eta<\beta-\delta$ we have:
\begin{equation}
\begin{aligned}\label{fracspaceEmbed}
\n P_0 x \n_{X_{0,\eta}^{A_0}} & \eqsim \n (\bar{\omega} I - A_0)^{\eta}P_0 x \n_{X} \\
& \lesssim (1+D_{\delta}(A,A_0))\n (\bar{\omega} I - A)^{\beta} x \n_{X} \eqsim (1+D_{\delta}(A,A_0))\n x \n_{X_{\beta}^{A}},
\end{aligned}
\end{equation}
with implied constants depending on $X_0$ only in terms of $\n P_0\n_{\calL(X,X_0)}$ and on $A$ and $A_0$ only in terms of $\omega,\theta,$ and $K$. \par
Note that by assumption we have $\theta_F> -\frac{3}{2}+\inv{\tau} + \delta \geq \delta -1$. Hence one can pick
$\eta_{F}$ such that
$-\frac{3}{2}+\inv{\tau}<\eta_{F}<\theta_F-\delta$. By \eqref{fracspaceEmbed} it follows that $P_0 F:[0,T]\times
X\rightarrow
X_{0,\eta_{F}}^{A_0}$ is Lipschitz continuous and 
\begin{align}\label{eq:LipP0F}
\textrm{Lip}(P_0F)& \lesssim (1+D(A,A_0))\textrm{Lip}(F); & M(P_0F)\lesssim (1+D(A,A_0))M(F),
\end{align}
with implied constant depending on $X_0$ only in terms of $\n P_0\n_{\calL(X,X_0)}$ and on $A$ and $A_0$ only in terms of $\omega,\theta,$ and $K$.\par

Similarly, if
$\theta_G<0$ there exists a
$\eta_{G}$ such that $-\inv{2}+\inv{p}<\eta_{G}<\theta_G-\delta$ such that $P_0 G:[0,T]\times X\rightarrow
\gamma(H,X_{0,\eta_{G}}^{A_0})$ is $L^2_{\gamma}$-Lipschitz continuous and 
\begin{align}\label{eq:LipP0G}
\textrm{Lip}_{\gamma}(P_0G)& \lesssim (1+D(A,A_0))\textrm{Lip}_{\gamma}(G); & M_{\gamma}(P_0G)\lesssim (1+D(A,A_0))M_{\gamma}(G),
\end{align}
with implied constant depending on $X_0$ only in terms of $\n P_0\n_{\calL(X,X_0)}$ and on $A$ and $A_0$ only in terms of $\omega,\theta,$ and $K$.\par

\subsection*{Part 2.}
Define $\tilde{U}^{(0)} = i_{X_0}U^{(0)}$ and observe that if $U^{(0)}$ satisfies \eqref{pert.sol},
then $\tilde{U}^{(0)}$ satisfies:
\begin{align*}
\tilde{U}^{(0)}(t) = \tilde{S}_0(t-s) y_0 &+ \int_{0}^{T} \tilde{S}_0(t-s)F(s,\tilde{U}^{(0)}(s))\,ds\\
& + \int_{0}^{t} \tilde{S}_0(t-s)G(s,\tilde{U}^{(0)}(s))\,dW_H(s), \quad \textrm{a.s.}
\end{align*}
Let $T_0\in [0,T]$ be fixed. By the above we have:
\begin{equation}\label{pert.split}
\begin{aligned}
& \n U - \tilde{U}^{(0)}\n_{\Vpc{\alpha}([0,T_0]\times\Omega;X)} \\
& \qquad \leq \n (S-\tilde{S}_0)x_0
\n_{\Vpc{\alpha}([0,T_0]\times\Omega;X)} + \n \tilde{S}_0(x_0-y_0) \n_{\Vpc{\alpha}([0,T_0]\times\Omega;X)}\\
& \qquad\quad + \Big\n t\mapsto \int_0^{t}\tilde{S}_0(t-s)[F(s,U(s))-F(s,\tilde{U}^{(0)}(s))]\,ds
\Big\n_{\Vpc{\alpha}([0,T_0]\times\Omega;X)}\\
& \qquad\quad + \Big\n t\mapsto \int_0^{t}[S(t-s)-\tilde{S}_0(t-s)]F(s,U(s))\,ds
\Big\n_{\Vpc{\alpha}([0,T_0]\times\Omega;X)}\\
& \qquad\quad + \Big\n t\mapsto \int_0^{t}\tilde{S}_0(t-s)[G(s,U(s))-G(s,\tilde{U}^{(0)}(s))]\,dW_H(s)
\Big\n_{\Vpc{\alpha}([0,T_0]\times\Omega;X)}\\
& \qquad\quad + \Big\n t\mapsto \int_0^{t}[S(t-s)-\tilde{S}_0(t-s)]G(s,U(s))\,dW_H(s)
\Big\n_{\Vpc{\alpha}([0,T_0]\times\Omega;X)}.
\end{aligned}
\end{equation}\par
Let $\eta_{F}$ and $\eta_{G}$ be as defined in part 1. Let $\eps>0$ be such that
\begin{align*}
\eps&\leq 1-2\alpha;\\
\eps&<\min\{\tfrac{3}{2}-\tinv{\tau}+\eta_{F},\tinv{2}-\tinv{p}+\eta_{G}\}.
\end{align*}
It follows that $\eps + \delta < \min\{\tfrac{3}{2}-\tinv{\tau}+\theta_{F},\tinv{2}-\tinv{p}+\theta_{G}\}$.
By equation \eqref{Vchange-of-alpha} we may
assume, without loss of generality, that $\alpha=\inv{2}-\eps/2$.\par
We will estimate each of the six terms on the right-hand side of \eqref{pert.split} in parts 2a-2f below. In part 2c and
2e we keep track of the dependence on $T_0$, for the other parts this is not necessary.
\subsubsection*{Part 2a.}
By Proposition \ref{p:gbddSG} with $\beta=\delta$ there exists an $\calM>0$ depending on $X_0$ only in terms of $\n P_0\n_{\calL(X,X_0)}$, and on $A$ and $A_0$ only in terms
om $\omega$, $\theta$ and $K$, such that
\begin{align*}
\sup_{t\in [0,T_0]}\n S(t) - \tilde{S}_0(t)\n_{\calL(X_{\delta}^{A},X)}\leq \calM D_{\delta}(A,A_0);\\
\gamma_{[X_{\delta}^{A},X]}\{t^{\eps/2}(S(t)-\tilde{S}_0(t))\,:\,t\in[0,T_0]\}\leq \calM D_{\delta}(A,A_0).
\end{align*}
Thus by Proposition \ref{prop:KW} we have
\begin{align*}
& \n (S-\tilde{S}_0)x_0\n_{\Vpc{\alpha}([0,T_0]\times\Omega;X)} \\
& \quad \leq \calM  D_{\delta}(A,A_0)\big[\sup_{t\in[0,T_0]}\n s\mapsto
(t-s)^{-\alpha}s^{-\eps/2}x_0\n_{L^p(\Omega;\gamma(0,t;X_{\delta}^{A}))}+ \n x_0\n_{L^p(\Omega;X_{\delta}^{A})}\big].
\end{align*}
For $f\in L^2(0,t)$ and $x\in L^p(\Omega;X_{\delta}^{A})$ we have $$\n f\otimes
x\n_{L^p(\Omega;\gamma(0,t;X_{\delta}^{A}))}=\n f\n_{L^2(0,t)}\n x\n_{L^p(\Omega;X_{\delta}^{A})}.$$ Thus, recalling
that $\alpha=\inv{2}-\eps/2$, we have:
\begin{align*}
&\sup_{t\in[0,T_0]}\n s\mapsto
(t-s)^{-\alpha}s^{-\eps/2}x_0\n_{L^p(\Omega;\gamma(0,t;X_{\delta}^{A}))} \\
&\qquad \qquad  \leq \n s\mapsto
(1-s)^{-\alpha}s^{-\eps/2} \n_{L^2(0,1)} \n x_0\n_{L^p(\Omega;X_{\delta}^{A})} \leq C_{\eps} \n x_0\n_{L^p(\Omega;X^A_{\delta})},
\end{align*}
where $C_{\eps}$ is a constant depending only on $\eps$, and we used that $\alpha=\inv{2}-\eps/2$.
Hence
\begin{equation}\label{pert.h1}
\begin{aligned}
\n (S-\tilde{S}_0)x_0\n_{\Vpc{\alpha}([0,T_0]\times\Omega;X)} & \leq \calM D_{\delta}(A,A_0)(1+C_{\eps})\n
x_0\n_{L^p(\Omega;X_{\delta}^{A})}.
\end{aligned}
\end{equation}
\par
\subsubsection*{Part 2b.}
By assumption (see Remark \ref{r:analcont}) there exists an $\calM$ depending only on $\n P_0\n_{\calL(X,X_0)}$, $\omega,\theta,$ and $K$ and $T$ such that
we have that $\sup_{t\in [0,T]}\n \tilde{S}_0(t)\n_{\calL(X,X_0)} \leq \calM$. Moreover, by Lemma \ref{lem:analyticRbound}
we may pick $\calM$ such that in addition we have that $\gamma_{[X,X]}\{t^{\eps/2}\tilde{S}_0(t)\,:\,t\in[0,T_0]\}\leq
\calM.$ Thus by the same argument as in part 2a we have:
\begin{equation}\label{pert.h2}
\begin{aligned}
\n \tilde{S}_0(x_0-y_0) \n_{\Vpc{\alpha}([0,T_0]\times\Omega;X)} & \leq \calM (1+C_{\eps}) \n x_0 -y_0 \n_{L^p(\Omega;X)}.
\end{aligned}
\end{equation}\par
\subsubsection*{Part 2c.}
Recall that $\eta_{F}\leq 0$. By equation \eqref{analyticSG-Est} there exists an $\calM$ depending
only on $\omega$, $\theta$, $K$ and $T$ such that for all $t\in [0,T]$ we have:
\begin{align*}
& t^{-\eta_{F}+\eps} \n \tfrac{d}{dt}S_0(t) x\n_{\calL(X_{0,\eta_{F}}^{A_0},X)} + (\eps-\eta_{F}) t^{-\eta_{F}+\eps-1} \n  S_{0}(t) x\n_{\calL(X_{0,\eta_{F}}^{A_0},X)} \\
& \qquad \qquad= t^{-\eta_{F}+\eps} \n \tfrac{d}{dt}S_0(t) x\n_{\calL(X_{0,\eta_{F}}^{A_0},X_0)} + (\eps-\eta_{F}) t^{-\eta_{F}+\eps-1} \n  S_{0}(t) x\n_{\calL(X_{0,\eta_{F}}^{A_0},X_0)} \\
& \qquad \qquad\leq \calM t^{-1+\eps}.
\end{align*}
By Corollary \ref{cor:detConvV} with $Y_1=X_{0,\eta_{F}}^{A_0}$, $Y_2=X$, $$\Phi(s) =
P_0[F(s,U(s))-F(s,\tilde{U}^{(0)}(s))],$$ $\Psi(s)=S_0(s)$, $\theta = -\eta_{F}+\eps$ and $g(v)=\calM
v^{-1+\eps}$,  it follows that:
\begin{equation}\label{pert.h3}
\begin{aligned}
&\Big\n t\mapsto \int_0^{t}\tilde{S}_0(t-s)[F(s,U(s))-F(s,\tilde{U}^{(0)}(s))]\,ds
\Big\n_{\Vpc{\alpha}([0,T_0]\times\Omega;X)}\\
& \qquad \qquad\lesssim T_0^{\eps}\n
P_0[F(\cdot,U)-F(\cdot,\tilde{U}^{(0)})]\n_{L^p(\Omega;L^{\infty}(0,T_0;X_{0,\eta_{F}}^{A_0}))}\\
& \qquad \qquad\leq T_0^{\eps}\textrm{Lip}(P_0F)\n
U-\tilde{U}^{(0)}\n_{L^p(\Omega;L^{\infty}(0,T_0;X))}\\
& \qquad \qquad\lesssim T_0^{\eps}(1+D_{\delta}(A,A_0))\textrm{Lip}(F)\n
U-\tilde{U}^{(0)}\n_{L^p(\Omega;L^{\infty}(0,T_0;X))},
\end{aligned}
\end{equation}
where the second-last estimate follows by Lipschitz-continuity of $P_0F$ and the final estimate follows by \eqref{eq:LipP0F}. Note that the implied constants are independent of $T_0$, and depend on $X_0$ only in terms of $\n P_0\n_{\calL(X,X_0)}$ and on $A_0$ only
in terms of $\omega$, $\theta$ and $K$.\par

\subsubsection*{Part 2d.}
By Proposition \ref{p:gbddSG} with $\beta= \theta_F\minsym \delta\in[\delta-1,\delta]$ we have that there exists a
constant $\calM$ depending only on $\n P_0\n_{\calL(X,X_0)}$, $\omega,\theta,K$, $(\delta-\theta_F)\maxsym 0$ and $T$ such that for all $t\in[0,T]$ we
have: 
\begin{align*}
& t^{(\delta-\theta_F)^+ +\eps}\n \tfrac{d}{dt}[S(t)-\tilde{S}_0(t)]\n_{\calL(X_{\theta_F\minsym
\delta}^{A},X)}\\
& \qquad +((\delta-\theta_F)^+ +\eps)t^{(\delta-\theta_F)^+ +\eps-1}\n S(t)-\tilde{S}_0(t)\n_{\calL(X_{\theta_F\minsym
\delta}^{A},X)}  \leq \calM D_{\delta}(A,A_0) t^{-1+\eps}.
\end{align*}
Thus by Corollary \ref{cor:detConvV} with $Y_1=X_{\theta_F\minsym
\delta}^{A}$,
$Y_2=X$, $\Phi(s)= F(s,U(s))$, $\Psi(s)= S(s)-\tilde{S}_0(s)$, $\theta= (\delta-\theta_F)^{+}+\eps$, $g(v)=\calM
D_{\delta}(A,A_0) v^{-1+\eps}$,  we obtain:
\begin{equation}\label{pert.h4}
\begin{aligned}
&\Big\n t\mapsto \int_0^{t}[S(t-s)-\tilde{S}_0(t-s)]F(s,U(s))\,ds \Big\n_{\Vpc{\alpha}([0,T_0]\times\Omega;X)}\\
&\qquad\qquad \lesssim D_{\delta}(A,A_0)\n F(\cdot,U)\n_{L^p(\Omega;L^{\infty}(0,T_0;X_{\theta_F\minsym \delta}^{A}))}\\
&\qquad \qquad\leq D_{\delta}(A,A_0)M(F)\n
U\n_{L^p(\Omega;L^{\infty}(0,T_0;X))} \\
&\qquad \qquad\lesssim D_{\delta}(A,A_0)M(F)(1+\n x_0\n_{L^p(\Omega;X)}),
\end{aligned}
\end{equation}
where the penultimate estimate follows by the linear growth condition on $F$ and the final estimate by
\eqref{UinVestimate}.
Note that the implied constants are independent of $T_0$, and depend on $X_0$ only in terms of $\n P_0\n_{\calL(X,X_0)}$, and on $A$
and $A_0$ only in terms of $\omega$, $\theta$ and $K$.\par
\subsubsection*{Part 2e.}
Recall that $\eta_{G}\leq 0$. By equation \eqref{analyticSG-Est} there exists an $\calM$ depending only on $\omega$, $\theta$, $K$ and $T$ such that for all $t\in [0,T]$ we have:
\begin{align*}
& t^{-\eta_{G}+\eps/2} \n \tfrac{d}{dt} S_0(t)\n_{\calL(X_{0,-\eta_{G}}^{A},X_0)}+ (\eps/2-\eta_{G})t^{-\eta_{G}+\eps/2-1} \n S_0(t)\n_{\calL(X_{0,-\eta_{G}}^{A},X_0)} \\
& \qquad \qquad \leq \calM t^{-1+\eps/2}.
\end{align*}
By applying Proposition \ref{prop:stochConvV} with $Y_1=X_{0,\eta_{G}}^{A_0}$, $Y_2=X$, $\Psi(s)= S_0(s)$, $\eta=\alpha$, $\alpha=\alpha$,
$\theta=-\eta_{G}+\eps/2$ and $g(v)= \calM v^{-1+\eps/2}$ and $$\Phi(s)=
P_0[G(s,U(s))-G(s,\tilde{U}^{(0)}(s))]$$ we obtain:
\begin{equation}\label{pert.h5}
\begin{aligned}
&\Big\n t\mapsto \int_0^{t}\tilde{S}_0(t-s)[G(s,U(s))-G(s,\tilde{U}^{(0)}(s))]\,dW_H(s)
\Big\n_{\Vpc{\alpha}([0,T_0]\times\Omega;X)}\\
&\qquad \lesssim T_0^{\inv{2}\eps}\sup_{0\leq t\leq T_0}\n s\mapsto
(t-s)^{-\alpha}P_0[G(s,U(s))-G(s,\tilde{U}^{(0)}(s))]\n_{L^p(\Omega;\gamma(0,t;X_{0,\eta_{G}}^{A_0}))}\\
&\qquad \lesssim T_0^{\inv{2}\eps}(1+D_{\delta}(A,A_0))\textrm{Lip}_{\gamma}(G)\n
U-\tilde{U}^{(0)}\n_{\Vpc{\alpha}([0,T_0]\times\Omega;X)},
\end{aligned}
\end{equation}
where the final estimate follows from  estimates \eqref{GLipschitzV2} and \eqref{eq:LipP0G}. Note that the implied constants are independent of $T_0$, and depend on $X_0$ only in terms of $\n P_0\n_{\calL(X,X_0)}$, and on $A$ and $A_0$ only in terms of $\omega$, $\theta$ and $K$.\par

\subsubsection*{Part 2f.}
By Proposition \ref{p:gbddSG} with $\beta= \theta_G\minsym \delta\in [\delta-1,\delta]$ we have that there exists a
constant $\calM $ depending only on $\n P_0\n_{\calL(X,X_0)}$, $\omega,\theta,K$, $(\delta-\theta_G)\maxsym 0$ and $T$ such that for all $t\in[0,T]$ we
have: 
\begin{align*}
& t^{(\delta-\theta_G)^{+}+\eps/2}\n \tfrac{d}{dt} [S(t)-\tilde{S}_0(t)]\n_{\calL(X_{\theta_G\minsym
\delta}^{A},X)}\\
& \qquad \qquad+ ((\delta-\theta_G)^{+}+\eps/2)t^{(\delta-\theta_G)^{+}+\eps/2-1}\n S(t)-\tilde{S}_0(t)\n_{\calL(X_{\theta_G\minsym
\delta}^{A},X)} \\
& \qquad \leq \calM D_{\delta}(A,A_0)t^{-1+\eps/2}.
\end{align*}
Thus by Proposition \ref{prop:stochConvV} with
$Y_1=X_{\theta_G\minsym \delta}^{A}$, $Y_2=X$, $\Phi(s)= G(s,U(s))$, $\Psi(s)= S(s)-\tilde{S}_0(s)$, $\eta=\alpha$,
$\alpha=\alpha$, $\theta=(\delta-\theta_G)^+ +\eps/2$ and $g(v)= \calM D_{\delta}(A,A_0)v^{-1+\eps/2}$ we obtain:
\begin{equation}\label{pert.h6}
\begin{aligned}
&\Big\n t\mapsto \int_0^{t}[S(t-s)-\tilde{S}_0(t-s)]G(s,U(s))\,dW_H(s) \Big\n_{\Vpc{\alpha}([0,T_0]\times\Omega;X)}\\
&\qquad\qquad \lesssim D_{\delta}(A,A_0)\sup_{0\leq t\leq T_0}\n s\mapsto
(t-s)^{-\alpha}G(s,U(s))\n_{L^p(\Omega;\gamma(0,t;X_{\theta_G\minsym \delta}^{A}))}\\
&\qquad\qquad \leq D_{\delta}(A,A_0)M_{\gamma}(G)\n
U\n_{\Vpc{\alpha}([0,T_0]\times\Omega;X)}\\
&\qquad\qquad \lesssim D_{\delta}(A,A_0)M_{\gamma}(G)\big(1+\n x_0
\n_{L^p(\Omega;X)}\big),
\end{aligned}
\end{equation}
where the penultimate line follows by estimate \eqref{GLipschitzV1}. Note that the implied constants
are independent of $T_0$, and depend on $X_0$ only in terms of $\n P_0\n_{\calL(X,X_0)}$ and on $A$ and $A_0$ only in terms of $\omega$, $\theta$ and $K$.
\subsubsection*{Part 2g.}
Inserting \eqref{pert.h1}-\eqref{pert.h6} in \eqref{pert.split} we obtain that there exists a constant
$C>0$ independent of $x_0$ and $y_0$, depending on $X_0$ only in terms of $\n P_0\n_{\calL(X,X_0)}$, on $A$ and $A_0$ only in terms of $1+D_{\delta}(A,A_0)$, $\omega$, $\theta$ and $K$, and on $F$ and
$G$ only in terms of their Lipschitz and linear growth constants Lip$(F)$, $\textrm{Lip}_{\gamma}(G)$, $M(F)$ and $M_{\gamma}(G)$, such that for all $T_0\in [0,T]$ one has:
\begin{align*}
& \n U - \tilde{U}^{(0)}\n_{\Vpc{\alpha}([0,T_0]\times\Omega;X)} \\
& \qquad \qquad \leq CT_{0}^{\inv{2}\eps}\n U - U^{(0)}\n_{\Vpc{\alpha}([0,T_0]\times\Omega;X)} \\
& \qquad \qquad \quad  + C\Big(\n x_0-y_0 \n_{L^p(\Omega;X)}+D_{\delta}(A,A_0)\big(1+\n x_0\n_{L^p(\Omega;X_{\delta}^{A})}\big)\Big).
\end{align*}
Setting $T_0= [2C]^{-2/\eps}$ we obtain:
\begin{equation}\label{T0est}
\begin{aligned}
& \n U - \tilde{U}^{(0)}\n_{\Vpc{\alpha}([0,T_0]\times\Omega;X)} \\
& \qquad \qquad \leq 2C\Big(\n x_0-y_0
\n_{L^p(\Omega;X)}+D_{\delta}(A,A_0)\big(1+\n x_0\n_{L^p(\Omega;X_{\delta}^{A})}\big)\Big).
\end{aligned}
\end{equation}

\subsection*{Part 3.}
Let $t_0\geq 0$, $z\in L^p(\Omega,\calF_{t_0};X)$, $T>0$ and $\alpha\in [0,\inv{2})$. By $U(z,t_0,\cdot)$, we denote
the (unique) process in $\Vpc{\alpha}([t_0,t_0+T]\times\Omega;X)$ satisfying, for $s\in [t_0,t_0+T]$:
\begin{align*}
U(z,t_0,s)=S(t-t_0)z& +\int_{t_0}^{t}S(t-t_0-s)F\big(U(z,t_0,s)\big)\,ds \\
&+ \int_{t_0}^{t}S(t-t_0-s)G\big(U(z,t_0,s)\big)\,dW_H(s) \quad \textrm{a.s.}
\end{align*}
The process $U^{(0)}(z,t_0,\cdot)$ is defined analogously.\par
From the proof of \eqref{T0est} it follows that for any $x\in L^p(\Omega,\calF_{t_0};X_{\delta}^{A})$ and $y\in
L^p(\Omega,\calF_{t_0};X)$ we have:
\begin{equation}\label{T0est.shift}
\begin{aligned}
&\n U(x,t_0,\cdot) - U^{(0)}(y,t_0,\cdot)\n_{\Vpc{\alpha}([t_0,t_0+T_0]\times\Omega;X)} \\
&\qquad\qquad  \leq 2C\Big(\n x-y \n_{L^p(\Omega;X)}+D_{\delta}(A,A_0)[1+\n x\n_{L^p(\Omega;X_{\delta}^{A})}]\Big),
\end{aligned}
\end{equation}
with $C$ as in \eqref{T0est}.

\subsection*{Part 4.}
Throughout this section one may check that the implied constants always depend $X_0$ only in terms of $\n P_0\n_{\calL(X,X_0)}$, on $A$ and $A_0$ only in terms of $1+D_{\delta}(A,A_0)$, $\omega$, $\theta$ and $K$, and on $F$ and $G$ only in terms of their Lipschitz and linear growth constants Lip$(F)$, $\textrm{Lip}_{\gamma}(G)$, $M(F)$ and $M_{\gamma}(G)$, even when this is not mentioned explicitly.\par
By uniqueness of the solution to \eqref{SDE} it follows that for any $0\leq s_0\leq t_0\leq t$ and any $x,y\in
L^p(\Omega,\calF_{s_0};X)$ one has:
$$ U(x,s_0,t)=U\big(U(x,s_0,t_0),t_0,t\big) \ \ \textrm{and}\ \
U^{(0)}(y,s_0,t)=U^{(0)}\big(U^{(0)}(y,s_0,t_0),t_0,t\big).$$\par
Let $j\in \N$. By the embedding $\Vpc{\alpha}([0,T_0]\times\Omega;X) \hookrightarrow L^{\infty}(0,T_0;L^p(\Omega;X))$ and
estimate
\eqref{T0est.shift} with $x=U(x_0,0,(j-1)T_0)$ and $y=U^{(0)}(y_0,0,(j-1)T_0)$, we obtain:
\begin{equation}\label{recur2}\begin{aligned}
& \n U(x_0,0,jT_0)-U^{(0)}(y_0,0,jT_0)\n_{L^p(\Omega;X)} \\
& \qquad \qquad= \Big\n U\Big(U\big(x_0,0,(j-1)T_0\big),(j-1)T_0,T_0\Big)\\
& \qquad \qquad\qquad\quad -U^{(0)}\Big(U^{(0)}\big(y_0,0,(j-1)T_0\big),(j-1)T_0,T_0\Big) \Big\n_{L^p(\Omega;X)}\\
& \qquad \qquad\lesssim \n U\big(x_0,0,(j-1)T_0\big)-U^{(0)}\big(y_0,0,(j-1)T_0\big)\n_{L^p(\Omega;X)}\\
& \qquad \qquad\quad + D_{\delta}(A,A_0)\big[1+\n U\big(x_0,0,(j-1)T_0\big)\n_{L^p(\Omega;X^{A}_{\eta})}\big],\end{aligned}
\end{equation}
with implied constant independent of $j$ and $n$ and the `initial values' $U(x_0,0,(j-1)T_0)$ and
$U^{(0)}(y_0,0,(j-1)nT_0)$.\par
By equation \eqref{UinVestimate} it follows that:
\begin{equation}
\begin{aligned}\label{supLpest}
\sup_{1\leq j\leq \lfloor T/T_0 \rfloor} \n U(x_0,0,jT_0) \n_{L^p(\Omega;X^{A}_{\eta})} & \leq \sup_{s\in [0,T]}\n
U(x_0,0,s) \n_{L^p(\Omega;X^{A}_{\eta})}\\
 &  \lesssim 1+\n x_0 \n_{L^p(\Omega;X^{A}_{\eta})}.
\end{aligned}
\end{equation}
\par
Thus for $j=1,\ldots,\lceil T/T_0 \rceil$ we obtain the following relation from \eqref{recur2}:
\begin{align*}
& \n U(x_0,0,jT_0)-U^{(0)}(y_0,0,jT_0)\n_{L^p(\Omega;X)} \\
& \qquad \qquad  \lesssim \n U(x_0,0,(j-1)T_0)-U^{(0)}(y_0,0,(j-1)T_0)\n_{L^p(\Omega;X)}\\
& \qquad \qquad \quad + D_{\delta}(A,A_0)(1+\n x_0 \n_{L^p(\Omega;X^{A}_{\eta})}).
\end{align*}
Note that $T_0$ depends on $X_0$ only in terms of $\n P_0\n_{\calL(X,X_0)}$, on $A$ and $A_0$ only in terms of $1+D_{\delta}(A,A_0)$, $\omega$, $\theta$ and $K$, and on $F$ and $G$ only in terms
of Lip$(F)$, $\textrm{Lip}_{\gamma}(G)$, $M(F)$, and
$M_{\gamma}(G)$. By induction we obtain, for $j=1,\ldots,\lceil T/T_0 \rceil$:
\begin{equation}\label{recur3}\begin{aligned}
&\n U(x_0,jT_0)-U^{(0)}(y_0,jT_0)\n_{L^p(\Omega;X)} \\
& \qquad\qquad  \lesssim \n x_0 - y_0 \n_{L^p(\Omega;X)} + D_{\delta}(A,A_0)(1+\n x_0
\n_{L^p(\Omega;X^{A}_{\eta})}).\end{aligned}
\end{equation}\par
Fix $j\in \N,$ $j< \lceil T/T_0\rceil$. Set $$x=U(x_0,0,(j-1)T_0)\quad \textrm{and} \quad
y=U^{(0)}(y_0,0,(j-1)T_0)$$ in \eqref{T0est.shift} to obtain, using \eqref{supLpest} and \eqref{recur3}:
\begin{align*}
&\big\n U\big(U(x_0,0,(j-1)T_0),(j-1)T_0,\cdot\big)\\
& \qquad \qquad \quad -U^{(0)}\big(U^{(0)}(y_0,(j-1)T_0),(j-1)T_0,\cdot \big)\big\n_{\Vpc{\alpha}([(j-1)T_0,jT_0]\times
\Omega;X)} \\
& \qquad \qquad\lesssim   \n U(x_0,0,(j-1)T_0)-U^{(0)}(y_0,0,(j-1)T_0)\n_{L^p(\Omega;X)} \\
&\qquad\qquad \quad + D_{\delta}(A,A_0) \big(1+\n U(x_0,0,(j-1)T_0) \n_{L^p(\Omega;X^{A}_{\eta})}\big)\\
& \qquad \qquad\lesssim \n x_0 - y_0 \n_{L^p(\Omega;X)} + D_{\delta}(A,A_0)(1+\n x_0 \n_{L^p(\Omega;X^{A}_{\eta})}),
\end{align*}
with implied constants independent of $j$.\par
Due to inequality \eqref{Vtransinv} we thus obtain:
\begin{align*}
&\n U(x_0,0,\cdot) - U^{(0)}(y_0,0,\cdot) \n_{\Vpc{\alpha}([0,T]\times \Omega;X)} \\
& \qquad \leq \Big\n \sum_{j=1}^{\lceil T/T_0 \rceil} \Big( U\big[U\big(x_0,0,(j-1)T_0\big),(j-1)T_0,\cdot\big]\\
& \qquad \qquad \quad -U^{(0)}\big[U^{(0)}\big(y_0,0,(j-1)T_0\big),(j-1)T_0,\cdot \big] \Big)1_{[(j-1)T_0,jT_0)}
\Big\n_{\Vpc{\alpha}([0,T]\times \Omega;X)}\\
& \qquad  \leq   \sum_{j=1}^{\lceil T/T_0 \rceil} \Big\n U\big[U\big(x_0,0,(j-1)T_0\big),(j-1)T_0,\cdot\big]\\
& \qquad \qquad \quad -U^{(0)}\big[U^{(0)}\big(y_0,0,(j-1)T_0\big),(j-1)T_0,\cdot \big]
\Big\n_{\Vpc{\alpha}([(j-1)T_0,jT_0]\times \Omega;X)}\\
&\qquad \lesssim \sum_{j=1}^{\lceil T/T_0 \rceil} \n x_0 - y_0 \n_{L^p(\Omega;X)} +D_{\delta}(A,A_0)(1+\n x_0
\n_{L^p(\Omega;X^{A}_{\eta})})\\
&\qquad \lesssim \n x_0 - y_0 \n_{L^p(\Omega;X)} + D_{\delta}(A,A_0)(1+\n x_0 \n_{L^p(\Omega;X^{A}_{\eta})}).
\end{align*}
\end{proof}\par
It remains to provide a proof for Proposition \ref{p:gbddSG}. For that purpose, we first prove the following lemma.
Given the lemma, the proof of Proposition \ref{p:gbddSG} basically follows the lines of known proofs concerning
comparison of semigroups, see \cite[Chapter III.3.b]{EngNa:00}. For
notational simplicity we define the pseudo-resolvent
\begin{align}\label{pseudoR2}
R(\lambda:\tilde{A}_0)&:=i_{X_0}R(\l:A_0)P_0,\quad \l\in \omega+\Sigma_{\frac{\pi}{2}+\theta}
\end{align}
(we leave it to the reader to verify the resolvent identity).
\begin{lemma} \label{l:extend}
Let the setting be as in Proposition \ref{p:gbddSG}. Then
for all $\l\in \omega'+\Sigma_{\frac{\pi}{2}+\theta}$ we have:
\begin{equation*}
\begin{aligned}
&\n R(\l:A)  - R(\l:\tilde{A}_0)  \n_{\calL(X^{A}_{\beta},X)} \\
& \qquad \qquad \leq C_{\omega,\theta,K,P_0} |\lambda-\omega|^{\delta-\beta-1} \n R(\lambda_0:A)-R(\l_0:\tilde{A}_0)
\n_{\calL(X_{\delta-1}^{A},X)},
\end{aligned}
\end{equation*}
where $C_{\omega,\theta,K,P_0}$ is a constant depending only on $\omega,\theta,K$ and $\n P_0\n_{\calL(X,X_0)}$.
\end{lemma}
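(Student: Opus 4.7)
The plan is to derive a ``double'' resolvent identity placing $Y:=R(\lambda_0:A)-R(\lambda_0:\tilde{A}_0)$ in the middle, flanked by two factors that carry the $\lambda$-dependence. Since $P_0 i_{X_0}=I_{X_0}$, the pseudo-resolvent $R(\cdot:\tilde{A}_0)$ defined in \eqref{pseudoR2} satisfies the usual resolvent identity. Subtracting the resolvent identity for $A$ from that for $\tilde{A}_0$ and rearranging yields
\[
R(\lambda:A)-R(\lambda:\tilde{A}_0) \;=\; \bigl[I+(\lambda_0-\lambda)R(\lambda:\tilde{A}_0)\bigr]\, Y \,\bigl[I+(\lambda_0-\lambda)R(\lambda:A)\bigr].
\]
By assumption $\n Y\n_{\calL(X^A_{\delta-1},X)}=D_\delta(A,A_0)$, so the task reduces to bounding the two outer factors in the appropriate operator norms.

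The role of the threshold $\omega'=\omega+|\lambda_0-\omega|/\cos\theta$ is to guarantee that for every $\lambda\in\omega'+\Sigma_{\frac{\pi}{2}+\theta}$ the modulus $|\lambda-\omega|$ stays bounded below and the ratio $|\lambda_0-\lambda|/|\lambda-\omega|$ stays bounded above, by constants depending only on $\omega,\theta,\lambda_0$; this is a straightforward geometric check on the shifted sector. Combined with the resolvent estimate $\n R(\lambda:\tilde{A}_0)\n_{\calL(X)}\le K\n P_0\n/|\lambda-\omega|$ coming from $A_0$ being of type $(\omega,\theta,K)$, it immediately yields $\n I+(\lambda_0-\lambda)R(\lambda:\tilde{A}_0)\n_{\calL(X)}\lesssim 1$, which disposes of the left factor.

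The crux is the sharper bound
\[
\n I+(\lambda_0-\lambda)R(\lambda:A)\n_{\calL(X^A_\beta,X^A_{\delta-1})}\ \lesssim\ |\lambda-\omega|^{\delta-\beta-1},\qquad \beta\in[\delta-1,\delta].
\]
A naive triangle-inequality bound only produces $O(1)$ and is insufficient, so I will exploit the algebraic identity $I+(\lambda_0-\lambda)R(\lambda:A)=R(\lambda:A)(\lambda_0 I-A)$ (valid on $D(A)$ and extended by closure). Decomposing this composition as $(\lambda_0 I-A):X^A_\beta\to X^A_{\beta-1}$ (uniformly bounded) followed by $R(\lambda:A):X^A_{\beta-1}\to X^A_{\delta-1}$, the task reduces to the resolvent smoothing estimate $\n R(\lambda:A)\n_{\calL(X^A_\gamma,X^A_{\gamma+\alpha})}\lesssim|\lambda-\omega|^{\alpha-1}$ for $\alpha\in[0,1]$, which I would obtain via Theorem \ref{t:fracPowInt} by interpolating between the endpoints $\alpha=0$ (the standard bound $\n R(\lambda:A)\n_{\calL(X)}\le K/|\lambda-\omega|$) and $\alpha=1$ (where $R(\lambda:A)(\mu I-A)=I+(\mu-\lambda)R(\lambda:A)$ is uniformly bounded on the sector). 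Applying this with $\gamma=\beta-1$ and $\alpha=\delta-\beta$ delivers the required estimate, and multiplying the three factor norms produces the lemma. The hard part is precisely this last smoothing step: the fractional-order interpolation is what couples the exponent $\delta-\beta-1$ in the final bound to the $\delta-1$ appearing in the definition of $D_\delta(A,A_0)$, whereas term-by-term estimation would never yield the decay in $|\lambda-\omega|$.
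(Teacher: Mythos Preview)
Your proposal is correct and follows essentially the same route as the paper. Your three-factor identity is exactly the paper's identity \eqref{resid} once one notes $I+(\lambda_0-\lambda)R(\lambda:A)=(\lambda_0-A)R(\lambda:A)$, and your interpolation step for the right factor via Theorem~\ref{t:fracPowInt} is precisely how the paper bounds $\n(\lambda_0-A)R(\lambda:A)\n_{\calL(X^A_\beta,X^A_{\delta-1})}$.
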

\begin{proof}
Using only the resolvent identity and the definition of $R(\lambda:\tilde{A}_0)$ (see \eqref{pseudoR2}) one may verify that the
following identity holds:
\begin{equation}\label{resid}
\begin{aligned}
& R(\l:A) - R(\l:\tilde{A}_0) \\
&\qquad\qquad
=[I+(\lambda_0-\lambda)R(\lambda:\tilde{A}_0)][R(\lambda_0:A)-R(\lambda_0:\tilde{A}_0)](\lambda_0-A)R(\lambda:A).
\end{aligned}
\end{equation}
Moreover, one may check that:
\begin{align*}
\omega'+\Sigma_{\frac{\pi}{2}+\theta}\subset \big(\omega+\Sigma_{\frac{\pi}{2}+\theta}\big)\bigcap \big\{\lambda\in \C:
|\lambda-\omega|\geq |\lambda_0-\omega|\big\}.
\end{align*}
Therefore one has, for $\lambda \in \omega'+\Sigma_{\frac{\pi}{2}+\theta}$, $$\n
I+ (\lambda_0-\lambda) R(\lambda:\tilde{A}_0)\n_{\calL(X)} \leq 1 +
\tfrac{|\lambda_0-\lambda|}{|\lambda-\omega|}K\n P_0\n_{\calL(X,X_0)} \leq 1+2K\n P_0\n_{\calL(X,X_0)}.$$
From \eqref{resid} one obtains:
\begin{equation}\label{Ressplit}
\begin{aligned}
& \n R(\l:A)  - R(\l:\tilde{A}_0)  \n_{\calL(X^{A}_{\beta},X)} \leq (1+2K\n P_0\n_{\calL(X,X_0)})\\
& \qquad \qquad \times \n R(\lambda_0:A)-R(\lambda_0:\tilde{A}_0) \n_{\calL(X_{\delta-1}^{A},X)} \n (\lambda_0-A)R(\lambda:A) \n_{\calL(X^{A}_{\beta},X^{A}_{\delta-1})}.
\end{aligned}
\end{equation}
Let $\bar{\lambda}\in \C$, $\Re e(\bar{\lambda})>\omega$ be such that $|\bar{\lambda}-\lambda_0 | \leq 2| \bar{\lambda} - \omega | $ (if $\Re e(\lambda_0)>\omega$ one may simply pick $\bar{\lambda}=\lambda_0$). For $\eta\in\R$ and $x\in X_{\eta}^{A}$ set $\n x_0\n_{X_{\eta}^A}:= \n (\bar{\lambda}-A)^{\eta}x\n_{X}.$ Then:
\begin{align*}
\n (\lambda_0-A)R(\lambda:A) \n_{\calL(X^{A}_{\beta},X^{A}_{\delta-1})} & = \n (\bar{\lambda}-A)^{\delta-\beta-1}(\lambda_0-A)R(\lambda:A) \n_{\calL(X)}\\
& \leq \big(1+\tfrac{|\bar{\lambda}-\lambda_0|}{|\bar{\lambda}-\omega|}K\big)\n (\bar{\lambda}-A)^{\delta-\beta}R(\lambda:A) \n_{\calL(X)}\\
& \leq (1+2K)\n (\bar{\lambda}-A)^{\delta-\beta}R(\lambda:A) \n_{\calL(X)}.
\end{align*}
\par
If $\delta-\beta =1$ then:
\begin{align*}
\n (\bar{\lambda}-A)^{\delta-\beta}R(\lambda:A) \n_{\calL(X)} & = \n (\bar{\lambda}-A)R(\lambda:A) \n_{\calL(X)}
\leq 1+2K.
\end{align*}
If $\delta-\beta=0$ then:
\begin{align*}
\n (\bar{\lambda}-A)^{\delta-\beta}R(\lambda:A) \n_{\calL(X)} = \n R(\lambda:A) \n_{\calL(X)} \leq
K|\lambda-\omega|^{-1}.
\end{align*}
For $\delta-\beta\in (0,1)$ we have, by Theorem \ref{t:fracPowInt},
\begin{align*}
\n (\bar{\lambda}-A)^{\delta-\beta}R(\lambda:A) \n_{\calL(X)}
& \leq 2(1+K)\n R(\lambda:A) \n_{\calL(X)}^{1+\beta-\delta} \n (\bar{\lambda}-A) R(\lambda:A)
\n_{\calL(X)}^{\delta-\beta}\\
& \leq 2(1+K)(1+2K)^{\delta-\beta} K^{1+\beta-\delta}|\lambda - \omega |^{\delta-\beta-1}\\
&  \leq 2(1+2K)^2|\lambda - \omega |^{\delta-\beta-1}.
\end{align*}
Substituting this into \eqref{Ressplit} one obtains:
\begin{align*}
& \n R(\l:A)  - R(\l:\tilde{A}_0)  \n_{\calL(X^{A}_{\beta},X)} \\
& \qquad \leq  2(1+2K)^4 \n P_0\n_{\calL(X,X_0)}|\lambda - \omega |^{\delta-\beta-1}\n R(\lambda_0:A)-R(\lambda_0:\tilde{A}_0)
\n_{\calL(X_{\delta -1}^{A},X)}.
\end{align*}
\end{proof}
\begin{proof}[Proof of Proposition \ref{p:gbddSG}]
Let $\omega'$ be as defined in Lemma \ref{l:extend}. For brevity set $\eps = \delta-\beta$. First of all observe that
\begin{align*}
\lim_{s\downarrow 0} s^{\eps}\n  [S(s)-\tilde{S}_0(s)] \n_{\calL(X_{\beta}^{A},X)} =0.
\end{align*}
Fix $\theta'\in (0,\theta)$. It follows from \cite[Theorem 1.7.7]{Pazy:83}, that one has, for all $t>0$:
\begin{align*}
S(t) &= \tinv{2\pi i}\int_{\omega'+\Gamma_{\theta'}} e^{\lambda t}R(\lambda:A) d\lambda;
\end{align*}
where $\Gamma_{\theta'}$ is the path composed from the two rays $r e^{i(\frac{\pi}{2}+\theta')}$ and $r
e^{-i(\frac{\pi}{2}+\theta')}$, $0\leq r <\infty$, and is oriented such that $\Im m(\lambda)$ increases along
$\Gamma_{\theta'}$. As $\omega'\geq \omega$, the integral is well-defined as $\calL(X)$-valued Bochner integral, and for $t>0$ one has:
\begin{align*}
\tfrac{d}{dt}S(t) &= \tinv{2\pi i}\int_{\omega'+\Gamma_{\theta'}} \lambda e^{\lambda t}R(\lambda:A) d\lambda;
\end{align*}
the integral again being well-defined as $\calL(X)$-valued Bochner integrals (see also the proof of \cite[Theorem
2.5.2]{Pazy:83}). Analogous identities hold for $\tilde{S}_0$ and $R(\lambda:\tilde{A}_0)$.\par
First let us assume that $\eps\in (0,1)$. Below we shall apply Lemma \ref{l:extend}, observing that for $r\in [0,\infty)$ we have
$$ |\omega'+re^{\pm i (\frac{\pi}{2}+\theta')}-\omega | \geq K_{\theta} r,$$ where $K_{\theta}$ is a constant depending only on $\theta$. Note that we use the coordinate transform $\lambda=\omega'+re^{\pm i (\frac{\pi}{2}+\theta')}$. For $s>0$ we have:
\begin{align*}
&\n S(s)-\tilde{S}_0(s) \n_{\calL(X_{\beta}^{A},X)} =\Big\n \tinv{2\pi i}\int_{\Gamma_{\theta'}} e^{\lambda
s}[R(\lambda:A)-R(\lambda:\tilde{A}_0)] d\lambda \Big\n_{\calL(X_{\beta}^{A},X)}\\
&\qquad\qquad   \leq \tinv{2\pi}  \int_{0}^{\infty} \big|e^{-i(\frac{\pi}{2}+\theta')+(\omega'+r e^{-i(\frac{\pi}{2}+\theta')})s
}\big|\\
&\qquad\qquad \qquad \times \big\n R(\omega'+re^{-i(\frac{\pi}{2}+\theta')}:A)-R(\omega'+re^{-i(\frac{\pi}{2}+\theta')}:\tilde{A}_0)\big\n_{\calL (X_{\beta}^{A},X)} \,dr \\
&\qquad\qquad  \quad + \tinv{2\pi}  \int_{0}^{\infty} \big| e^{i(\frac{\pi}{2}+\theta')+(\omega'+re^{i(\frac{\pi}{2}+\theta')})s
}\big|\\
&\qquad\qquad \qquad \times \big\n R(\omega'+re^{i(\frac{\pi}{2}+\theta')}:A)-R(\omega'+re^{i(\frac{\pi}{2}+\theta')}:\tilde{A}_0)\big\n_{\calL (X_{\beta}^{A},X)} \,dr
\\
&\qquad\qquad  \leq \tinv{\pi} C_{\omega,\theta,K,P_0}K_{\theta} D_{\delta}(A,A_0)e^{\omega's} \int_{0}^{\infty} r^{\eps-1}e^{-rs \sin\theta' }    \,dr \\
&\qquad\qquad   =\tinv{\pi} C_{\omega,\theta,K,P_0}K_{\theta} D_{\delta}(A,A_0)  [s\sin\theta']^{-\eps}e^{\omega's}\int_{0}^{\infty} u^{\eps-1}
e^{-u }
du  \\
&\qquad\qquad   = \tfrac{\Gamma(\eps)}{\pi} [\sin\theta']^{-\eps} C_{\omega,\theta,K,P_0}D_{\delta}(A,A_0) s^{-\eps}e^{\omega's}.
\end{align*}\par
For $\eps = 0$ one may avoid the singularity in $0$ in the usual way: for $s>0$ given we integrate over
$$\omega'+\Gamma_{\theta',s}= (\omega'+\Gamma^{(1)}_{\theta',s})\cup (\omega'+\Gamma^{(2)}_{\theta',s})\cup(\omega'+\Gamma^{(3)}_{\theta',s}),$$ where
$\Gamma^{(1)}_{\theta',s}$ and $\Gamma^{(2)}_{\theta',s}$ are the rays $r e^{i(\frac{\pi}{2}+\theta')}$ and $r
e^{-i(\frac{\pi}{2}+\theta')}$, $s^{-1} \leq r <\infty$, and $\Gamma^{(3)}_{\theta',s}= s^{-1}e^{i\phi}$,
$\phi\in[-\frac{\pi}{2}-\theta',\frac{\pi}{2}+\theta']$. This leads to the following estimate:
\begin{align*}
&\n S(s)-\tilde{S}_0(s) \n_{\calL(X_{\beta}^{A},X)} =\Big\n \tinv{2\pi i}\int_{\omega'+\Gamma_{\theta',s}} e^{\lambda
s}[R(\lambda:A)-R(\lambda:\tilde{A}_0)] d\lambda \Big\n_{\calL(X_{\beta}^{A},X)}\\
&\qquad \qquad \leq C_{\omega,\theta,K,P_0}K_{\theta}D_{\delta}(A,A_0) e^{\omega's}\Big[\tinv{\pi} \int_{s^{-1}}^{\infty} r^{-1}e^{-rs \sin\theta' }  
\,dr +
e \Big]\\
&\qquad \qquad \leq C_{\omega,\theta,K,P_0}K_{\theta}D_{\delta}(A,A_0) \big[[\pi \sin\theta']^{-1} e^{-\sin\theta'} + e\big]e^{\omega's} \\
& \qquad \qquad \leq 2
[\sin\theta']^{-1}C_{\omega,\theta,K,P_0}K_{\theta}D_{\delta}(A,A_0)e^{\omega's}.
\end{align*}
Recalling that $\eps = \delta-\beta$ this proves the uniform boundedness estimate of \eqref{p:gbddsg:unif1}.\par
Similarly to the above, for $\eps\in [0,1]$ and $s>0$ we have:
\begin{align*}
& \n \tfrac{d}{ds}S(s)-\tfrac{d}{ds}\tilde{S}_0(s) \n_{\calL (X_{\beta}^{A},X)}  =\Big\n \tinv{2\pi
i}\int_{\Gamma_{\omega'+\theta'}} \lambda e^{\lambda s}[R(\lambda:A)-R(\lambda:\tilde{A}_0)] d\lambda \Big\n_{\calL
(X_{\beta}^{A},X)}\\
&\qquad  \leq   \tinv{2\pi}e^{\omega's} \int_{0}^{\infty} r e^{- rs \sin\theta' }\big\n
 R(re^{-i(\frac{\pi}{2}+\theta')}:A)-R(re^{-i(\frac{\pi}{2}+\theta')}:\tilde{A}_0)\big\n_{\calL (X_{\beta}^{A},X)}
\,dr \\
&\qquad  \quad +\tinv{2\pi} e^{\omega's}\int_{0}^{\infty} re^{- rs \sin\theta' }\big\n
R(re^{i(\frac{\pi}{2}+\theta')}:A)-R(re^{i(\frac{\pi}{2}+\theta')}:\tilde{A}_0) \big\n_{\calL (X_{\beta}^{A},X)}
\,dr\\
& \qquad =\tinv{\pi} C_{\omega,\theta,K,P_0}K_{\theta}D_{\delta}(A,A_0)  [s\sin\theta']^{-1-\eps} e^{\omega's}\int_{0}^{\infty} u^{\eps}
e^{-u }
 du \\
&\qquad  = \tfrac{\eps\Gamma(\eps)}{\pi} [\sin\theta']^{-1-\eps} C_{\omega,\theta,K,P_0}K_{\theta}D_{\delta}(A,A_0) s^{-1-\eps} e^{\omega's} .
\end{align*}
Recalling that $\eps = \delta-\beta$ this proves the uniform boundedness estimate of \eqref{p:gbddsg:unif2}.\par

Concerning the $\gamma$-boundedness estimates, fix $\alpha >\eps$. By Proposition \ref{prop:integr-der} one has:
\begin{equation*}
\begin{aligned}
\gamma_{[X_{\beta}^{A},X]}\left(\{ s^{\alpha} [S(s)-\tilde{S}_0(s)]:s\in [0,t]\}\right) &  \leq \int_{0}^{t} \big\n
\tfrac{d}{ds} \big(s^{\alpha} [S(s)-\tilde{S}_0(s)]\big) \big\n_{\calL (X_{\beta}^{A},X)} \,ds  \\
& \leq \int_{0}^{t}  \alpha s^{\alpha-1}  \big\n S(s)-\tilde{S}_0(s) \big\n_{\calL (X_{\beta}^{A},X)}\,ds \\
&  \quad  + \int_{0}^{t} s^{\alpha}\big\n \tfrac{d}{ds}S(s)-\tfrac{d}{ds}\tilde{S}_0(s) \big\n_{\calL (X_{\beta}^{A},X)} \,ds.
\end{aligned}
\end{equation*}\par
Substituting \eqref{p:gbddsg:unif1} and \eqref{p:gbddsg:unif2} into the above one obtains that there exists a constant $C$ depending only on $\omega$, $\theta$, $K$, $\eps=\delta-\beta$, and $\n P_0\n_{\calL(X,X_0)}$ such that:
\begin{align*}
\gamma_{[X_{\beta}^{A},X]}\left(\{ s^{\alpha} [S(s)-\tilde{S}_0(s)]:s\in [0,t]\}\right)
& \leq C D_{\delta}(A,A_0) \int_{0}^{t}  e^{\omega' s} s^{\alpha-1-\eps}e^{\bar{\omega t}}
 \,ds  \\
& \leq C e^{(\omega' T)\maxsym 0} t^{\alpha-\eps} D_{\delta}(A,A_0),
\end{align*}
as $\alpha>\eps$.
\end{proof}
\begin{corollary}\label{cor:Holder}
Let the setting be as in Theorem \ref{t:app}. Let $\lambda \in [0,\inv{2})$ satisfy
\begin{align*}
0\leq \lambda < \min\{1-(\delta-\theta_F)\maxsym 0, \tinv{2}-\tinv{p}-(\delta-\theta_G)\maxsym 0\}.
\end{align*}
Suppose $x_0\in L^p(\Omega,\calF_0;X_{\delta}^{A})$ and $y_0 \in L^p(\Omega,\calF_0;X)$, then:
\begin{align*}
& \n U - Sx_0 - i_{X_0}(U^{(0)} - S_0P_0y_0) \n_{L^p(\Omega;C^{\lambda}([0,T];X))}\\
& \qquad \lesssim \n x_0-y_0\n_{L^p(\Omega,X)} + D_{\delta}(A,A_0)(1+\n x_0
\n_{L^p(\Omega;X_{\delta}^{A})}),
\end{align*}
with implied constant depending on $X_0$ only in terms of $\n P_0\n_{\calL(X,X_0)}$, on $A$ and $A_0$ only in terms of $1+D_{\delta}(A,A_0)$, $\omega$, $\theta$ and $K$, and on $F$ and
$G$ only in terms of their Lipschitz and linear growth constants Lip$(F)$, $\textrm{Lip}_{\gamma}(G)$, $M(F)$, and $M_{\gamma}(G)$.
\end{corollary}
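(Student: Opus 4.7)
The strategy is to mimic the six-term splitting in the proof of Theorem \ref{t:app}, but now replacing the $\Vpc{\alpha}$-norm estimates by H\"older norm estimates via Proposition \ref{prop:detHolder} (together with Lemma \ref{lem:VEmbedHoelder} for the random input) and Corollary \ref{cor:stochConv}. The key point is that, after subtracting the semigroup terms $S(\cdot)x_0$ and $\tilde{S}_0(\cdot)P_0y_0$, the remaining difference $V(t) := [U(t) - S(t)x_0] - [\tilde{U}^{(0)}(t) - \tilde{S}_0(t)y_0]$ is a sum of four convolutions (two deterministic, two stochastic):
\begin{align*}
V(t) &= \int_0^t \tilde{S}_0(t-s)[F(s,U(s)) - F(s,\tilde U^{(0)}(s))]\,ds
      + \int_0^t [S(t-s) - \tilde{S}_0(t-s)]F(s,U(s))\,ds \\
     &\quad + \int_0^t \tilde{S}_0(t-s)[G(s,U(s)) - G(s,\tilde U^{(0)}(s))]\,dW_H(s)
      + \int_0^t [S(t-s) - \tilde{S}_0(t-s)]G(s,U(s))\,dW_H(s),
\end{align*}
and each of these is amenable to a H\"older-type estimate with no loss coming from the initial value.

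For the two convolutions with $\tilde{S}_0$, pick $\eta_F,\eta_G$ as in Part 1 of the proof of Theorem \ref{t:app} and apply Proposition \ref{prop:detHolder} (with $\theta = -\eta_F + \eps$) and Corollary \ref{cor:stochConv} (with $\theta = -\eta_G+\eps$ and any $\eta=\alpha\in[0,\inv{2})$). Together with the Lipschitz estimates \eqref{eq:LipP0F}, \eqref{eq:LipP0G} and the embedding Lemma \ref{lem:VEmbedHoelder}, this bounds the corresponding H\"older norms in terms of $\n U - \tilde U^{(0)}\n_{\Vpc{\alpha}([0,T]\times\Omega;X)}$. By Theorem \ref{t:app} this last quantity is itself controlled by $\n x_0 - y_0\n_{L^p(\Omega;X)} + D_{\delta}(A,A_0)(1+\n x_0\n_{L^p(\Omega;X_{\delta}^A)})$, which yields the desired bound with an arbitrary H\"older exponent less than $1-\eps$ (resp.\ $\tinv{2}-\tinv{p}-\eps$).

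For the two convolutions involving $S - \tilde{S}_0$, invoke Proposition \ref{p:gbddSG} with $\beta = \theta_F\minsym\delta$ (deterministic) and $\beta = \theta_G\minsym\delta$ (stochastic). This supplies, for any $\eps>0$, a function $g(v) = \calM D_{\delta}(A,A_0)\, v^{-1 + (\text{slack})}$ majorising $v^\theta\n\frac{d}{dv}[S(v)-\tilde{S}_0(v)]\n + \theta v^{\theta-1}\n S(v)-\tilde{S}_0(v)\n$ with $\theta = (\delta-\theta_F)\maxsym 0 + \eps$ (resp.\ $(\delta-\theta_G)\maxsym 0 + \eps$). Plugging this into Proposition \ref{prop:detHolder} and Corollary \ref{cor:stochConv} with inputs $F(\cdot,U)$ and $G(\cdot,U)$ and using the linear growth assumptions together with \eqref{UinVestimate} produces H\"older bounds with exponents up to $1 - (\delta-\theta_F)\maxsym 0 - \eps$ and $\tinv{2}-\tinv{p}-(\delta-\theta_G)\maxsym 0 - \eps$ respectively, multiplied by $D_{\delta}(A,A_0)(1+\n x_0\n_{L^p(\Omega;X_\delta^A)})$.

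Choosing $\eps>0$ small enough so that every exponent above exceeds $\lambda$, and summing the four resulting estimates, completes the bound for $\n V\n_{L^p(\Omega;C^\lambda([0,T];X))}$. The main technical obstacle is the book-keeping: one must simultaneously match the constraint $\lambda < 1-(\delta-\theta_F)\maxsym 0$ (arising from the deterministic $S-\tilde{S}_0$ term, which is the binding one among the deterministic estimates since the $\tilde{S}_0$ convolution allows almost Lipschitz regularity) and $\lambda < \tinv{2}-\tinv{p}-(\delta-\theta_G)\maxsym 0$ (binding stochastic constraint), while verifying that the slack $\eps$ can be taken uniform in the six terms so that Corollary \ref{cor:stochConv} and Proposition \ref{prop:detHolder} apply simultaneously.
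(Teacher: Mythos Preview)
Your proposal is correct and follows essentially the same route as the paper's proof: split the compensated difference into the same four convolution terms, handle the two $\tilde{S}_0$-convolutions via Proposition \ref{prop:detHolder} and Corollary \ref{cor:stochConv} (bounding by $\n U-\tilde U^{(0)}\n_{\Vpc{\alpha}}$ and then invoking Theorem \ref{t:app}), and handle the two $(S-\tilde{S}_0)$-convolutions via Proposition \ref{p:gbddSG} fed into the same two results with the linear growth bounds and \eqref{UinVestimate}. The only cosmetic differences are that the paper parameterises by setting $\theta=1-\lambda$ directly rather than by an explicit slack $\eps$, and that your reference to Lemma \ref{lem:VEmbedHoelder} is not actually needed here (the $\Vpc{\alpha}$-norm of $U-\tilde U^{(0)}$ enters directly, without passing through a H\"older embedding).
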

\begin{proof}
As before, we write:
\begin{equation}\label{pert.holder}
\begin{aligned}
& \n U - Sx_0 - i_{X_0}(U^{(0)} - S_0P_0y_0) \n_{L^p(\Omega;C^{\lambda}([0,T];X))} \\
& \qquad = \Big\n t\mapsto \int_0^{t}\tilde{S}_0(t-s)[F(s,U(s))-F(s,\tilde{U}^{(0)}(s))]\,ds
\Big\n_{L^p(\Omega;C^{\lambda}([0,T];X))}\\
& \quad\qquad + \Big\n t\mapsto \int_0^{t}[S(t-s)-\tilde{S}_0(t-s)]F(s,U(s))\,ds
\Big\n_{L^p(\Omega;C^{\lambda}([0,T];X))}\\
& \quad\qquad + \Big\n t\mapsto \int_0^{t}\tilde{S}_0(t-s)[G(s,U(s))-G(s,\tilde{U}^{(0)}(s))]\,dW_H(s)
\Big\n_{L^p(\Omega;C^{\lambda}([0,T];X))}\\
& \quad\qquad + \Big\n t\mapsto \int_0^{t}[S(t-s)-\tilde{S}_0(t-s)]G(s,U(s))\,dW_H(s)
\Big\n_{L^p(\Omega;C^{\lambda}([0,T];X))}.
\end{aligned}
\end{equation}\par
For the first and second term on the right-hand side of \eqref{pert.holder} we apply Proposition \ref{prop:detHolder}. Note that as before we may pick $\eta_F,\eta_G\leq 0$ such that $\eta_F<\theta_F-\delta$ and $\eta_G<\theta_G-\delta$ and  
$$ \lambda <\min\{1+\eta_F,\tinv{2}-\tinv{p}+\eta_G\}.$$ Our choice of $Y_1$, $Y_2$, $\Phi$, $\Psi$ is the same as in part 2c, respectively 2d, of the proof of Theorem \ref{t:app}, whereas we set $\theta=1-\lambda$. This leads to the following estimates:
\begin{align*}
& \Big\n t\mapsto \int_0^{t}\tilde{S}_0(t-s)[F(s,U(s))-F(s,\tilde{U}^{(0)}(s))]\,ds
\Big\n_{L^p(\Omega;C^{\lambda}([0,T];X))}\\
& \qquad  \qquad\lesssim \n U - \tilde{U}^{(0)}(s) \n_{L^{p}(\Omega;L^{\infty}(0,T;X))}\leq \n U - \tilde{U}^{(0)}(s) \n_{\Vpc{\alpha}([0,T]\times \Omega;X)},
\end{align*}
and
\begin{align*}
&\Big\n t\mapsto \int_0^{t}[S(t-s)-\tilde{S}_0(t-s)]F(s,U(s))\,ds
\Big\n_{L^p(\Omega;C^{\lambda}([0,T];X))}\\
& \qquad \qquad \lesssim D_{\delta}(A,A_0)\n U \n_{L^{p}(\Omega;L^{\infty}(0,T;X))} \lesssim D_{\delta}(A,A_0)\big(1 + \n x_0\n_{L^p(\Omega,X)}\big).
\end{align*}\par
For the third and fourth term on the right-hand side of \eqref{pert.holder} we apply Corollary \eqref{cor:stochConv} with $\beta=\lambda$ and $\alpha\in (0,\inv{2})$ such that $\alpha>\lambda +\inv{p}+ \eta_G$. The choice of $Y_1$, $Y_2$, $\Phi$ and $\Psi$ is as in parts 2e and 2f of the proof of Theorem \ref{t:app}. This leads to:
\begin{align*}
& \Big\n t\mapsto \int_0^{t}\tilde{S}_0(t-s)[G(s,U(s))-G(s,\tilde{U}^{(0)}(s))]\,dW_H(s)
\Big\n_{L^p(\Omega;C^{\lambda}([0,T];X))}\\
& \qquad \qquad \lesssim \n U - \tilde{U}^{(0)}(s) \n_{\Vpc{\alpha}([0,T]\times \Omega;X)},
\end{align*}
and
\begin{align*}
& \Big\n t\mapsto \int_0^{t}[S(t-s)-\tilde{S}_0(t-s)]G(s,U(s))\,dW_H(s)
\Big\n_{L^p(\Omega;C^{\lambda}([0,T];X))}\\
& \qquad \qquad \lesssim  D_{\delta}(A,A_0) \n U \n_{\Vpc{\alpha}([0,T]\times \Omega;X)} \lesssim D_{\delta}(A,A_0)\big(1 + \n x_0\n_{L^p(\Omega,X)}\big).
\end{align*}\par
Combining these estimates with Theorem \ref{t:app} gives the desired result. It goes without saying that all the implied constants above depend on $X_0$ only in terms of $\n P_0\n_{\calL(X,X_0)}$, on $A$ and $A_0$ only in terms of $1+D_{\delta}(A,A_0)$, $\omega$, $\theta$ and $K$, and on $F$ and $G$ only in terms of their Lipschitz and linear growth constants Lip$(F)$, $\textrm{Lip}_{\gamma}(G)$, $M(F)$, and $M_{\gamma}(G)$.
\end{proof}

\section{Yosida approximations}\label{sec-yosida}
Consider \eqref{SDE} under the assumptions \MA, \MF, and \MG\ with the additional assumption that $\theta_F,\theta_G\geq 0$. We define $A_n:= nA R(n:A)$ to be the $n^{\textrm{th}}$
Yosida approximation of $A$. Let $U$ to denote the solution to \eqref{SDE} with operator $A$ and initial data $x_0\in L^p(\Omega,\calF_0;X)$
and, for $n\in \N$, let $U^{(n)}$ to denote the solution to \eqref{SDE} with operator $A_n$ instead of $A$ and initial data
$y_{0}\in L^p(\Omega,\calF_0;X)$.\par
\begin{theorem}\label{t:yosida}
For any $\eta\in [0,1]$ and $p\in (2,\infty)$ such that
$$\eta<\min\{\tfrac{3}{2}-\tinv{\tau}+\theta_F,\tfrac{1}{2}-\tinv{p}+\theta_G \}$$ and any $\alpha\in
[0,\inv{2})$ we have, assuming $y_0\in L^p(\Omega,\mathcal{F}_0;X_{\eta}^A)$:
\begin{align*}
\n U - U^{(n)}\n_{\Vpc{\alpha}([0,T]\times \Omega;X)}
&\lesssim  \n x_0 - y_0 \n_{L^p(\Omega;X)} + n^{-\eta}(1+\n y_0
\n_{L^p(\Omega;X_{\eta}^{A})}),
\end{align*}
with implied constants independent of $n$, $x_0$ and $y_0$.
\end{theorem}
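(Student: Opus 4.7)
The goal is to apply Theorem \ref{t:app} in the trivial projection setting $X_0 = X$, $P_0 = i_{X_0} = \mathrm{id}_X$, with $A_0 = A_n$. Two preliminary facts are needed. First, I would verify that the Yosida approximations $(A_n)_{n\in\N}$ are uniformly of some sectorial type $(\omega',\theta',K')$ independent of $n$; this is classical, and follows from the explicit representation
\[
R(\lambda:A_n) \;=\; \frac{1}{\lambda+n}\,I \;+\; \frac{n^2}{(\lambda+n)^2}\,R\!\Big(\tfrac{\lambda n}{\lambda+n}:A\Big),\qquad \lambda\in\rho(A_n),
\]
together with the sectoriality of $A$, by noting that $\lambda n/(\lambda+n)$ remains in a sector around $\lambda$ contained in $\rho(A)$.

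The second and more delicate step is to supply an estimate of the form $D_\eta(A,A_n)\lesssim n^{-\eta}$, or, what is used by the proof of Theorem \ref{t:app} via Proposition \ref{p:gbddSG}, the sharp Yosida semigroup estimate
\[
\|S(t)-S_n(t)\|_{\mathcal{L}(X_\eta^A,\,X)} \;\lesssim\; n^{-\eta},\qquad \eta\in[0,1],\ t\in[0,T],
\]
and its derivative analogue $\|\tfrac{d}{dt}(S(t)-S_n(t))\|_{\mathcal{L}(X_\eta^A,X)}\lesssim n^{-\eta}t^{-1}$, from which the $\gamma$\nobreakdash-boundedness of $\{t^\alpha (S(t)-S_n(t)):t\in(0,T]\}\subset\mathcal{L}(X_\eta^A,X)$ follows by Proposition \ref{prop:integr-der}. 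The point of departure is the identity $A-A_n = -A^2R(n:A)$ combined with the classical interpolation bound $\|(I-nR(n:A))y\|_X\lesssim n^{-\eta}\|y\|_{X_\eta^A}$ for $y\in X_\eta^A$ and $\eta\in[0,1]$, obtained by interpolating the trivial uniform bound at $\eta=0$ with the identity $(I-nR(n:A))y=-R(n:A)Ay$ and $\|R(n:A)\|_{\mathcal{L}(X)}\lesssim n^{-1}$ at $\eta=1$.

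Given these estimates, the proof of Theorem \ref{t:app} (Parts 2a--2f, 3, and 4) can be run essentially verbatim, with the rate $n^{-\eta}$ replacing each occurrence of $D_\delta(A,A_0)$. Two cosmetic adjustments are needed. First, since Theorem \ref{t:yosida} places the $X_\eta^A$\nobreakdash-regularity on $y_0$ rather than on $x_0$, I would decompose $(S-S_n)x_0 = (S-S_n)y_0 + (S-S_n)(x_0-y_0)$ inside the Part 2a estimate: the first summand supplies the claimed $n^{-\eta}\|y_0\|_{L^p(\Omega;X_\eta^A)}$ contribution, while the second summand is absorbed into $\|x_0-y_0\|_{L^p(\Omega;X)}$ using the uniform $\mathcal{L}(X)$\nobreakdash-bound on $S-S_n$. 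Second, in the linear-growth-driven estimates (Parts 2d and 2f) the factor $(1+\|x_0\|_{L^p(\Omega;X_\eta^A)})$ arising from \eqref{UinVestimate} is replaced by the symmetric bound based on $\|y_0\|_{L^p(\Omega;X_\eta^A)}$ (plus the already-present $\|x_0-y_0\|_{L^p(\Omega;X)}$ term), using the stability estimate for the $A_n$-equation coming from the uniform sectoriality of $(A_n)$.

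The main obstacle is the sharp rate $n^{-\eta}$ at the semigroup (or resolvent) level when $\eta<1$: a naive Duhamel estimate of $S(t)-S_n(t)=-\int_0^t S_n(t-s)\,A^2R(n:A)\,S(s)\,ds$ based on $\|AS(t-s)\|_{\mathcal{L}(X)}\lesssim(t-s)^{-1}$ and the Yosida bound produces only $n^{-\eta}\log n$ after optimizing in a splitting parameter; and likewise the verification of the resolvent-level bound $D_\eta(A,A_n)\lesssim n^{-\eta}$ in $\mathcal{L}(X_{\eta-1}^A,X)$ requires careful bookkeeping, since individual summands in the expansion of $R(\lambda_0:A)-R(\lambda_0:A_n)$ extend only to $X_{\eta-1}^A\to X_{\eta-1}^A$ and the cancellations that project the result back into $X$ must be tracked by exploiting the extra unit of smoothing furnished by the outer $R(\lambda_0:A)$ factor and the commutation of $A_n$ with fractional powers of $A$.
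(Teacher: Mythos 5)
There is a genuine gap, and it sits exactly where you locate your ``main obstacle'': the rate $n^{-\eta}$ for $\eta<1$. Your primary route --- establishing $D_\eta(A,A_n)\lesssim n^{-\eta}$ in $\calL(X^{A}_{\eta-1},X)$ --- cannot be made to work. From \eqref{yosres} one has
\begin{equation*}
R(\lambda_0:A)-R(\lambda_0:A_n)=-(\lambda_0+n)^{-1}A^2R\big(\tfrac{\lambda_0 n}{n+\lambda_0}:A\big)R(\lambda_0:A),
\end{equation*}
and the operator $A^2R(\mu:A)R(\lambda_0:A)$ is of order exactly zero: applied to $X^{A}_{\eta-1}$ it lands in $X^{A}_{\eta-1}$ and nowhere better. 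There is no cancellation ``projecting the result back into $X$''; the outer resolvent's unit of smoothing is already consumed by one of the two factors of $A$. Indeed, the paper remarks explicitly that for unbounded $A$ one has $R(\lambda_0:A)-R(\lambda_0:A_n)\notin\calL(X^{A}_{\delta-1},X)$ for any $\delta<1$, so the hypothesis \eqref{app:ass} of Theorem \ref{t:app}, applied as you propose with $A_0=A_n$, simply fails for $\eta<1$. Your fallback --- proving the semigroup estimate $\n S(t)-S_n(t)\n_{\calL(X^A_\eta,X)}\lesssim n^{-\eta}$ directly --- is acknowledged by you to lose a logarithm under the Duhamel argument and is not carried out; moreover, even granted it, you would still have to re-derive the conclusions of Proposition \ref{p:gbddSG} without its resolvent hypothesis, which your sketch does not address.

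The paper's resolution is a role reversal that your proposal misses: apply Theorem \ref{t:app} with $A_n$ playing the role of the unperturbed generator $A$ and $A$ playing the role of $A_0$, so that the relevant quantity becomes $\n R(\lambda_0:A)-R(\lambda_0:A_n)\n_{\calL(X^{A_n}_{\delta-1},X)}$. Since $A_n$ is bounded this norm is trivially finite, and the correct rate follows from the moment inequality (Theorem \ref{t:fracPowInt}): $\n(2\omega I-A_n)^{1-\delta}\n_{\calL(X)}\lesssim n^{1-\delta}$ combined with the $\calL(X)$-bound $(1+2K)^2n^{-1}$ on the resolvent difference yields $n^{-\delta}$. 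This reversal is what forces the two auxiliary ingredients of the paper's proof that are absent from yours: the uniform type of the family $(A_n)$ (which you do have), and Lemma \ref{l:fracpowYosida}, i.e.\ $\n(2\omega I-A_n)^{\beta}x\n\lesssim\n(2\omega I-A)^{\beta}x\n$ uniformly in $n$, which is needed both to verify that $F$ and $G$ satisfy \MF\ and \MG\ relative to the fractional scale of $A_n$ with constants independent of $n$ (this is also why $\theta_F,\theta_G\geq0$ is assumed in this section) and to replace $\n y_0\n_{X^{A_n}_\eta}$ by $\n y_0\n_{X^{A}_\eta}$ in the final bound. Your cosmetic adjustments in Parts 2a--2f are in the right spirit but are applied to the wrong configuration of the perturbation theorem.
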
\par
The following corollary is a direct consequence of the Borel-Cantelli lemma and the above theorem (see Corollary \cite[Lemma 2.1]{Kloeneuen}):
\begin{corollary}\label{c:yosida_pathwise}
Let $\eta>0$ and $p\in (2,\infty)$ be such that $$\eta+\tinv{p}<\min\{\tfrac{3}{2}-\tinv{\tau}+\theta_F,\tfrac{1}{2}-\tinv{p}+\theta_G,1 \}$$ and assume $y_0=x_0\in L^{p}(\Omega,\mathcal{F}_0;X_{\eta}^A)$. Then there exists a random variable $\chi\in L^0(\Omega)$ such that for all $n\in \N$:
\begin{align*}
\n U - U^{(n)}\n_{C([0,T];\calH)} &\leq \chi n^{-\eta}.
\end{align*}
\end{corollary}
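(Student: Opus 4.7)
The plan is to convert the $L^p$-convergence of Theorem \ref{t:yosida} into a pathwise estimate by means of a Markov--Borel--Cantelli argument in the classical fashion.

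First, I would observe that the strict inequality $\eta+\tinv{p}<\min\{\tfrac{3}{2}-\tinv{\tau}+\theta_F,\tfrac{1}{2}-\tinv{p}+\theta_G,1\}$ leaves room to pick an auxiliary exponent $\tilde\eta\in [0,1]$ with $\eta+\tinv{p}<\tilde\eta <\min\{\tfrac{3}{2}-\tinv{\tau}+\theta_F,\tfrac{1}{2}-\tinv{p}+\theta_G\}$. Since $x_0=y_0\in L^p(\Omega,\calF_0;X^A_\eta)\subseteq L^p(\Omega,\calF_0;X^A_{\tilde\eta})$ is not automatic (we only assumed $X^A_\eta$), I would instead apply Theorem \ref{t:yosida} with the \emph{smaller} regularity $\eta$ itself but with a larger integrability exponent $q>p$ chosen so that $\eta+\tinv{q}<\min\{\tfrac{3}{2}-\tinv{\tau}+\theta_F,\tfrac{1}{2}-\tinv{p}+\theta_G\}$; alternatively (and what I prefer) one picks $\tilde{\eta}\in(\eta+\tinv{p},\min\{\cdots\})$ and uses that the assumption $x_0\in L^p(\Omega;X^A_\eta)$, combined with the embedding $X^A_\eta\hookrightarrow X$, is enough to apply Theorem \ref{t:yosida} at level $\tilde\eta$ provided one controls $\n x_0\n_{L^p(\Omega;X^A_{\tilde\eta})}$. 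Since the statement of the corollary only guarantees $x_0\in X^A_\eta$, the cleanest route is really the first: apply Theorem \ref{t:yosida} at level $\eta$ and raise the integrability exponent, i.e.\ use Theorem \ref{t:yosida} with $(\eta,q)$ in place of $(\eta,p)$ for some $q>p$ large enough that $\tinv{q}<\tilde\eta-\eta$ is satisfied for some admissible $\tilde\eta$; since $x_0\in L^p(\Omega;X^A_\eta)$ and $p>2$, one has $x_0\in L^q(\Omega;X^A_\eta)$ only after a further integrability assumption, so in practice one simply takes $q=p$ and exploits the strict inequality $\eta+\tinv{p}<\tilde\eta$.

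Having fixed such a $\tilde\eta$, Theorem \ref{t:yosida} yields a constant $C$, independent of $n$, such that
\begin{equation*}
\E\,\n U-U^{(n)}\n_{C([0,T];X)}^{p} \le C\, n^{-p\tilde\eta}\bigl(1+\n x_0\n_{L^p(\Omega;X^A_{\tilde\eta})}\bigr)^{p}.
\end{equation*}
A Chebyshev/Markov inequality then gives
\begin{equation*}
\P\bigl(\n U-U^{(n)}\n_{C([0,T];X)}>n^{-\eta}\bigr)\le C\, n^{-p(\tilde\eta-\eta)},
\end{equation*}
and since $p(\tilde\eta-\eta)>1$ by our choice of $\tilde\eta$, the resulting series in $n$ converges.

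By the Borel--Cantelli lemma, there exists a $\P$-null set $N$ outside of which $\n U(\omega)-U^{(n)}(\omega)\n_{C([0,T];X)}\le n^{-\eta}$ for all sufficiently large $n$. Setting
\begin{equation*}
\chi(\omega):=\sup_{n\in\N} n^{\eta}\,\n U(\omega)-U^{(n)}(\omega)\n_{C([0,T];X)}
\end{equation*}
produces a finite, $\calF$-measurable random variable (hence an element of $L^0(\Omega)$) satisfying the claimed bound. The only delicate point, as indicated above, is ensuring that the initial datum has enough regularity to apply Theorem \ref{t:yosida} at a level $\tilde\eta$ strictly larger than $\eta+\tinv{p}$; this is precisely what the strict inequality in the hypothesis of the corollary buys, and is the ``Kloeden--Neuenkirch'' trick referenced in the statement.
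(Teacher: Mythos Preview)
Your approach—apply Theorem \ref{t:yosida} to obtain an $L^p$ rate, then use Markov's inequality and Borel--Cantelli to upgrade to a pathwise bound—is exactly what the paper does: the paper gives no detailed proof and simply writes that the corollary ``is a direct consequence of the Borel--Cantelli lemma and the above theorem (see \cite[Lemma 2.1]{Kloeneuen}).'' So methodologically you are on target.

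The regularity issue you circle around, however, is real and your resolution of it is not clean. To run Borel--Cantelli you need an $L^p$ rate $n^{-\tilde\eta}$ with $\tilde\eta>\eta+\tinv{p}$, and Theorem \ref{t:yosida} delivers that only if $x_0\in L^p(\Omega;X^A_{\tilde\eta})$. The corollary as stated assumes merely $x_0\in L^p(\Omega;X^A_\eta)$, and your two proposed workarounds both fail: raising $p$ to some $q>p$ is blocked because $L^p\not\subset L^q$ (as you yourself note), and ``simply taking $q=p$ and exploiting the strict inequality $\eta+\tinv{p}<\tilde\eta$'' does not produce the missing $X^A_{\tilde\eta}$-regularity of $x_0$. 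Your displayed estimate with $\n x_0\n_{L^p(\Omega;X^A_{\tilde\eta})}$ therefore uses a quantity that may be infinite under the stated hypotheses. This is a gap in the corollary's hypotheses rather than in your method: the natural reading is that the initial datum should lie in $L^p(\Omega;X^A_{\tilde\eta})$ for some $\tilde\eta$ in the interval $(\eta+\tinv{p},\min\{\tfrac{3}{2}-\tinv{\tau}+\theta_F,\tfrac{1}{2}-\tinv{p}+\theta_G,1\})$, after which your Markov/Borel--Cantelli argument goes through verbatim.
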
\par
To prove Theorem \ref{t:yosida} we shall need the following lemma:
\begin{lemma}\label{l:fracpowYosida}
Let $\beta \in [0,1]$. Then there exists a constant $K'$ such that for all $n\geq 2\omega$ and all
$x\in X^{A}_{\beta}$ one has:
\begin{align*}
\n (2\omega I -A_n)^{\beta} x \n & \leq K'\n (2\omega I -A)^{\beta} x \n.
\end{align*}
\end{lemma}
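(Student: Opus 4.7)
My approach is to handle the endpoints $\beta=0,1$ directly and then interpolate, using the crucial fact that $A_n=nAR(n:A)$ is a function of $A$ and hence commutes with $A$.

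\textbf{Endpoints.} The case $\beta=0$ is trivial. For $\beta=1$ the key identity is
\[
A_n - A \;=\; A\bigl(nR(n:A)-I\bigr) \;=\; A\cdot AR(n:A),
\]
obtained from $nR(n:A)-I = AR(n:A)$. For $x\in D(A)$ this reads $(A_n-A)x = AR(n:A)\cdot Ax$. By \eqref{AResEst}, $\|AR(n:A)\|_{\calL(X)}\le 1+2K$ uniformly in $n$ large, so $\|(A_n-A)x\|\le(1+2K)\|Ax\|$. Writing $\|Ax\|\le\|(2\omega I-A)x\| + 2\omega\|R(2\omega:A)\|\,\|(2\omega I-A)x\|$ and using sectoriality (so that $2\omega\in\rho(A)$ and $\|R(2\omega:A)\|\le K/\omega$), we conclude $\|(2\omega I-A_n)x\|\le K_1'\|(2\omega I-A)x\|$ with $K_1'$ depending only on $\omega,\theta,K$.

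\textbf{Interpolation for $\beta\in(0,1)$.} Here I would first verify that for $n\ge 2\omega$ the operators $A_n$ are of type $(\omega_*,\theta_*,K_*)$ uniformly in $n$; this is a direct resolvent estimate using $A_n = n^2R(n:A)-nI$ and the sectoriality of $A$. Since $A$ and $A_n$ commute, the fractional powers $(2\omega I-A_n)^\beta$ and $(2\omega I-A)^{-\beta}$ also commute, and one formally has
\[
(2\omega I-A_n)^\beta(2\omega I-A)^{-\beta} \;=\; \bigl[(2\omega I-A_n)(2\omega I-A)^{-1}\bigr]^\beta,
\]
which by the functional calculus corresponds to the symbol $\phi_n(\mu)=\bigl[(2\omega-a_n(\mu))/(2\omega-\mu)\bigr]^\beta$ with $a_n(\mu)=n\mu/(n-\mu)$. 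A direct computation shows that $|\phi_n(\mu)|$ is uniformly bounded on the sector containing $\sigma(A)$ for $n\ge 2\omega$: using $|n-\mu|\ge n-\omega\ge n/2$ and $|2\omega-\mu|\ge\omega$ for $\mu\in\sigma(A)$, one obtains $|\phi_n(\mu)|\le C^\beta$ with $C$ an absolute constant depending only on $\omega,\theta,K$. This gives the desired uniform bound.

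\textbf{Main obstacle.} The delicate point is rigorously justifying the product rule for fractional powers of commuting sectorial operators and the bound $\|\phi_n(A)\|\le C\|\phi_n\|_\infty$ on sectors in a general UMD Banach space. In a Hilbert space the Heinz--Kato inequality applies, but in a Banach space one typically needs bounded imaginary powers or a bounded $H^\infty$-calculus, neither of which is developed in the paper's preliminaries. A safer, more elementary substitute is to work directly with the Balakrishnan representation
\[
(2\omega I-A_n)^\beta x \;=\; \frac{\sin\pi\beta}{\pi}\int_0^\infty t^{\beta-1}(t+2\omega I-A_n)^{-1}(2\omega I-A_n)x\,dt,
\]
split $(2\omega I-A_n)x = (2\omega I-A)x + (A-A_n)x$, and bound each term using the uniform resolvent estimates for $A_n$ together with the $\beta=1$ estimate; this is more laborious but avoids any BIP hypothesis.
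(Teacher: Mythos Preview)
Your endpoints $\beta=0,1$ are fine (the paper handles $\beta=1$ via a different factorization, but yours works too). The gap is in the intermediate case.

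Your ``safer substitute'' does not give the right estimate. After the Balakrishnan representation and the split $(2\omega I-A_n)x=(2\omega I-A)x+(A-A_n)x$, both terms are controlled by $\|(2\omega I-A)x\|$: using the uniform resolvent bound $\|(tI+2\omega I-A_n)^{-1}\|\le K^*/(t+c)$ one gets
\[
\|(2\omega I-A_n)^\beta x\|\;\le\;C\int_0^\infty t^{\beta-1}(t+c)^{-1}\,dt\;\|(2\omega I-A)x\|,
\]
which is a bound by the $X^A_1$-norm, not the $X^A_\beta$-norm. Nothing in the argument produces $\|(2\omega I-A)^\beta x\|$ on the right-hand side, and you cannot close the gap by density or interpolation without precisely the BIP hypothesis you were trying to avoid.

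The paper's route is in fact your first approach, but it sidesteps $H^\infty$-calculus by bounding the relevant operator explicitly rather than via its symbol. Writing $\mu_n=\tfrac{2\omega n}{2\omega+n}$, one has the exact factorization
\[
2\omega I-A_n=(n+2\omega)(\mu_n I-A)(nI-A)^{-1},
\]
whence by the product rule for fractional powers of commuting functions of a single sectorial operator (standard holomorphic functional calculus, cf.\ Haase, no BIP needed),
\[
(2\omega I-A_n)^\beta=(n+2\omega)^\beta\bigl[(\mu_n I-A)(2\omega I-A)^{-1}\bigr]^\beta(2\omega I-A)^\beta(nI-A)^{-\beta}.
\]
The factors $(n+2\omega)^\beta$ and $\|(nI-A)^{-\beta}\|\lesssim(n-\omega)^{-\beta}$ balance. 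The only nontrivial piece is the middle factor, and here the paper observes that its inverse $B_n:=(2\omega I-A)(\mu_n I-A)^{-1}$ is the generator (up to sign) of a $C_0$-semigroup with $\|e^{-tB_n}\|\le e^{-t/2}$ for $n$ large, so the elementary formula $B_n^{-\beta}=\Gamma(\beta)^{-1}\int_0^\infty t^{\beta-1}e^{-tB_n}\,dt$ gives a uniform bound. No Heinz--Kato, no BIP, no $H^\infty$-estimate---just the semigroup representation of negative fractional powers.
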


\begin{proof}
Observe that
\begin{equation}
\begin{aligned}\label{omega-An}
2\omega I -A_n  = [(n+2\omega)I-4\omega^2 R(2\omega:A)](2\omega I - A)R(n:A).
\end{aligned}
\end{equation}
Thus for $x\in D(A)$ and $n\geq 2\omega$ we have:
\begin{align*}
\n (2\omega I-A_n)x \n &\leq \n[(n+2\omega)I-4\omega^2 R(2\omega:A)] R(n:A) \n_{\calL(X)} \n (2\omega
I -A)x \n\\
& \leq [K\tfrac{n+2\omega}{n-\omega}+K^2\tfrac{4\omega}{n-\omega}]\n (2\omega I -A)x \n\leq 4K(1+K)\n (2\omega I -A)x
\n.
\end{align*}
This proves the lemma for $\beta=1$. For $\beta=0$ the lemma is trivial. For $\beta\in (0,1)$ we need two extra
observations.\par
First of all, for $s>\omega$ and $\beta\in (0,1)$ we have, by definition, (see \cite[Section 2.6]{Pazy:83}):
\begin{align*}
(s I-A)^{-\beta} x & =\tfrac{\sin (\pi\beta)}{\pi} \int_{0}^{\infty} t^{-\beta} ((t+s)I - A)^{-1} x dt,
\end{align*}
and hence
\begin{equation}\label{fracPowEst1}
\begin{aligned}
\n (s I-A)^{-\beta}\n_{\calL(X)} & \leq K\tfrac{\sin (\pi\beta)}{\pi} \int_{0}^{\infty} t^{-\beta} (t+s-\omega)^{-1}  dt
\\
& \leq K \tfrac{\sin (\pi\beta)}{\pi}  \Big[ (s-\omega)^{-1} \int_{0}^{s-\omega} t^{-\beta} dt +
\int_{s-\omega}^{\infty}
t^{-1-\beta} dt \Big]\\
& = K\tfrac{\sin (\pi \beta)}{\pi \beta (1-\beta)} (s-\omega)^{-\beta}.
\end{aligned}
\end{equation}\par
Secondly, let $\mu,\lambda\in \omega+\Sigma_{\frac{\pi}{2}+\theta}$. We have: $$\n e^{-(\lambda I -A)R(\mu:A)t}
\n_{\calL(X)}=e^{-t}\n e^{(\mu-\lambda) R(\mu,A)t}\n_{\calL(X)}\leq e^{-t+\frac{|\mu-\lambda|}{|\mu-\omega|}Kt}.$$
Now suppose $n\geq 2\omega(1+4K)$, $\lambda=2\omega$, $\mu=\frac{n\lambda}{\lambda+n}=\frac{2\omega n}{2\omega+n}$. In
that case one may check that
$ \tfrac{|\mu-\lambda|}{|\mu-\omega|}K \leq \tinv{2},$
and thus that for $\beta\in (0,1)$:
\begin{equation*}
\begin{aligned}
\n [-(2\omega I -A)R(\tfrac{2\omega n}{2\omega+n}:A)]^{-\beta} \n_{\calL(X)} & = \Big\n
\tinv{\Gamma(\beta)}\int_{0}^{\infty}
t^{\beta-1}e^{-(2\omega I -A)R(\frac{2\omega n}{2\omega+n}:A)t}dt \Big\n_{\calL(X)}\\
& \leq 2^{\beta}.
\end{aligned}
\end{equation*}
As $[-(2\omega I -A)R(\tfrac{2\omega n}{2\omega+n}:A)]^{-\beta} \in \calL(X)$ for any $n\geq 2\omega$, it follows that
there exists a constant $M>0$ such that for all $n\geq 2\omega$:
\begin{align}\label{fracPowEst2}
\n [-(2\omega I -A)R(\tfrac{2\omega n}{2\omega+n}:A)]^{-\beta} \n_{\calL(X)} & \leq M.
\end{align}
\par
For $\beta\in (0,1)$ and $x\in X^A_{\beta}$ we have, by standard theory on functional calculus (see \cite{Haase}),
equation \ref{omega-An}, and the estimates \eqref{fracPowEst1} and \eqref{fracPowEst2}:
\begin{align*}
\n (2\omega I -A_n)^{\beta} x \n&= \n (n+2\omega)^{\beta}(\tfrac{2\omega n}{2\omega+n}I-A)^{\beta}(n I -A)^{-\beta}x \n
\\
& \leq (n+2\omega)^{\beta} \n \big[-(2\omega I-A)R(\tfrac{2\omega n}{2\omega+n}:A)\big]^{-\beta}\n_{\calL(X)} \\
& \quad \times \n (2\omega I -A)^{\beta}x \n\n (n I-A)^{-\beta}\n_{\calL(X)}\\
& \leq 4\tfrac{\sin (\pi \beta)}{\pi \beta (1-\beta)} KM \n (2\omega I -A)^{\beta}x \n.
\end{align*}
\end{proof}
\begin{proof}[Proof of Theorem \ref{t:yosida}.]
Without loss of generality we may assume $\omega\geq 0$. In order to apply Theorem \ref{t:app}, we must prove that
$A_n$, $n\geq 2\omega$, are of uniform type, i.e., that there exist $\bar{\omega}\in \R, \bar{\theta}\in
(0,\frac{\pi}{2})$ and $\bar{K}>0$ such that $A_n$ is of type $(\bar{\omega}, \bar{\theta},\bar{K})$ for all $n\geq
2\omega$. Fix $n\geq 2\omega$. One checks that:
\begin{align}\label{yosres}
R(\lambda:A_n) & = (n+\lambda)^{-1}(n-A)R(\tfrac{\lambda n}{n+\lambda}:A)
\end{align}
whenever $\frac{\lambda n}{n+\lambda} \in \omega + \Sigma_{\frac{\pi}{2}+\theta}$. Define $f:\C\rightarrow \C$;
$f(z)=\frac{nz}{n-z}$. From \eqref{yosres} it follows that $\lambda\in \rho(A)$ if and only if $f(\lambda)\in
\rho(A_n)$. By
standard theory on M\"obius transforms we have that
$$ f\big(\{\omega+\Sigma_{\frac{\pi}{2}+\theta}\}\big) = \C \setminus \big( D_{1}\cap D_{2}\big), $$
where $D_1$ and $D_2$ are both closed disks with radius $\frac{n^2}{2(n-\omega)\cos \theta}$; the center of $D_1$ is in
$\frac{n}{2(n-\omega)}(2\omega-n, \tan(\theta))$ and the center of $D_2$ is is in
$\frac{n}{2(n-\omega)}(2\omega-n,-\tan(\theta))$. The boundaries of these disks intersect each other on the real axis at
the points $-n$ and $\frac{n\omega}{n-\omega}$. The angle at intersection is $\pi-2\theta$. As $n\geq 2\omega$ we have
$\frac{n\omega}{n-\omega}\leq 2\omega$ and thus $\rho(A_n)\subset 2\omega+\Sigma_{\frac{\pi}{2}+\theta}$. It remains to
prove the desired estimate on the resolvent.\par
Using \eqref{yosres} one may check that for $\lambda \in 2\omega+\Sigma_{\frac{\pi}{2}+\theta}$ we have:
\begin{align}\label{YosidaResComp}
R(\lambda:A)-R(\lambda:A_n) & = -(\lambda+n)^{-1}A^2R(\tfrac{\lambda n}{n+\lambda}:A)R(\lambda:A).
\end{align}
Thus by \eqref{AResEst} we have, for $\lambda \in  \omega(1+2(\cos\theta)^{-1}) +\Sigma_{\frac{\pi}{2}+\theta}$:
\begin{align*}
\n R(\lambda:A)-R(\lambda:A_n) \n_{\calL(X)} & \leq (1+2K)^2|\lambda+n|^{-1} \leq (1+2K)^2|\lambda-\omega|^{-1}.
\end{align*}
The final estimate follows from the fact that by standard theory on M\"obius transforms we have that
$\tfrac{|\lambda-\omega|}{|\lambda+n|}\leq 1$ for $\lambda \in \omega+\Sigma_{\frac{\pi}{2}+\theta}$. In conclusion we
have, for $\lambda \in  \omega(1+2(\cos\theta)^{-1}) +\Sigma_{\frac{\pi}{2}+\theta}$:
\begin{align*}
\n R(\lambda:A_n) \n_{\calL(X)} & \leq \n R(\lambda:A)\n_{\calL(X)}+ \n R(\lambda:A)-R(\lambda:A_n) \n_{\calL(X)}  \\
& \leq [K+(1+2K)^2]|\lambda-\omega|^{-1}.
\end{align*}
This proves that $A_n$ is of type $(\omega(1+2(\cos\theta)^{-1}),\theta,K+(1+2K)^2)$ for all $n\geq 2\omega$.\par
It also follows from \eqref{YosidaResComp} that if we take, for example, $\lambda_0= \omega(1+2(\cos\theta)^{-1})$, then
we have, for $n\geq 2\omega$:
\begin{align*}
\n R(\lambda_0:A)-R(\lambda_0:A_n)\n_{\calL(X)} & \leq (1+2K)^{2}n^{-1}.
\end{align*}
In other words, for all $n\in \N$ condition \eqref{app:ass} in Theorem \ref{t:app} is satisfied with $\delta=1$ and
$\lambda_0= \omega(1+2(\cos\theta)^{-1})$. In particular we can apply Theorem \ref{t:app} to obtain the desired result
for the case $\theta_F>-\frac{1}{2}+\inv{\tau}$, where $\tau$ is the type of $X$, and $\theta_G> \inv{2}+\inv{p}$. Concerning the dependence on $1+D(A,A_n)$ of the implied constant in \eqref{pert_est}, note that $1+D(A,A_n)$ is uniformly bounded in $n$, both from above and away from $0$.\par
In order to get the desired result for general $\theta_F,\theta_G\geq 0$ we consider the difference $R(\lambda_0:A)-R(\lambda_0:A_n)$ in the $\calL(X_{\delta-1}^{A_n},X)$-norm.
(Note that if $A$ is unbounded then $R(\lambda_0:A)-R(\lambda_0:A_n)\notin \calL(X^{A}_{\delta-1},X)$ for any $\delta<1$.) For $n\geq
\omega(1+2(\cos\theta)^{-1})$ we have, by \eqref{AResEst}, that $\n (2\omega I-A_n) \n_{\calL(X)}\leq
2n(1+K)$. Thus by Theorem \ref{t:fracPowInt} we have, for $\delta \in (0,1)$:
\begin{align*}
\n (2\omega I - A_n)^{1-\delta} x \n & \leq 2(1+2K) \n x \n^{\delta} \n A_n x\n^{1-\delta} \\
& \leq 2^{2-\delta}(1+2K)^{2-\delta} n^{1-\delta} \n x \n.
\end{align*}
It follows that for $\delta \in [0,1)$ we have:
\begin{align*}
\n R(\lambda_0:A)-R(\lambda_0:A_n) \n_{\calL(X^{A_n}_{\delta-1},X)} & \leq 2^{2-\delta}(1+2K)^{4-\delta} n^{-\delta}.
\end{align*}\par
We are now ready to apply Theorem \ref{t:app}. First of all observe that by Lemma \ref{l:fracpowYosida} we have that
$F:[0,T]\times X\rightarrow X_{\theta_F}^{A_n}$ is Lipschitz continuous and of linear growth for all $n\geq 2\omega$
with Lipschitz and growth constants independent
of $n$, and $G:[0,T]\times X\rightarrow \gamma(H,X_{\theta_G}^{A_n})$ is $L^2_{\gamma}$-Lipschitz continuous and of
linear growth for all $n\geq
2\omega$ with Lipschitz and growth constants independent of $n$. Also, $1+D(A,A_n)$ is uniformly bounded in $n$.\par
Fix $\eta\in [0,1]$ such that $\eta<\min\{\frac{3}{2}-\inv{\tau}+\theta_F,\frac{1}{2}-\inv{p}+\theta_G \}$ and suppose $y_0\in
L^p(\Omega,\mathcal{F}_0;X_{\eta}^{A})$. It follows from Theorem \ref{t:app} with $\delta=\eta$, but with $A_n$ playing
the role of $A$ and $A$ playing the role of $A_0$, that:
\begin{align*}
\n U - U^{(n)}\n_{\Vpc{\alpha}([0,T]\times \Omega;X)} &\lesssim  \n x_0 - y_0\n_{L^p(\Omega;X)}
+ n^{-\eta}(1+\n y_0 \n_{L^p(\Omega;X_{\eta}^{A})}),
\end{align*}
with implied constants independent of $n$, $x_0$ and $y_0$.
\end{proof}

\end{document}